	\theoremstyle{plain}
		\newtheorem{mainthm}{\textsc{Theorem}}		
				\newtheorem{thm}{Theorem}[section]	
						\newtheorem{cor}[thm]{Corollary}	
				\newtheorem{lem}[thm]{Lemma}		
						\newtheorem{prop}[thm]{Proposition}
	\theoremstyle{definition}
		\newtheorem{defn}[thm]{Definition}	
					\theoremstyle{remark}
		\newtheorem{rem}[thm]{Remark}		
				\newtheorem{note}[thm]{Notation}		
				\numberwithin{equation}{section}
\newcommand{\trasp}[1]{{#1}^\mathsf{T}}	
\newcommand{\R}{\mathbb{R}}		
\newcommand{\ZZ}{\mathbf{Z}}
\newcommand{\C}{\mathbf{C}}		% the complex plane
\newcommand{\U}{\mathbf{U}}		% gruppo unitario
\newcommand{\MP}{\mathcal{P}}
\newcommand{\SO}{\mathrm{SO}}
\newcommand{\Sp}{\mathrm{Sp}}
\newcommand{\Gr}{\mathrm{Gr}}
\newcommand{\Mprod}[2]{ \left\langle {#1},{#2} \right\rangle_M}		% prod
\newcommand{\N}{\mathbb{N}}		% the natural numbers
\newcommand{\iCLM}{\mu^{\scriptscriptstyle{\mathrm{CLM}}}}
\DeclareMathOperator{\sgn}{sgn}		% signature
\renewcommand{\=}{\coloneqq}			% definisce :=
\newcommand{\email}[1]{\href{mailto:#1}{\textsf{#1}}}
\newcommand{\Id}{I}
\title{Morse index of circular solutions for attractive central force problems on
surfaces}
\author{Stefano Baranzini, Alessandro Portaluri
\thanks{The
author is partially supported by Progetto di Ricerca GNAMPA - INdAM, codice CUP\_E55F22000270001 “Dinamica simbolica e soluzioni periodiche per problemi singolari della Meccanica Celeste”}, Ran Yang\thanks{The author is supported by the National Natural Science Foundation of China (No. 12001098), the Doctoral research start-up fund of East China University of Technology (No. DHBK2019204) and the State Scholarship Fund from the CSC.} }
\date{\today}
\date{\today}
\begin{document}
 \maketitle

\begin{abstract}
The classical theory of attractive central force problem on the standard (flat) Euclidean  plane $\R^2$ can be generalized to surfaces by reformulating the basic underlying physical principles by means of differential geometry. Attractive central force problems on state manifolds appear quite often and in several different context ranging from nonlinear control theory to mobile robotics, thermodynamics, artificial intelligence, signal transmission and processing and so on.  

The aim of the present paper is to analyze the variational properties of the circular periodic orbits in the case of attractive power-law potentials of the Riemannian distance on revolution's surfaces.  We compute the stability properties  and the Morse index by developing a suitable intersection index in the Lagrangian Grassmannian and symplectic context. 
\vskip0.2truecm
\noindent
\textbf{AMS Subject Classification: 58E10, 53C22, 53D12, 58J30.}
\vskip0.1truecm
\noindent
\textbf{Keywords:} Conformal surfaces, circular orbits,  Morse index, Maslov index, Conley-Zehnder index, linear stability.
\end{abstract}

%%%%%%%%%%%%%%%%%%%%%%%%%%%%%%%%%%%%%%%%%%%%%%%%%%%%%%%%%%%%%%%%%%%%
%%
%%
%%
%%
%%
%%
%%%%%%%%%%%%%%%%%%%%%%%%%%%%%%%%%%%%%%%%%%%%%%%%%%%%%%%%%%%%%%%%%%%%

\tableofcontents

\section{Introduction, description of the problem and main results}\label{sec:intro}

Nonlinear dynamical systems on  manifolds appear quite often in
mathematical physics  They represent the natural mathematical framework for modeling  real phenomena in mobile robotics, mathematical optimization, thermodynamics and so on. 

 Newtonian mechanics on non-flat spaces can be  formulated in the language of Riemannian geometry.  Mechanical system  can be described as triple $(M,g,V)$ where $M$ is a smooth manifold representing the configuration space, $g$ the metric tensor (determining the kinetic energy) and $V$ the potential.  

 In Physics and Classical Mechanics, many interactions are modeled using potentials depending on the distance alone. Both when one studies systems of many particles interacting with each other or when one considers a single particle that interacts with a source. Thus, it seems natural to investigate systems on manifolds $M$ whose potential is a function of the distance from a point.

%%%%%%%%%%%%%%%%%%%%%%%%%%%%%%%%%%%%%%%%%%%%%%%%%%%%%%%%%%%%%%%%%%%%
%%
%%
%%
%%
%%
%%
%%%%%%%%%%%%%%%%%%%%%%%%%%%%%%%%%%%%%%%%%%%%%%%%%%%%%%%%%%%%%%%%%%%%

\subsection*{Historical comments}

In the standard  Euclidean space a particularly interesting attractive central force potential is provided by the gravitational Keplerian potential. In the last decades, several authors  provided a generalization of the gravitational Keplerian potential in the constant curvature case, starting with the well-known manuscript of Harin \& Kozlov \cite{HK92}.  One of the primary motivation of the aforementioned paper was   to find  classes of central force potentials on constant curvature   spaces for which the two fundamental properties of the gravitational central field still hold:
\begin{itemize}
\item[-] The potential is an harmonic function (for the 3-dimensional models)

\item[-] All bounded orbits are closed (Bertrand's theorem).  
\end{itemize}
Replacing the Laplace's equation by the Laplace-Beltrami equation, authors showed that on non-vanishing constant curvature case the gravitational potential energy, up to a constant,  has to be replaced in spherical coordinates by
\begin{itemize}
\item (Sphere case) 	$V(\theta)= \cot \theta$ where $\theta\in(-\pi/2, \pi/2)$ 
\item (Hyperbolic plane case) $V(\theta)= \coth \theta$ where $\theta\in(-\pi/2, \pi/2)$.
\end{itemize}
Moreover, in analogy with the Euclidean case, any solution is confined to a totally geodesic surface (either a sphere or a plane). 
In this respect we want to mention another interesting integrable generalization of the Kepler problem, nowadays known as the MICZ-Kepler system. The origin of this generalization can be traced back to McIntosh, Cisneros and Zwanziger in 1968. (Cfr. \cite{Mon13} and references therein for further details). 

By means of these cotangent and hyperbolic cotangent potentials, the classical gravitational $n$-body problem has been formulated in these new geometrical context. (We refer the interested reader to \cite{DPS12a, DPS12b} and references therein).  

A remarkable  variational result in the context of gravitational central force potential has been proved by W. Gordon in \cite{Gor77}   where is shown that periodic elliptic solutions of the Kepler problem are minimizers of the Lagrangian action functional. In particular, their Morse index is zero.   Recently, in \cite{DDZ19},  using the generalized Conley-Zehnder index and a classical  index theorem, authors generalized this result   to constant curvature surfaces and for the classes of cotangent potentials discussed above. 

Despite  these results and generalizations  to constant curvature surfaces, inspired by \cite{BK17} and motivated by the application in classical mechanics (cfr. for instance \cite{Fio17} and references therein), in the present paper  we are interested  in investigating the variational properties of circular orbits of an attractive central force problem in which the potential energy is a function of the Riemannian distance  from a point (notice that the cotangent potential has two singularities). Among all functions of the Riemannian distance, a natural class is provided by power-law function.

\subsection*{Description of the problem and main results}

 In the rest of the paper, we will work on the plane $\R^2$ with a Riemannian metric which is conformal to the flat one. Furthermore, we will assume that this conformal factor depends solely on the distance from a fixed point. This is equivalent to considering the induced Riemannian metric of revolution surfaces embedded  $\R^3$, as explained in  Proposition~\ref{thm:fava1}.   This symmetry will be crucial for us, it implies conservation of the angular momentum and existence of families of circular solutions. In the second part of the paper we will focus more specifically on the two constant curvature surface: the sphere or hyperbolic plane. 

 In polar coordinates $\xi,\theta$ and up to rescaling time and normalizing the radial variable, we  end up considering the  following  Lagrangian function 
\begin{equation}
L_\alpha(\xi, \vartheta,\dot \xi,\dot \vartheta)=\dfrac12 p(\xi)\big[\dot \xi^2+\xi^2\dot \vartheta^2\big]+q(\xi).
\end{equation} 
Here  $p$ denotes the conformal factor  and $q$ the potential. For the sphere and the hyperbolic plane with constant curvature metrics the conformal factors potentials we will consider are
\begin{itemize}
	\item (Sphere case)\begin{equation}
			\label{eq:def_coeff_sphere}
			p(\xi):= \dfrac{2}{(1+\xi^2)^{2}}, \qquad q(\xi):= \arctan^\alpha \xi. 
	\end{equation}
	\item (Hyperbolic plane case)
	\begin{equation}
		\label{eq:def_coeff_hyp}
		p(\xi):= \dfrac{2}{(1-\xi^2)^{2}}, \qquad q(\xi):=\ln^\alpha\left(\dfrac{1+\xi}{1-\xi}\right). 
\end{equation}
\end{itemize}
(Here $\alpha$ is non-vanishing and $\xi$ is non-negative). $T$-periodic solutions  of the associated Euler-Lagrange equation can be seen as critical points of the Lagrangian action function 
\[
A(x)=\int_0^T L\big(\xi(t), \vartheta(t), \xi'(t), \vartheta'(t)\big) \, dt
\]
where $x=(\xi, \vartheta)$,  $T>0$ denotes the prime period of the orbit and $A$ is defined on the Hilbert  space of the $H^1$  loops (of period $T$) in the punctured plane. We let $x$ be a $T$-periodic circular  orbit, i.e. a solution of the form $x(t) = (\xi_0, \theta_0 + t \omega)$ for $t \in [0,T/\omega]$. By linearizing the Euler-Lagrangian equation along such a circular solution, we can associate the following Hamiltonian system 
\begin{equation}\label{eq:linear-system-general-intro-0}
\dot z(t)=A (\xi_0) z(t)\qquad  t\in [0,T].
\end{equation} 
where $A(\xi_0)$ is the Hamiltonian matrix given by 
\[
A (\xi_0)=\begin{bmatrix}
0&b (\xi_0)&d(\xi_0)&0\\0&0&0&0\\a(\xi_0)&0&0&0\\0&c(\xi_0)&-b(\xi_0)&0
\end{bmatrix}
\]
$a, b, c, d $   are real functions depending on $\xi$  and $a$ is positive. Denoting by  $m^-(x)$ the Morse index of the critical point $x$, our first results reads as follows. 

\begin{mainthm}\label{thm:main1-intro}
 Let $x=\big(\xi_0, \vartheta(t)\big)$ be a circular $T$-periodic solution of the Euler-Lagrange equation.
 Then, the Morse index of $x$ is given by 
 \begin{equation}
m^-(x)=\begin{cases}
2k &\textrm{ if }\quad d<0 \textrm{ and }cd+b^2\ge  0\\[10pt]
2k+1 &\textrm{ if }\quad d<0 \textrm{ and } cd+b^2< 0\\[10pt]
0 &\textrm{ if }\quad \begin{cases}
d=0, \ b=0 \textrm{ and }c>0\\
d=0 \textrm{ and } b\neq 0\\ 
d>0 \textrm{ and } cd+b^2>0
\end{cases}
\end{cases}
\end{equation}
where $k\in\N$ is given by $k\cdot 2\pi<\sqrt{-ad (\xi_0)} \cdot T\le (k+1)\cdot 2\pi$.
\end{mainthm}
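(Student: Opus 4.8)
The plan is to reduce the computation to a Maslov-type intersection index for the explicit constant-coefficient Hamiltonian system $\dot z = A(\xi_0)z$ and then to evaluate that index by hand. First I would invoke the index theorem relating the Morse index $m^-(x)$ to the Maslov index of the path of Lagrangian subspaces generated by the linearized flow $\gamma(t)=e^{tA(\xi_0)}$ under periodic boundary conditions; since the coefficients are frozen along the circular orbit, everything is governed by the single matrix $A(\xi_0)$, and the problem becomes linear-algebraic.

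The next step is a spectral analysis of $A(\xi_0)$. Expanding the determinant along the all-zero second row one finds the characteristic polynomial $\lambda^2(\lambda^2-ad)$, so the eigenvalues are $0$ (with algebraic multiplicity two) and $\pm\sqrt{ad}$. Because $a>0$, the sign of $d$ dictates the qualitative behavior: for $d<0$ the nonzero eigenvalues form the purely imaginary pair $\pm i\sqrt{-ad}$ and the flow is oscillatory with frequency $\omega_0=\sqrt{-ad}$; for $d>0$ they are real and the flow is hyperbolic; for $d=0$ the whole matrix is nilpotent. This is precisely where the quantity $\sqrt{-ad(\xi_0)}$ in the statement originates, and the integer $k$ simply counts how many full turns $\omega_0 t$ completes on $[0,T]$.

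I would then write down $e^{tA(\xi_0)}$ explicitly, splitting it into the nilpotent block coming from the double zero eigenvalue---which reflects the conservation of angular momentum along the cyclic coordinate $\vartheta$ and produces only a shear (linear-in-$t$) drift---and the elliptic (resp. hyperbolic) block coming from $\pm\sqrt{ad}$. Counting the instants $t\in(0,T]$ at which the graph of $\gamma(t)$ meets the diagonal, the elliptic block contributes $2$ for each completed rotation, hence $2k$; the remaining contribution, together with the behavior at the endpoint when $\omega_0 T$ is an exact multiple of $2\pi$, is governed by the crossing form on the degenerate directions. A direct computation identifies the relevant block with $\begin{bmatrix} b & d \\ c & -b \end{bmatrix}$, whose determinant is $-(cd+b^2)$; thus $\sgn(cd+b^2)$ is exactly the signature of the crossing form. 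This accounts for the dichotomy between $2k$ and $2k+1$ when $d<0$, and, in the non-oscillatory regimes $d\ge 0$, for the vanishing of the index under the stated sign conditions (where $a>0$ keeps the elliptic block absent, while $cd+b^2>0$, or the listed conditions on $b,c$, keep the degenerate block positive).

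The main obstacle is the degeneracy created by the conserved angular momentum: the linearized flow always carries a double zero eigenvalue, so the standard nondegenerate Conley--Zehnder machinery does not apply directly and one must control the crossing form on the nilpotent block. Getting the sign bookkeeping of that crossing form correct---it is precisely $\sgn(cd+b^2)$---together with the endpoint convention encoded in the half-open inequality $k\cdot 2\pi<\sqrt{-ad}\,T\le(k+1)\cdot 2\pi$, is the delicate point; everything else reduces to exponentiating an explicit $4\times 4$ matrix.
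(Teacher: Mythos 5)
Your proposal follows essentially the same route as the paper: it invokes the index theorem $m^-(x)=\iota_1(\gamma)$, performs the case split on the sign of $d$ via the characteristic polynomial $\lambda^2(\lambda^2-ad)$, and decomposes the flow into an elliptic/hyperbolic block (producing the $2k$ count from $\sqrt{-ad}\,T$) plus a degenerate block whose contribution is governed by the sign of $cd+b^2$. The only places where the paper is more careful are in fixing the sign of the shear parameter $s_0$ of the nilpotent normal form through the symplectic normalization $\omega(e_4,v)=1$ (rather than reading it off the determinant of the $2\times 2$ submatrix) and in treating the $d=0$ case via Zhu's graph formula, but your identification of $\sgn(cd+b^2)$ as the quantity controlling the degenerate crossing is exactly what those computations yield.
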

The proof of this result  will be given in Section~\ref{sec:general-index}. As an application, in Section~\ref{sec:computations of maslov index}, we provide a precise  study of the index in the case of constant curvature surfaces. Here you can find a pictorial version of these results and we refer to Section~\ref{sec:computations of maslov index} for the precise statements and proof. 
\begin{mainthm}{\bf (Sphere Case)}
Let $p$ and $q$ be as in \eqref{eq:def_coeff_sphere} and let $x$ be a circular solution. The Morse index varies between $0$ and $3$ as displayed in the following figure 
\begin{figure}[ht]
     \centering
     \begin{subfigure}[b]{0.4\textwidth}
         \centering
         \includegraphics[scale=0.15]{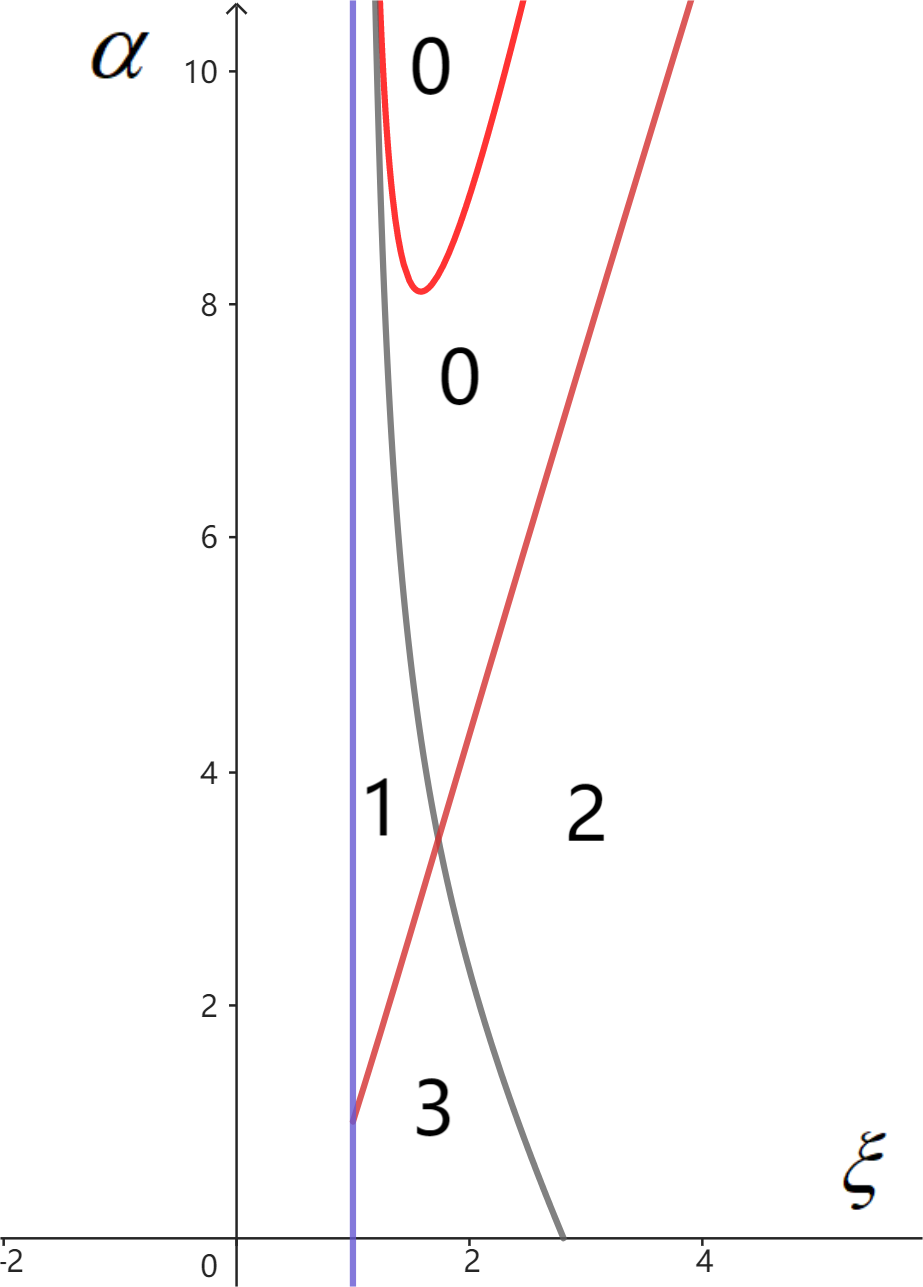}
         \caption{{\bf (Sphere case $\alpha$ positive)} In this figure are displayed the subregions of the $\widehat{\xi O\alpha}$-region $\Omega_1:=(1,+\infty)\times (0, +\infty)$ labeled by the Morse index of the corresponding circular orbit} 
	\label{fig:sphere-positive-alpha-intro-positive}
     \end{subfigure}
     \qquad
     \begin{subfigure}[b]{0.4\textwidth}
         \centering
        \includegraphics[scale=0.15]{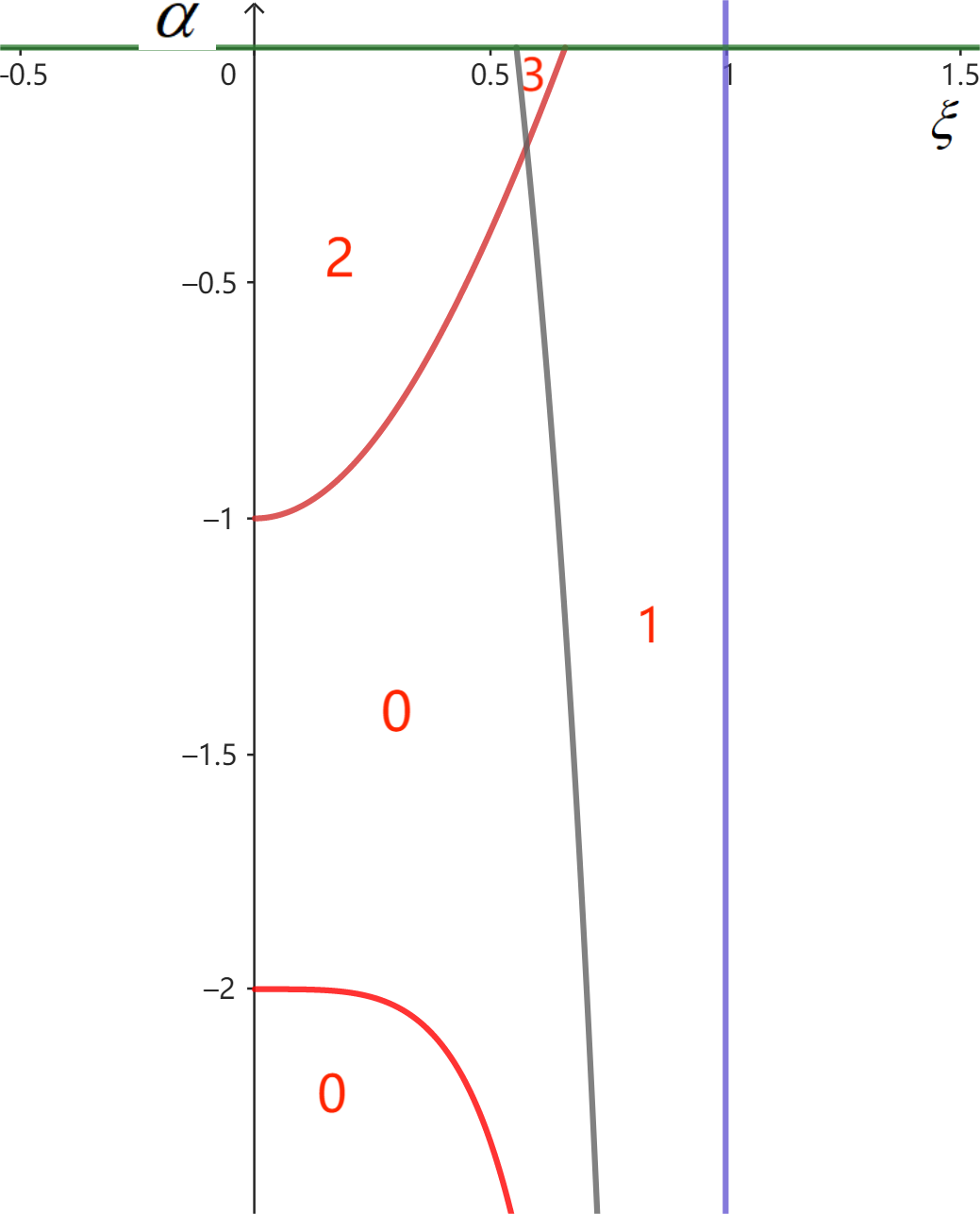}
         \caption{{\bf (Sphere case $\alpha$ negative)} In this figure are displayed the subregions of the $\widehat{\xi O\alpha}$-region $\Omega_2:=(1,+\infty)\times (-\infty, 0)$ labeled by the Morse index of the corresponding circular orbit} 
	\label{fig:sphere-negative-alpha-intro}
     \end{subfigure}
  \end{figure}
  \end{mainthm}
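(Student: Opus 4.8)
The plan is to specialize the general Morse index formula of Theorem~\ref{thm:main1-intro} to the sphere by inserting the explicit data \eqref{eq:def_coeff_sphere}, namely $p(\xi)=2/(1+\xi^2)^2$ and $q(\xi)=\arctan^\alpha\xi$. First I would write out the coefficients $a(\xi_0),b(\xi_0),c(\xi_0),d(\xi_0)$ of the Hamiltonian matrix $A(\xi_0)$ as explicit elementary functions of $\xi_0$ and $\alpha$; since $p$ and $q$ are elementary, the linearization leading to \eqref{eq:linear-system-general-intro-0} produces closed-form expressions. The existence of a circular orbit at radius $\xi_0$ is not free: the radial Euler--Lagrange equation imposes an algebraic balance relating the angular velocity $\omega$, the radius $\xi_0$ and the exponent $\alpha$, which I would solve for $\omega^2$ and substitute back into the coefficients. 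This constraint is precisely what restricts the admissible parameters to $\Omega_1=(1,+\infty)\times(0,+\infty)$ when $\alpha>0$ and to $\Omega_2=(1,+\infty)\times(-\infty,0)$ when $\alpha<0$, fixing the two domains appearing in the figure.

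Next I would compute the prime period. For a circular solution $T$ is tied to $\omega$ through the geometry, so after substitution the quantity $\sqrt{-a\,d(\xi_0)}\cdot T$ that governs the integer $k$ in Theorem~\ref{thm:main1-intro} becomes an explicit function of $(\xi_0,\alpha)$ on each domain. The decisive qualitative step is to prove that on the admissible region this quantity never exceeds $4\pi$, so that $k\in\{0,1\}$; the level set $\sqrt{-a\,d}\cdot T=2\pi$ then furnishes one of the curves separating the labelled subregions, distinguishing $k=0$ from $k=1$. In parallel I would determine the sign of $d(\xi_0)$ on each domain: where $d<0$ the index equals $2k$ or $2k+1$, whereas where $d\ge 0$ (with the accompanying sign conditions) it collapses to $0$, exactly reproducing the three cases of the general theorem.

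The remaining ingredient is the discriminant-type quantity $cd+b^2$, whose sign, combined with that of $d$, decides the parity correction $+1$ and thus separates the even-index subregions from the odd ones. Writing $cd+b^2$ in closed form and studying the zero set of the transcendental equation $cd+b^2=0$ as a curve in the $(\xi_0,\alpha)$-plane yields the last boundary in each picture. Collating the sign of $d$, the threshold $\sqrt{-a\,d}\cdot T=2\pi$, and the zero set of $cd+b^2$ partitions $\Omega_1$ and $\Omega_2$ into the displayed subregions, on each of which the formula of Theorem~\ref{thm:main1-intro} evaluates to one of the values $0,1,2,3$; checking that each value is actually attained confirms that the Morse index varies between $0$ and $3$.

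The hard part, as I see it, lies in the two transcendental estimates: establishing the uniform bound $\sqrt{-a\,d}\cdot T<4\pi$ and locating the zero set of $cd+b^2$. Both expressions depend on $\xi_0$ through $\arctan\xi_0$ and on $\alpha$ as an exponent, so explicit solving is hopeless and I expect to rely on monotonicity and sign arguments (studying derivatives in $\xi_0$ for fixed $\alpha$, and the asymptotics as $\xi_0\to 1^+$ and $\xi_0\to+\infty$) to control these curves. Once their qualitative shape is pinned down, the labelling of the subregions by the Morse index is a direct case analysis via Theorem~\ref{thm:main1-intro}.
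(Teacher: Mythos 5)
Your proposal follows essentially the same route as the paper: the authors compute $a,b,c,d$ in closed form from \eqref{eq:derivatives-p-and-q-sphere}, read off the sign of $d$ from $f_1(\xi)=(3\xi^4-2\xi^2+3)\arctan\xi+(\alpha-1)\xi(1-\xi^2)$, the sign of $cd+b^2$ from a second transcendental function $f_2$, and show by an elementary sign estimate that $\sqrt{-ad}\,T=2\pi f_3(\xi)\le 4\pi$, so $k\in\{0,1\}$ and the level set $f_3=1$ together with the zero sets of $f_1$ and $f_2$ cut out exactly the labelled subregions. One small correction: for $\alpha<0$ the positivity of $\dot\vartheta_0^2$ forces $\xi_0\in(0,1)$, so the second admissible region is $(0,1)\times(-\infty,0)$ (the $(1,+\infty)$ in the figure caption is a typo reproduced in your write-up), not $(1,+\infty)\times(-\infty,0)$.
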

  
  \begin{mainthm} {\bf (Hyperbolic plane Case)}
 Let $p$ and $q$ be as in \eqref{eq:def_coeff_hyp}. For any $n \in \mathbb{N}$ there exists a circular solution $x_n$ with Morse index equal to $2n$.
In the following figure are displayed the first index jump regions.
\begin{figure}[ht]
	\centering
	\includegraphics[scale=0.10]{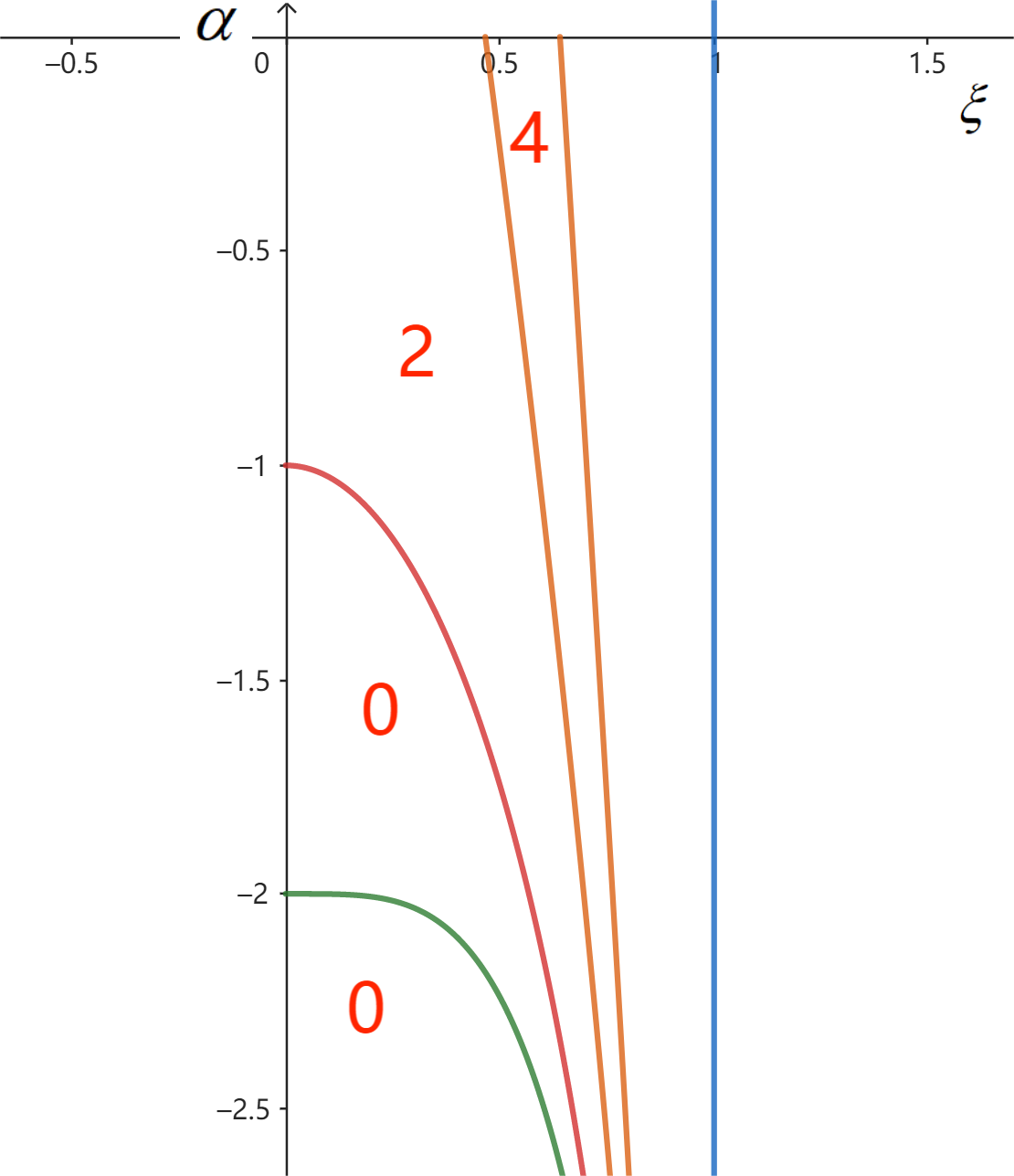}
		\caption{{\bf (Hyperbolic plane case)} In the figure we represent the subregions of the $\Omega_3$ corresponding to the jumps of the Morse index.}
	\label{fig:hyper-negative-alpha-intro}
\end{figure}  
\end{mainthm}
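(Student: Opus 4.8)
The plan is to turn the existence statement into a continuity-and-surjectivity argument for the phase quantity that governs the index in Theorem~\ref{thm:main1-intro}. That theorem tells us that a circular solution $x=(\xi_0,\vartheta(t))$ for which $d(\xi_0)<0$ and $c(\xi_0)d(\xi_0)+b(\xi_0)^2\ge 0$ has Morse index $2k$, where $k\in\N$ is fixed by $k\cdot 2\pi<\sqrt{-a(\xi_0)d(\xi_0)}\,T(\xi_0)\le (k+1)\cdot 2\pi$. Hence it is enough to exhibit, for each $n\in\N$, one circular solution along which the two sign conditions hold and for which
\[
N(\xi_0)\=\frac{\sqrt{-a(\xi_0)\,d(\xi_0)}\;T(\xi_0)}{2\pi}
\]
falls in the window $(n,n+1]$: such a solution has $k=n$, hence Morse index exactly $2n$.

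First I would make the family of circular orbits explicit. Setting $\dot\xi\equiv 0$ and $\dot\vartheta\equiv\omega$ in the Euler--Lagrange equations of $L_\alpha$ produces the algebraic balance
\[
\tfrac12\,p'(\xi_0)\,\xi_0^2\,\omega^2+p(\xi_0)\,\xi_0\,\omega^2+q'(\xi_0)=0,
\]
which, with $p,q$ as in \eqref{eq:def_coeff_hyp}, determines the angular velocity $\omega=\omega(\xi_0)$ and thus the prime period $T(\xi_0)$ as explicit elementary functions of the radius $\xi_0\in(0,1)$. A short computation gives $\omega(\xi_0)^2=-\alpha\,(1-\xi_0^2)^2\,\ln^{\alpha-1}\!\big(\tfrac{1+\xi_0}{1-\xi_0}\big)\,/\,\big(\xi_0(1+\xi_0^2)\big)$, so that circular orbits exist precisely for $\alpha<0$, which fixes the regime of Figure~\ref{fig:hyper-negative-alpha-intro}. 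Substituting $\omega(\xi_0)$ and \eqref{eq:def_coeff_hyp} into the entries $a,b,c,d$ of the Hamiltonian matrix in \eqref{eq:linear-system-general-intro-0} renders $N$ an explicit function of $\xi_0$.

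Next I would verify the two sign conditions along this family. Since $a>0$ always, it remains to check that $d(\xi_0)<0$ and $c(\xi_0)d(\xi_0)+b(\xi_0)^2\ge 0$ on the admissible range of $\xi_0$; this is exactly the description of the even-index region $\Omega_3$ in Figure~\ref{fig:hyper-negative-alpha-intro}, and reduces to a finite sign analysis of the explicit expressions obtained above.

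The core of the proof, and the step I expect to be the main obstacle, is the asymptotic analysis of $N$. I would show that $N$ is continuous on the admissible interval and, crucially, unbounded. As $\xi_0\to 1^-$ the hyperbolic conformal factor $2/(1-\xi_0^2)^2$ degenerates (the boundary sits at infinite Riemannian distance) and one checks $\omega(\xi_0)\to 0$, so the prime period $T(\xi_0)\to+\infty$; at the inner end $\xi_0\to 0^+$ one finds instead $T(\xi_0)\to 0$, so $N$ starts small. The delicate point is to control the competition inside the product $\sqrt{-ad}\,T$ near $\xi_0=1$ and to confirm that $\sqrt{-a(\xi_0)d(\xi_0)}$ does not decay fast enough to offset the blow-up of $T$, so that indeed $N(\xi_0)\to+\infty$. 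Granting continuity together with this divergence, the intermediate value theorem shows that $N$ attains every value in $(0,+\infty)$; in particular, for each $n\in\N$ there is a radius $\xi_0^{(n)}$ with $N(\xi_0^{(n)})\in(n,n+1]$, and, the sign conditions of the previous step persisting out to the boundary, Theorem~\ref{thm:main1-intro} yields a circular solution $x_n$ of Morse index exactly $2n$.
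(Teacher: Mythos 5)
Your proposal follows essentially the same route as the paper: the authors compute $a,b,c,d$ explicitly from \eqref{eq:a-b-c-d-hyper}, observe that for $\alpha<0$ one has $cd+b^2>0$ throughout the region where $d<0$, and introduce exactly your quantity $N$ as $g_3(\xi)=\sqrt{-ad}\,T/(2\pi)$ in \eqref{eq:g-3-function}, concluding from $\lim_{\xi\to 1^-}g_3(\xi)=+\infty$ and continuity that every window $(k,k+1]$ is attained. The asymptotic step you flag as the main obstacle is in fact immediate, since $g_3(\xi)^2\sim 2(1-\xi)^{-2}$ as $\xi\to 1^-$, which is how the paper disposes of it.
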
 

We remark that in the Euclidean case we recover the results about the computation of the Morse index as well as an instability result for these class of solutions, already proved in \cite{BJP16} and \cite{KOP21}. 
\begin{mainthm}\label{pro:index-of-euclidean-intro}
	Let $p(\xi)=1$, $q(\xi)= \xi^\alpha$  and let $x$ be a circular solution. Then, the Morse index of $x$  is given by
	\begin{equation}
	m^-(x)=\begin{cases}
	0 &\textrm{ if }\quad \alpha\in(-\infty,-1]\\
	2  &\textrm{ if}\quad \alpha\in (-1,0)
	\end{cases}.
	\end{equation}	  
\end{mainthm}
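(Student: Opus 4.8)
The plan is to obtain Theorem~\ref{pro:index-of-euclidean-intro} as a direct specialization of the general index formula of Theorem~\ref{thm:main1-intro}. First I would linearize the Euler--Lagrange equation along the circular orbit $x(t)=(\xi_0,\theta_0+\omega t)$ for the flat choice $p\equiv1$ and $q(\xi)=\xi^\alpha$, so as to compute the entries $a,b,c,d$ of the Hamiltonian matrix $A(\xi_0)$ displayed in the Introduction. The circular-orbit condition is read off from the radial equation at $\xi\equiv\xi_0$, giving $\omega^2=-\alpha\,\xi_0^{\alpha-2}$; this forces $\alpha<0$ (so the hypothesis of the statement is precisely the range in which circular solutions exist) and the prime period satisfies $\omega T=2\pi$.

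The heart of the argument is the evaluation of the three quantities that enter Theorem~\ref{thm:main1-intro}. Writing $L_{\xi\xi}\coloneqq\partial_\xi^2 L|_x=\alpha(\alpha-2)\,\xi_0^{\alpha-2}$ for the radial Hessian along the orbit, I would compute $-a\,d(\xi_0)=4\omega^2-L_{\xi\xi}=-\alpha(\alpha+2)\,\xi_0^{\alpha-2}$, the square of the epicyclic frequency, together with $\sgn d=-\sgn(\alpha+2)$. The decisive observation is that the radial dependence cancels when one forms the quantity under the radical in Theorem~\ref{thm:main1-intro}:
\[
\sqrt{-a\,d(\xi_0)}\cdot T=\sqrt{-\alpha(\alpha+2)}\,\xi_0^{\frac{\alpha-2}{2}}\cdot\frac{2\pi}{\sqrt{-\alpha}\,\xi_0^{\frac{\alpha-2}{2}}}=2\pi\sqrt{\alpha+2}.
\]
Hence $k$ is determined, independently of $\xi_0$, by $k<\sqrt{\alpha+2}\le k+1$. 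Finally I would check the sign of $cd+b^2$ and show that it is governed by $L_{\xi\xi}=\alpha(\alpha-2)\xi_0^{\alpha-2}$, which is strictly positive for every $\alpha<0$; thus $cd+b^2>0$ throughout.

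With these facts the case analysis of Theorem~\ref{thm:main1-intro} collapses. For $\alpha\in(-2,0)$ we have $d<0$ and $cd+b^2>0$, so $m^-(x)=2k$; since $\sqrt{\alpha+2}\in(1,\sqrt2)$ for $\alpha\in(-1,0)$ this yields $k=1$ and index $2$, whereas $\sqrt{\alpha+2}\le1$ for $\alpha\in(-2,-1]$ yields $k=0$ and index $0$ (the boundary value $\alpha=-1$, where $\sqrt{-ad}\,T=2\pi$ exactly, sits in the $k=0$ bracket). For $\alpha=-2$ one has $d=0$ but $b\neq0$ (it is a nonzero multiple of $\omega$), and for $\alpha<-2$ one has $d>0$ with $cd+b^2>0$; both fall into the index-zero branches of Theorem~\ref{thm:main1-intro}. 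Collecting the cases gives $m^-(x)=2$ for $\alpha\in(-1,0)$ and $m^-(x)=0$ for $\alpha\in(-\infty,-1]$.

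The step I expect to be most delicate is pinning down the sign of $cd+b^2$: selecting the even branch $2k$ rather than the odd branch $2k+1$ cannot be decided from the spectrum of $A(\xi_0)$ alone (its eigenvalues $0,0,\pm\sqrt{ad}$ only detect the invariant $ad$), but requires the full normal-form coefficients. Showing that this quantity is controlled by $L_{\xi\xi}>0$, and therefore never triggers the parity shift, is exactly what forces every Euclidean circular orbit to have even Morse index, so that the only transition is the radius-independent jump at $\alpha=-1$. As a consistency check, these values reproduce the Morse-index computation and the instability of such orbits obtained in \cite{BJP16,KOP21}.
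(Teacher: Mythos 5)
Your proposal is correct and follows essentially the same route as the paper: compute the coefficients $a,b,c,d$ for $p\equiv 1$, $q=\xi^\alpha$ (obtaining $d=\alpha(\alpha+2)\xi_0^{\alpha-2}$, $cd+b^2=\alpha(\alpha-2)\xi_0^{\alpha-4}>0$ and $\sqrt{-ad}\,T=2\pi\sqrt{\alpha+2}$), then read off the index from the general theorem via the sign of $d$ and the value of $k$. The only difference is cosmetic --- you package the computation in terms of $\omega$ and the radial Hessian $L_{\xi\xi}$ rather than the $p_0,q_0,\eta_0$ bookkeeping of the paper, but the quantities and the case analysis are identical.
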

\begin{rem}
The case $\alpha=-1$ corresponds to the classical singular gravitational potential. In this case, we have  
	\begin{equation}
	\iota_1(\gamma)=0
	\end{equation} 
and the corresponding circular orbit is  linearly unstable.	
\end{rem}

%%%%%%%%%%%%%%%%%%%%%%%%%%%%%%%%%%%%%%%%%%%%%%%%%%%%%%%%%%%%%%%%%%%%
%%
%%
%%
%%
%%
%%
%%%%%%%%%%%%%%%%%%%%%%%%%%%%%%%%%%%%%%%%%%%%%%%%%%%%%%%%%%%%%%%%%%%%

\section{Surfaces of revolutions, radial functions and geodesics}

  Central potentials are extremely important in physics. It is well known that a particle moving in the Euclidean space  and subject to a central force is confined to move in the plane spanned by its initial position and velocity. From the mathematical point of view, this means that the system is completely integrable and the angular momentum is a conserved quantity. It is thus natural to look for systems (and geometries) in which these symmetries are still present. A big class of $2D$ examples is given by revolution surfaces. They posses a natural $\mathbb{S}^1$ symmetry and a corresponding $\mathbb{S}^1$ action by isometry. We will consider potentials invariant with respect to it, i.e. depending solely on the distance from the fixed points of this action.

%%%%%%%%%%%%%%%%%%%%%%%%%%%%%%%%%%%%%%%%%%%%%%%%%%%%%%%%%%%%%%%%%%%%
%%
%%
%%
%%
%%
%%
%%%%%%%%%%%%%%%%%%%%%%%%%%%%%%%%%%%%%%%%%%%%%%%%%%%%%%%%%%%%%%%%%%%%

\subsection{Radially conformal flat metrics on the plane}

In the Euclidean three dimensional space $(\mathbb E^3, g_e)$ equipped with a Cartesian reference frame, we denote a  point $P$  by the vector $(x_0, x_1, x_2)\in \R^3$. We let $\ell_i $ to be the straight line corresponding to the $i$-th coordinate and we denote by $\Pi_{i,j}$ the plane spanned by $\ell_i$ and $\ell_j$ for $i,j =0,1,2$.  

Given a smooth curve $\gamma:[-1,1] \to \mathbb{R}^3$, we assume 
\begin{enumerate}
		\item[(C1)] $\gamma$ is simple, namely $\gamma(t) = \gamma (s)$ iff $t=s$ and regular meaning that $\dot \gamma(t) \ne 0$, for every $t \in [-1,1]$
		\item[(C2)] $\gamma([-1,1])$ is contained in the $\Pi_{0,2}$-plane whose ends are two distinct points on the $\ell_2$-axis 
		\item[(C3)] $\dot \gamma_+(-1),\  \dot\gamma_-(1)$ belongs to $\Pi_{0,1}$-plane  where $\dot \gamma_+$ (resp. $\dot \gamma_-$) denotes the right (resp. left) derivative
		\end{enumerate}		
Because of (C2), in local coordinates $\gamma$ is pointwise defined by  $\gamma(t)=\big(x_0(t), 0 , x_2(t)\big)$ for $t \in [-1,1]$.
We set 
\begin{multline}
\Gamma:= \Set{\gamma \in \mathscr C^\infty([-1,1], \mathbb E^3)|(C1)-(C4) \textrm{ hold}} \quad \textrm{ and } 
\Sigma:=\Set{M_\gamma:=G\cdot \gamma| \gamma \in \Gamma}\\ \textrm{ where } $G$ \textrm{ denotes the group of isometries of $\mathbb E^3$ fixing the line $\ell_2$}.
\end{multline}
By the properties of $\gamma$, it  follows that $M_\gamma$ is diffeomorphic to the $2$-dimensional sphere  $\mathbb S^2$ and because of (C3), we get that $M_\gamma$ is a smooth surface. We refer to the curve $\gamma$ as the {\em generating curve} of $M_\gamma$ and the circles described by the points of $\gamma$ correspond to the {\em parallels} of $M_\gamma$.  We denote by $\widehat M_\gamma:=M_\gamma\setminus(\ell_2\cap M_\gamma)$. Denoting by $A$ the degenerate skew-symmetric matrix corresponding to the infinitesimal generator of the group $G$, given by $A=\begin{bmatrix}
0&-1&0\\1&0&0\\0&0&0	
\end{bmatrix}
$,
we get a surjective diffeomorphism between the open cylinder $\mathbb{S}^1\times (-1,1) $ and  $\widehat M_\gamma$ defined by 
\begin{equation*}
\phi :\mathbb{S}^1\times (-1,1)  \ni (t,\exp{(A\,\theta)}) \longmapsto \exp{(A\,\theta)}\, \gamma(t)\in \widehat M_\gamma.
\end{equation*}
By means of this diffeomorphism we define a Riemannian metric $\widehat g_\gamma$ on $\widehat M_\gamma$ by  pulling back the Euclidean Riemannian metric on  $\mathbb{E}^3$ to $\widehat M_\gamma$.
By an explicit calculations, we get that 
\begin{multline}
g_e(\exp{(A\,\theta)} \dot \gamma,\exp{(A\,\theta)} \dot \gamma) = |\dot \gamma|^2, \qquad   g_e(A\exp{(A\,\theta)} \gamma, A\exp{(A\,\theta)} \gamma) = g_e(A\gamma,A\gamma)=x_0(t)^2\\ g_e(\exp{(A\,\theta)} \dot \gamma,A\exp{(A\,\theta)} \gamma) = g_e(\dot \gamma, A\gamma)=0.
\end{multline}
As consequence of this discussion, we get that  the Riemannian metric $\widehat g_\gamma$ on $\widehat M_\gamma$  in these new cylindrical coordinates, is given by
\begin{equation*}
	\widehat g_\gamma = \phi^*g_e =x_0(t)^2d \theta^2+ \vert \dot\gamma\vert^2 dt^2
\end{equation*}
where, as usually  $\phi^*$ denotes the pull-back. 
\begin{prop}\label{thm:fava1}
There exists a radial function $r\mapsto a(r)$ on $\mathbb{R}^2 \setminus \{(0,0)\}$ such that  $( \widehat M_\gamma,  \widehat g_\gamma)$ is isometric to $(\R^2, a^2(r) g_e)$. 
\end{prop}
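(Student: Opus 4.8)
The plan is to realize the isometry explicitly through a radial reparametrization of the axial coordinate, which automatically preserves the rotational symmetry. By the computation preceding the statement, the diffeomorphism $\phi$ identifies $\widehat M_\gamma$ with the cylinder $\mathbb{S}^1\times(-1,1)$ carrying the metric $|\dot\gamma(t)|^2\,dt^2+x_0(t)^2\,d\theta^2$. On the target side, writing the flat metric in polar coordinates as $g_e=dr^2+r^2\,d\theta^2$, the radially conformal metric reads $a^2(r)\,g_e=a^2(r)\,dr^2+a^2(r)\,r^2\,d\theta^2$. I therefore look for a map of the form $(t,\theta)\mapsto(r(t),\theta)$, that is, I keep the angular variable fixed and change only the radial one.

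Comparing the two expressions coefficient by coefficient, the $d\theta^2$-terms force $a(r)\,r=x_0(t)$ and the $dt^2$-terms force $a(r)\,r'(t)=|\dot\gamma(t)|$, where I take positive roots since $x_0$, $|\dot\gamma|$ and $r$ are positive on the interior (the poles $x_0=0$ having been removed to form $\widehat M_\gamma$). Eliminating $a$ yields the separable ODE $r'(t)/r(t)=|\dot\gamma(t)|/x_0(t)$, solved by $r(t)=\exp\!\big(\int_{t_0}^t |\dot\gamma(s)|/x_0(s)\,ds\big)$, which is smooth on $(-1,1)$. Setting $a(r):=x_0(t(r))/r$ then produces a positive function depending on $r$ alone, hence radial, and by construction the pullback of $a^2(r)\,g_e$ along $(t,\theta)\mapsto(r(t),\theta)$ equals $\widehat g_\gamma$.

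It remains to check that $t\mapsto r(t)$ is a diffeomorphism of $(-1,1)$ onto $(0,+\infty)$, so that the composite $\Psi=(r,\theta)\circ\phi^{-1}$ is a genuine diffeomorphism of $\widehat M_\gamma$ onto the punctured plane $\R^2\setminus\{(0,0)\}$; this asymptotic analysis near the poles is the only delicate point. Strict monotonicity is immediate from $r'=r\,|\dot\gamma|/x_0>0$. For the limits at $t\to\pm1$ I invoke the hypotheses on $\gamma$: regularity (C1) gives $\dot\gamma\neq0$, while (C3) forces the tangent vector at each pole to be horizontal, i.e. $\dot x_2(\pm1)=0$, so that $\dot x_0(\pm1)\neq0$ and $x_0(t)\sim|\dot x_0(\pm1)|\,|t\mp1|$ while $|\dot\gamma|$ stays bounded away from zero. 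Hence the integrand behaves like $1/|t\mp1|$ near each endpoint, its integral diverges logarithmically to $-\infty$ as $t\to-1^+$ and to $+\infty$ as $t\to1^-$, and correspondingly $r\to0^+$ and $r\to+\infty$. Thus $r$ is onto $(0,+\infty)$ with nonvanishing derivative, hence a diffeomorphism, and $\Psi$ is the desired isometry. I expect the coefficient matching and the solution of the ODE to be routine, with the pole asymptotics being the main obstacle to a fully rigorous global statement.
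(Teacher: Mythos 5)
Your proof is correct, and it reaches the conclusion by a more direct route than the paper. The paper first transports the cylinder $\mathbb{S}^1\times(-1,1)$ to the punctured plane through the inverse stereographic projection of the round sphere composed with polar coordinates, obtaining the fixed change of variable $t=(\rho^2-1)/(\rho^2+1)$, and then—since the pulled-back metric is not conformal for a general generating curve—reparametrizes $\gamma$ by a function $\lambda$ determined by an ODE so as to force conformality. You instead keep $\gamma$ as given and solve directly for the radial coordinate: matching the $dt^2$ and $d\theta^2$ coefficients yields $r'/r=|\dot\gamma|/x_0$, the classical isothermal-coordinate equation for a surface of revolution, and then $a(r)=x_0(t(r))/r$. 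The two constructions are dual---both amount to integrating $\sqrt{E/G}$ for the cylinder metric $E\,dt^2+G\,d\theta^2$, one solving for $t\mapsto r$ with $\gamma$ fixed, the other fixing $t(\rho)$ and solving for the reparametrization of $\gamma$---but yours is more self-contained (no detour through the sphere) and, importantly, you verify a point the paper leaves implicit: that $t\mapsto r(t)$ maps $(-1,1)$ \emph{onto} $(0,+\infty)$, which requires the logarithmic divergence of $\int|\dot\gamma|/x_0$ at the endpoints, guaranteed by (C1) and (C3) via $x_0(t)\sim|\dot x_0(\pm1)|\,|t\mp1|$. That asymptotic check is a genuine addition; without it one only gets an isometry onto an annulus rather than onto the whole punctured plane.
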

\begin{proof}
We start by observing that, repeating verbatim the previous discussion once  replaced $M_\gamma$ by  $\mathbb S^2$, we can construct the  diffeomorphism from the open cylinder $\mathbb{S}^1\times (-1,1) $ to $\widehat {\mathbb S^2}:=\mathbb S^2\setminus\{N,S\}$ where $N,S$ denote the north and the south poles respectively. Namely
\begin{equation*}
\phi :\mathbb{S}^1\times (-1,1)  \ni (t,\exp{(A\,\theta)}) \longmapsto \exp{(A\,\theta)}\, \widetilde \gamma(t)\in\mathbb S^2,
\end{equation*}
where $\widetilde \gamma$ is the curve defined by 
\begin{equation}
	\label{eq:profile_curve_sphere}
	\widetilde \gamma : [-1,1] \to \mathbb{R}^3, \quad \tilde \gamma(t) = \left(\sqrt{1-t^2},0,t\right).
\end{equation}
Denote by $\widehat {\mathbb R^2}:=\R^2\setminus\{(0,0)\}$ and let $\psi$ be  the inverse  of the stereographic projection from the north pole of $\mathbb S^2$. It is given explicitly by: 
\begin{equation*}
	\psi: \widehat {\mathbb R^2} \ni (u,v) \longmapsto \left(\dfrac{2u}{1+u^2+v^2},\dfrac{2v}{1+u^2+v^2},\dfrac{u^2+v^2-1}{1+u^2+v^2}\right) \in\widehat{\mathbb{S}^2}.
\end{equation*}
The map  $\widetilde \phi^{-1} \circ \psi: \R^2 \to (-1,1) \times \mathbb S^1$ with respect to the local system of coordinates $(u,v)$ is given by 
\[
\widetilde \phi^{-1}\circ \psi (u,v) = (t(u,v),\theta(u,v)) =  \left(\dfrac{u^2+v^2-1}{1+u^2+v^2}, \arctan\left(\dfrac{u}{v}\right)\right).  
\]
If $\sigma: \widehat {\mathbb R^2}\to\widehat {\mathbb R^2}$ is the polar change of coordinates defined by 
\[
\sigma (\rho, \alpha)=\big(\sqrt{u^2+v^2}, \arctan(u/v)\big)
\]
and denoting by $F$ the diffeomorphism given by 
\[
F= \widetilde \phi^{-1}\circ \psi\circ \sigma: \widehat {\mathbb R^2} \ni (\rho, \alpha) \longmapsto  (t(\rho,\alpha),\theta(\rho,\alpha))=\left(\dfrac{\rho^2-1}{1+\rho^2},\alpha\right) \in \mathbb{S}^1\times (-1,1)
\]
since $g_\gamma= \phi^* g_e$, we finally get 
\[
F^* \widehat g_\gamma = \left( \dfrac{4\rho^2}{(1+\rho^2)^2}\right)^2 \vert \dot \gamma(t(\rho)) \vert^2 d\rho^2+ x_0(t(\rho))^2 d \alpha^2.
\]
 To conclude the proof of the first claim, we introduce a suitable time rescaling along the curve $\gamma$.

Let $\lambda$ be a reparametrization of the time variable $t$ and let us denote by $'$ the new time variable $s$. Thus we get that  
\begin{multline}
	\widehat g_\gamma = |\gamma'(\lambda(s))|^2\lambda'(s)^2 ds^2+ x_0(\lambda(s))^2 d\theta^2\quad \textrm{ and} \\
	F^*\widehat g_\gamma =\dfrac{x_0(\lambda(s))^2}{\rho^2}\left[\left[\dfrac{4\rho^2}{(1+\rho^2)^2}\right]^2\dfrac{\lambda'^2|\gamma'(\lambda(s))|^2}{x_0(\lambda(s))^2} d\rho^2+ \rho^2d\alpha^2\right].
\end{multline}
Thus,  imposing that $F^*g_\gamma$ is conformal to the flat metric, uniquely determines  $\lambda$. For, we let 
\begin{equation*}
\lambda' (s(\rho)) = \left(\dfrac{1+\rho^2}{2 \rho}\right)^2\dfrac{x_0(\lambda(s(\rho)))}{\vert\gamma'(\lambda(s(\rho)))\vert} \quad \textrm{ where } \quad s(\rho) = \dfrac{\rho^2-1}{\rho^2+1}.
\end{equation*}
With such a choice it readily follows that 
\[
F^*\widehat g_\gamma =\dfrac{x(\lambda(s))^2}{\rho^2} \big(d\rho^2 + \rho^2 d\alpha^2\big).
\]
which concludes the proof.
\end{proof}

%%%%%%%%%%%%%%%%%%%%%%%%%%%%%%%%%%%%%%%%%%%%%%%%%%%%%%%%%%%%%%%%%%%%
%%
%%
%%
%%
%%
%%
%%%%%%%%%%%%%%%%%%%%%%%%%%%%%%%%%%%%%%%%%%%%%%%%%%%%%%%%%%%%%%%%%%%%

\subsection{Meridians and geodesics}

 On a surface of revolution, there are two families of distinguished geodesics: parallels and meridians. The first ones are orbits of the $\mathbb{S}^1$ action whereas the second ones are all the geodesics issuing from fixed points. The next Proposition shows this and provides and integral expression for the Riemannian distance.
\begin{prop}\label{thm:fava2}
	Let su consider $(\mathbb{R}^2,g)$ where 
	\begin{equation*}
			g := a^2(r)g_e, \text{ and } \quad a(r)>0
	\end{equation*}
    and let $X$ be the radial vector field given by $X:= a(r)^{-1}\partial_r$. Then we have 
	\[
	\nabla_XX =0
	\]
	and its integral curves are unit speed geodesics through the origin.  Moreover, solutions starting from the origin are minimizing geodesics. The (Riemannian) distance  of a point $x$ from origin is given by:
	\begin{equation}
		\label{eq:riemannian_distance}
		d_a(0,x) = \int_0^{|x|} {a(t) }dt 
	\end{equation}
	where $| \cdot |$ denotes the Euclidean norm.
		\end{prop}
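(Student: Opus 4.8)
The plan is to compute everything in polar coordinates $(r,\theta)$ on $\R^2\setminus\{0\}$, where $g_e = dr^2 + r^2\,d\theta^2$ and hence $g = a(r)^2\big(dr^2 + r^2\,d\theta^2\big)$. First I would record that $X$ is unit speed for $g$: since $g(\partial_r,\partial_r) = a(r)^2$, one immediately gets
\[
g(X,X) = a(r)^{-2}\,g(\partial_r,\partial_r) = 1.
\]
In these coordinates an integral curve of $X = a(r)^{-1}\partial_r$ solves $\dot r = a(r)^{-1}$, $\dot\theta = 0$, so it is a ray in a fixed angular direction, merely reparametrized; as $r\to 0^+$ it limits to the origin. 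Thus the integral curves are the rays issuing from $0$, and once I know $\nabla_X X=0$ they will be unit-speed geodesics through the origin.

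The core computation is $\nabla_X X = 0$. The slick route is the conformal change-of-connection formula: writing $g = e^{2\varphi} g_e$ with $\varphi = \log a$, the Levi-Civita connections are related by
\[
\nabla_Y Z = \nabla^e_Y Z + (Y\varphi)\,Z + (Z\varphi)\,Y - g_e(Y,Z)\,\operatorname{grad}_e\varphi,
\]
where $\nabla^e$ and $\operatorname{grad}_e$ are taken with respect to the flat metric. Since $\varphi$ is radial one has $\operatorname{grad}_e\varphi = \varphi'(r)\partial_r$, and since straight rays are flat geodesics one has $\nabla^e_{\partial_r}\partial_r = 0$. Substituting $Y=Z=X=a^{-1}\partial_r$, each correction term is a multiple of $\partial_r$, and they combine as $(-1+2-1)\,a'(r)\,a(r)^{-3}\partial_r = 0$, giving $\nabla_X X = 0$. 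Along an integral curve this reads $\nabla_{\dot\gamma}\dot\gamma = 0$, which is precisely the geodesic equation. As a conceptual alternative worth mentioning, the reflection $(r,\theta)\mapsto(r,2\theta_0-\theta)$ is an isometry of $g$ (it fixes $r$ and leaves $d\theta^2$ invariant) whose fixed-point set is the ray $\{\theta=\theta_0\}$; since fixed-point sets of isometries are totally geodesic, the rays are geodesics without any local computation.

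Finally, for the distance formula and the minimizing property, I would take any piecewise-$C^1$ curve $c(s) = \big(r(s),\theta(s)\big)$ joining $0$ to a point $x$ with $|x| = R$ and estimate
\[
L(c) = \int a(r)\sqrt{\dot r^2 + r^2\dot\theta^2}\,ds \ \ge\ \int a(r)\,|\dot r|\,ds \ \ge\ \int a(r)\,\dot r\,ds \ =\ \int_0^R a(t)\,dt,
\]
with equality exactly when $\dot\theta\equiv 0$ and $\dot r \ge 0$, i.e.\ when $c$ is the monotone radial ray. Hence the radial geodesic through $x$ is the unique minimizer, and its length, computed from $ds = a(r)\,dr$ along the integral curve, equals $\int_0^{R} a(t)\,dt$; this proves simultaneously that solutions starting from the origin are minimizing and that $d_a(0,x) = \int_0^{|x|} a(t)\,dt$.

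The main obstacle I expect is the coordinate degeneracy at the origin. One must justify that a radial ray extends to a genuine smooth geodesic through $0$ — which follows because in Cartesian coordinates it is a straight segment and the metric is smooth there — and one must make the minimization argument robust for competitor curves that need not be radial, may fail to be monotone in $r$, or may pass through the origin. The inequality chain above handles this once the curve is reparametrized so that $r$ sweeps from $0$ to $R$, but the careful bookkeeping at the non-smooth point $r=0$ and the control of the sign of $\dot r$ are where the technical care is needed.
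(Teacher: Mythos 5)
Your proposal is correct and follows essentially the same route as the paper: the conformal change-of-connection formula with $\varphi=\log a$ to get $\nabla_XX=0$ (your $(-1+2-1)a'a^{-3}\partial_r$ bookkeeping matches the paper's term-by-term computation), and the pointwise estimate $|\dot c|_g\ge a(r)|\dot r|$ in polar coordinates for the minimization and the distance formula. Your extra remarks — the reflection-isometry argument for radial geodesics and the explicit care about non-monotone $r$ via $\int a(r)\dot r\,ds=\int_0^R a(t)\,dt$ — are sound refinements of, not departures from, the paper's argument.
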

	\begin{proof}
		Since $\vert X\vert_g =1$, its integral curves are unit speed curves. To check the geodesic condition we have to compute $\nabla_X X$. Standard differential formulas involving  conformal changes of the type $g = e^{2\log a} g_e$, of the metric yield:
		\begin{equation*}
			\nabla_X X = \nabla^e_XX+ 2 g_e(\nabla \log(a),X)X-g_e(X,X)\nabla \log(a)
		\end{equation*}
	where $\nabla^e$ denotes the Levi-Civita connection with respect to the Euclidean metric. A direct computation yields:
	\begin{equation*}
		\nabla^e_XX = a^{-2}(r)\nabla_{\partial_r} \partial_r +a^{-1}(\partial_r a^{-1}(r) )\partial_r = - (\partial_r a )a^{-3} \partial_r.
	\end{equation*}
Since $\nabla \log(a) = a^{-1} \partial_r a$ and $g_e(X,X) = a^{-2}$. Summing up the above computations, we get  $\nabla_X X =0$. By this we immediately get that $\gamma$ is a  unit speed geodesic.

Now, we prove that the straightlines from the origin are minimizing. Let $\eta$ be any curve joining $0$ to a point $x$ then, in coordinates $\theta$ and $r$ we can write $\eta = r(t) e^{i\theta(t)}$. It follows that 
\[
\vert\dot\eta\vert_g =a(r) \sqrt{\dot{r}+r^2\dot \theta}\ge a(r) \vert \dot 
r \vert. 
\]
On the other hand, if $x= r_0 e^{i\theta_0}$, the curve $\tilde \eta(t) = r(t)e^{i\theta_0}$ satisfies the same boundary conditions but $\ell(\eta) \ge \ell(\tilde\eta)$. It follows that minimizing geodesic from $0$ to the point $x$ are straight lines. 

Take an integral line $\eta$ of $X$ joining $0$ to a point $x$. Since $X$ gives arclength parametrized geodesics and since along a straight geodesic starting at the origin, we get that $|\eta|_g= a(r) \dot r=1$ we have by integrating over $[0,T]$:
\begin{equation*}
a(r) \dot{r} = 1 \Rightarrow d(0,x) = T = \int_0^Ta(r)\dot{r} dt \underset{[s =r(t)]}{=}\int_0^{r(T)}a(s)ds = \int_0^{|x|}a(s)ds.
\end{equation*}
\end{proof}

One direct consequence of Proposition~\ref{thm:fava2}, is that we can, in principle,  rewrite any  radial function defined in $\R^2$ as a function of the Riemannian distance by looking at   $r$ as function of $d(0,x)$ through Equation~\eqref{eq:riemannian_distance}. Although, in general, it is not possible to get a closed form for $r$ as a function of the Riemannian distance, in the cases we are interested in, corresponding to  constant curvature surfaces, this is possible and will be the starting point of the next section.

%%%%%%%%%%%%%%%%%%%%%%%%%%%%%%%%%%%%%%%%%%%%%%%%%%%%%%%%%%%%%%%%%%%%
%%
%%
%%
%%
%%
%%
%%%%%%%%%%%%%%%%%%%%%%%%%%%%%%%%%%%%%%%%%%%%%%%%%%%%%%%%%%%%%%%%%%%%

\subsection{Central force problem on constant curvature surfaces}

We start by considering the configuration space  $(\R^2, g)$, equipped with polar coordinates $(r, \vartheta)$, where $g$ is a conformally flat metric and we denote by $\mathbb S^2_R$  (resp. $\mathbb H^2_R$) the sphere (resp. the pseudo-sphere) of radius $R$. The conformal factor is given by 
\[
\mu_R(r):= \begin{cases}
	\dfrac{2R^2}{R^2 + r^2} & \textrm{ for  $\mathbb S^2_R$ }\\[7pt]
	\dfrac{2R^2}{R^2 - r^2} & \textrm{ for  $\mathbb H^2_R$  } \\[5pt]
\end{cases}
\] 
In terms of the curvature $\kappa$, the conformal factor can be written at once as
\[
\mu_R(r)= \dfrac{2}{1+\kappa r^2} \quad \textrm{ where } \quad \kappa= \begin{cases}
 	1/R^2 & \textrm{ for the sphere}\\
 	-1/R^2 & \textrm{ for the pseudosphere}
 \end{cases}
\]
We now take the origin as the center of the central force and we consider the simple mechanical system $(M, g, V)$ where $M:=\R^2\setminus \{(0,0)\}$ and $V:M\to \R$ is a power law potential energy (independent on $\vartheta$) depending only on the Riemannian distance from the origin. By taking into account  Proposition~\ref{thm:fava2} and by a direct integration of the conformal factor, we get that the distance of the point $P(r, \vartheta)$ to the origin is 
\[
d_R(r)=\begin{cases}
	2 R \arctan(r/R) & \textrm{ for } \quad \mathbb S^2_R\\[7pt]
	R \ln\left(\dfrac{R+r}{R-r}\right)& \textrm{ for }\quad \mathbb H^2_R
	\end{cases}
\]
Given  $\alpha \in \R^*$, we let 
\[
V_\alpha: M \to \R \quad \textrm{ defined by } \quad V_\alpha(r, \vartheta)= - m\, [d_R(r)]^\alpha \qquad m \in (0,+\infty)
\]
and we consider the Lagrangian $\widetilde L_\alpha$ of the mechanical system $(M,g,V_\alpha)$ on the state space $TM$ given by 
\[
\widetilde  L_\alpha(r,\vartheta, v_r, v_\vartheta)=\dfrac12 \mu_R^2(r)(v_r^2+ r^2 v_\vartheta^2) - V_\alpha (r, \vartheta)
\]
By setting $\xi:=r/R$, then 
the Lagrangian function can be written,  as follows
 \[
\widetilde  L_\alpha(\xi,\vartheta, v_\xi, v_\vartheta):= \begin{cases}
 	\dfrac{ 2 R^2}{(1+\xi^2)^2}\Big[v_\xi^2+ \xi^2  v_\vartheta^2\Big]+   m\big[2R\arctan \xi]^\alpha & \textrm{ for } \mathbb S^2_R\\[12pt]
 	\dfrac{ 2R^2}{(1-\xi^2)^2}\Big[v_\xi^2+ \xi^2  v_\vartheta^2\Big]
 	+m\left[R\ln\left(\dfrac{1+\xi}{1-\xi}\right)\right]^\alpha& \textrm{ for } \mathbb H^2_R.
 \end{cases}
\] 
Now, computing $\widetilde L_\alpha$ along a smooth curve and rescaling time by setting 
\[
t :=\begin{cases}
	\big(m\, 2^{\alpha{-1}}\,R^{\alpha-2}\big)^{-1/2} \tau & \textrm{ in the case of } \mathbb S^2_R \\[12pt]
	\left(m\,{2^{-1}}\, R^{\alpha-2}\right)^{-1/2}\tau & \textrm{ in the case of } \mathbb H^2_R
\end{cases}
\]
by abusing notation and  denoting  by $\cdot $ the $\tau$ derivative as well, we get that $ \widetilde L_\alpha= C_\alpha\,L_\alpha$ where 
\[
 L_\alpha(\xi,\vartheta,  \dot \xi, \dot \vartheta):= \begin{cases}
 	\dfrac{ 1}{(1+\xi^2)^2}\Big[\dot \xi^2+ \xi^2 \dot  \vartheta^2\Big]+   \arctan^\alpha\xi & \textrm{ in the case of } \mathbb S^2_R\\[12pt]
 	\dfrac{ 1}{(1-\xi^2)^2}\Big[\dot \xi^2+ \xi^2  \dot \vartheta^2\Big]+    \ln^\alpha\left(\dfrac{1+\xi}{1-\xi}\right)& \textrm{ in the case of } \mathbb H^2_R
 \end{cases}
\] 
and $C_\alpha:=m R^\alpha 2^\alpha$ (resp. $C:=m R^\alpha$) in the case of  $\mathbb S_R^2$ (resp. $\mathbb H_R^2$).

%%%%%%%%%%%%%%%%%%%%%%%%%%%%%%%%%%%%%%%%%%%%%%%%%%%%%%%%%%%%%%%%%%%%
%%
%%
%%
%%
%%
%%
%%%%%%%%%%%%%%%%%%%%%%%%%%%%%%%%%%%%%%%%%%%%%%%%%%%%%%%%%%%%%%%%%%%%

\subsection{Euler-Lagrange equation and Sturm-Liouville problem}

We let $p_\pm(\xi):= 2(1\pm\xi^2)^{-2}$,  $q_+(\xi):= \arctan^\alpha \xi$ and
 $q_-(\xi):=\ln^\alpha\left(\frac{1+\xi}{1-\xi}\right)$.
 \begin{note}
In shorthand notation, with abuse of notation, we  use $p(\xi)$ for denoting either $p_+(\xi)$ or $p_-(\xi)$ and  $q(\xi)$ for denoting either  $q_+(\xi)$ or $q_-(\xi)$. Furthermore, we denote by $p'$ (resp. $q'$) the $\xi$ derivative of $p$ (resp. $q$).
\end{note}
Bearing this notation in mind and once observed that  without loss of generality we can consider the Lagrangian $L_\alpha$ instead of $\widetilde L_\alpha$, given by 
\begin{equation}
L_\alpha(\xi, \vartheta,\dot \xi,\dot \vartheta)=\dfrac12 p(\xi)\big[\dot \xi^2+\xi^2\dot \vartheta^2\big]+q(\xi).
\end{equation} 
and by a direct calculation we get that  the associated  Euler-Lagrangian equation is 
\begin{equation}\label{eq:e-l-equation-general}
\begin{cases}
	\dfrac{d}{d\tau}(p\,\dot\xi)=\dfrac12 p'(\dot \xi^2+\xi^2\dot \vartheta^2)+p\, \xi\dot \vartheta^2+\, q'\qquad \textrm{ on } [0,T]\\[10pt]
\dfrac{d}{d\tau}(p\,\xi^2\dot \vartheta)=0.
\end{cases}
\end{equation}
A special class of solutions of the Euler-Lagrange equation, is provided by the circular solutions pointwise defined by $\gamma_0(t)=\big(\xi_0, \vartheta(t)\big)$, where by the first equation of \eqref{eq:e-l-equation-general}, we immediately get
\begin{equation}\label{eq:dot-theta-squar}
\dot \vartheta^2=\dfrac{-2q'}{(p\,\xi^2)'}\Big\vert_{\xi=\xi_0}.
\end{equation}
\begin{note}\label{notation}
	We set 
	\begin{align}\label{eq:simplfied-notations-p-q}
&p_0\=p(\xi_0),&& p'_0\= p'(\xi_0),&& p''_0\=p''(\xi_0)\\
&q_0\=q(\xi_0),&& q'_0\= q'(\xi_0),&& q''_0\=q''(\xi_0)\\
&\eta_0\=p_0\,\xi_0^2 && \eta_0'\=(p_0\,\xi_0^2)'=p'_0\xi_0^2+ 2p_0\,\xi_0 && \dot\vartheta_0^2=-2q_0'\,{\eta'}_0^{-1}.
\end{align}
\end{note}
So, the period $T$ is given by 
\begin{equation}\label{eq:T-star}
T=2\pi \,\omega_0\quad  \textrm{ where } \quad \omega_0:=\left.\sqrt{\dfrac{\eta'}{-2q'}}\right\vert_{\xi=\xi_0}
\end{equation}
By linearizing along $\gamma_0$ we get the Sturm-Liouville equation given by 
\[
-\dfrac{d}{d\tau}\big(P \dot y + Q y\big)+ \trasp{Q}\dot y + R y=0 \quad \textrm{ on } \quad [0,T]
\]
where 
\begin{multline}
P=\begin{bmatrix}
p_0&0\\0&\eta_0
\end{bmatrix},
 \qquad Q=\begin{bmatrix}
0&0\\\zeta_0&0
\end{bmatrix} \quad \textrm{ where  } 
\quad \zeta_0:=\begin{cases}
\sqrt{-2q_0'\,\eta_0'},&\quad \textrm{ if }\quad \eta'_0\ge  0\\[7pt]
-\sqrt{-2q_0'\,\eta_0'},&\quad \textrm{ if }\quad \eta'_0<0
\end{cases} \textrm{ and finally}\\[7pt]
R=\begin{bmatrix}
R_{11}&0\\0&0
\end{bmatrix}\quad \textrm{ for  } \quad R_{11}:=\eta_0' \cdot \left[\dfrac{q_0'}{\eta_0'}\right]'
\end{multline}
We set  $B=\begin{bmatrix}
P^{-1}&-P^{-1}Q\\-\trasp{Q}P^{-1}&\trasp{Q}P^{-1}Q-R
\end{bmatrix}$. By a direct computation, we get 
\begin{multline}
	P^{-1}=\begin{bmatrix}
p^{-1}_0&0\\0&\eta_0^{-1}
\end{bmatrix} \qquad P^{-1}Q=\begin{bmatrix}
0&0\\\eta_0^{-1}\cdot \zeta_0&0
\end{bmatrix}\\[10pt]
\trasp{Q}P^{-1}Q=\begin{bmatrix}
\eta_0^{-1}\cdot \zeta_0^2&0\\0&0
\end{bmatrix}\qquad \trasp{Q}P^{-1}Q-R=\begin{bmatrix}
\eta_0^{-1}\cdot \zeta_0^2-\eta_0'\cdot(q_0'/\eta_0')'&0\\0&0
\end{bmatrix}.
\end{multline}
We observe that 
\begin{multline}
\eta_0^{-1}\,\zeta_0^2= -2\,q_0'\,\big(\ln\eta_0\big)'\qquad  
\eta_0^{-1}\,\zeta_0= \sqrt{2\, q_0'(\eta_0^{-1})'}\qquad \textrm{ and }\\
\eta_0^{-1}\,\zeta_0^2- R_{11}=-2q_0'\,(\ln \eta_0)'-\eta_0'\left(\dfrac{q_0'}{\eta_0'}\right)'
\end{multline}
In conclusion, we get 
\begin{multline}
B=\begin{bmatrix}
p_0^{-1}&0&0&0\\0&\eta_0^{-1}&-\eta_0^{-1}\cdot \zeta_0&0\\ 0&-\eta_0^{-1}\cdot \zeta_0&-2q_0'\,(\ln \eta_0)'-\eta_0'\left(\dfrac{q_0'}{\eta_0'}\right)'&0\\0&0&0&0
\end{bmatrix} \textrm{ and }\\
JB=
\begin{bmatrix}
0&\eta_0^{-1}\cdot \zeta_0&2q_0'\,(\ln \eta_0)'+\eta_0'\left(\dfrac{q_0'}{\eta_0'}\right)'&0\\
0&0&0&0\\
p_0^{-1}&0&0&0\\
0&\eta_0^{-1}&-\eta_0^{-1}\cdot \zeta_0&0
\end{bmatrix}
\end{multline}
where $J:=\begin{bmatrix} 0 & -\Id_2\\ \Id_2 & 0\end{bmatrix}$ denotes the standard complex structure.

%%%%%%%%%%%%%%%%%%%%%%%%%%%%%%%%%%%%%%%%%%%%%%%%%%%%%%%%%%%%%%%%%%%%
%%
%%
%%
%%
%%
%%
%%%%%%%%%%%%%%%%%%%%%%%%%%%%%%%%%%%%%%%%%%%%%%%%%%%%%%%%%%%%%%%%%%%%

\section{The generalized Conley-Zehnder index  and normal forms}\label{sec:general-index}

This section is devoted to provide a complete proof of the following result. 

\begin{thm}\label{thm:main1}
Let  $a, b, c, d \in \R$, $a$ positive, we let  
\[
A=\begin{bmatrix}
0&b&d&0\\0&0&0&0\\a&0&0&0\\0&c&-b&0
\end{bmatrix}
\]
 and we denote by $\gamma$  the fundamental solution of the following linear Hamiltonian system 
\begin{equation}\label{eq:linear-system-general-intro}
\dot z(t)=A z(t)\qquad  t\in [0,T].
\end{equation}
Then the generalized Conley-Zehnder index $\iota_1(\gamma(t), t\in[0,T])$	 is given by
\begin{equation}
\iota_1(\gamma(t), t\in[0,T])=\begin{cases}
2k &\textrm{ if }\quad d<0 \textrm{ and }cd+b^2\ge  0\\[10pt]
2k+1 &\textrm{ if }\quad d<0 \textrm{ and } cd+b^2< 0\\[10pt]
0 &\textrm{ if }\quad \begin{cases}
d=0, \ b=0 \textrm{ and }c>0\\
d=0 \textrm{ and } b\neq 0\\ 
d>0 \textrm{ and } cd+b^2>0
\end{cases}\\[20pt]
-1 &\textrm{if }\quad \begin{cases}
d=0, \ b=0 \textrm{ and }c\le0\\ 
d>0 \textrm{ and } cd+b^2\le 0
\end{cases}
\end{cases}
\end{equation}
where $k\in\N$ is given by $k\cdot 2\pi<\sqrt{-ad}\cdot T\le (k+1)\cdot 2\pi$.
\end{thm}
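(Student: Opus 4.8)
The plan is to integrate the system \eqref{eq:linear-system-general-intro} explicitly, recast $\iota_1(\gamma)$ as a crossing index of a Lagrangian path against the diagonal, and evaluate the crossing forms case by case. I would first exploit the block structure of $A$: since its second row vanishes, $z_2$ is a constant of motion, say $z_2\equiv\beta$; the pair $(z_1,z_3)$ then obeys the forced second order equation $\ddot z_3-a d\,z_3=ab\beta$ with $z_1=\dot z_3/a$, and $z_4$ is recovered by the quadrature $\dot z_4=c\beta-b z_3$. This already separates the three regimes that shape the statement: for $d<0$ the pair $(z_1,z_3)$ oscillates with frequency $\omega\=\sqrt{-ad}$, for $d>0$ it is hyperbolic, and for $d=0$ it is a shear, while in every case $z_2$ and the cyclic direction $e_4$ carry a persistent eigenvalue $1$ of $\gamma(t)$.

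Next I would express $\iota_1(\gamma)$ through the Maslov index $\iCLM$ of the path $t\mapsto\operatorname{Gr}\gamma(t)$ in $\R^4\oplus\R^4$ (equipped with the difference symplectic form) relative to the diagonal $\Delta$, using the standard relation between the generalized Conley--Zehnder index and the $\iCLM$ index, so that it becomes a signed count of crossings. A crossing at $t_0$ is a time with $\ker\big(\gamma(t_0)-\Id\big)\neq 0$, and on this kernel the (first order) crossing form is, up to the usual sign, the quadratic form $v\mapsto\langle Bv,v\rangle$ attached to the symmetric matrix $B\=-JA$, whose only nonzero entries are $B_{11}=a$ and the $(z_2,z_3)$–block $\bigl[\begin{smallmatrix}c&-b\\-b&-d\end{smallmatrix}\bigr]$; equivalently it is twice the Hamiltonian $H(v)=\tfrac12\big(a v_1^2+c v_2^2-2b v_2 v_3-d v_3^2\big)$. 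Using the explicit flow I would then locate the crossings. When $d<0$ the interior crossings occur exactly at $t_j=2\pi j/\omega$, their kernels contain the $(z_1,z_3)$–plane together with $e_4$, and there are precisely $k$ of them in $(0,T)$ for the integer $k$ fixed by $k\cdot 2\pi<\omega T\le(k+1)2\pi$ (the boundary case $\omega T=(k+1)2\pi$ corresponding to $t=T$ itself being a crossing). On each such kernel the crossing form restricts to $\tfrac12(a v_1^2-d v_3^2)$, which is positive definite since $a>0$ and $-d>0$, so each interior crossing contributes $+2$ and yields the leading term $2k$; for $d>0$ the oscillator is replaced by a hyperbolic block with no return to $1$, contributing nothing.

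It remains to resolve the degenerate directions, and this is where $cd+b^2$ enters and is the crux of the argument. The cyclic direction $e_4$ lies in the kernel at every crossing yet is annihilated by the first order crossing form, so I would pass to the next order: integrating $\dot z_4=c\beta-b z_3$ over one oscillator period shows that the $(z_2,z_4)$–motion reduces to a shear whose drift coefficient is proportional to $(cd+b^2)/d$, and the sign of this secular term decides whether the degenerate crossing carries an extra $+1$ (giving $2k+1$ when $cd+b^2<0$) or none (giving $2k$ when $cd+b^2\ge 0$). The same shear mechanism, read at the endpoints, governs the cases $d\ge 0$: at $t=0$ the crossing form is $H$ on all of $\R^4$, whose $(z_2,z_3)$–block has determinant $-(cd+b^2)$, so its signature—combined with the endpoint conventions of $\iCLM$—produces $0$ versus $-1$ according to $\sgn(cd+b^2)$, and, when $b=d=0$, according to $\sgn c$.

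The main obstacle I anticipate is precisely this degenerate and endpoint bookkeeping. Because $\gamma(T)$ always has $1$ as an eigenvalue (through $z_2$ and $e_4$), the path is non-generic at its endpoint and the first order crossing form is degenerate along the cyclic direction, so a naive signature count does not suffice; one must compute a second order (iterated) crossing form—equivalently, analyze the shear normal form of the reduced $(z_2,z_4)$ dynamics—and assemble it with the correct half-contributions at $t=0$ and $t=T$. The delicate point is to verify that all these endpoint terms combine into the stated integers and that the strict versus non-strict inequalities ($cd+b^2\ge 0$ versus $<0$, and $c>0$ versus $c\le 0$) fall on the correct side, while the boundary case $\omega T=(k+1)2\pi$ is absorbed consistently into the count $k$.
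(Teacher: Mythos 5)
Your route is genuinely different from the paper's: you propose to integrate the flow explicitly, pass to $\iCLM(\Delta,\Gr(\gamma))$ via Lemma~\ref{lem:relation between two indices}, and evaluate crossing forms, whereas the paper conjugates $A$ into a symplectic direct sum of $2\times 2$ normal forms and uses additivity of $\iota_1$ under the $\diamond$-product together with the known indices of the basic shear and rotation paths (plus Zhu's formula for the triangular case $d=0$). Unfortunately your route has a gap that the higher-order crossing forms you invoke cannot close. Since the fourth column of $A$ vanishes, $Ae_4=0$ and hence $\gamma(t)e_4=e_4$ for \emph{every} $t$; therefore $\dim\big(\Gr(\gamma(t))\cap\Delta\big)\ge 1$ at every instant and the path never leaves the singular cycle. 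There are no isolated crossings to ``locate at $t_j=2\pi j/\omega$'', the crossing formula \eqref{eq:maslov index formula} (which presupposes regular, hence isolated, crossings) is inapplicable to the unperturbed path, and the degeneracy along $e_4$ is not resolved by any finite-order iterated crossing form: because $(\gamma(t)-\Id)e_4\equiv 0$ identically, every derivative vanishes on that direction, so the partial-signature machinery --- designed for isolated, finitely degenerate crossings --- never terminates. The same obstruction contaminates your endpoint terms at $t=0$ and $t=T$, your claim that each interior crossing ``contributes $+2$'' (the kernel at $t_j$ is three-dimensional, containing $e_4$, so $av_1^2-dv_3^2$ is only positive semi-definite there, and four-dimensional when $cd+b^2=0$), and your assertion that the hyperbolic case $d>0$ contributes nothing.

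To repair this you must first isolate the cyclic direction: either perturb the path as in the very definitions of $\iota_1$ and $\iCLM$ (replace $\gamma$ by $e^{-\varepsilon J}\gamma$ and count transversal intersections, which is delicate to carry out by hand), or split off the degenerate block symplectically --- which is precisely what the paper's decomposition $A=S\big(N\diamond R\big)S^{-1}$ achieves, confining the problematic directions to a $2\times2$ shear $\begin{bmatrix}1&s_0t\\0&1\end{bmatrix}$ whose index is $-1$ or $0$ according to $\sgn s_0$, with $\sgn s_0=\sgn(cd+b^2)$ when $d<0$. Your secular-drift computation $z_4(t_j)-z_4(0)=t_j\beta(cd+b^2)/d$ does identify the correct quantity and is the informal shadow of the paper's determination of $s_0$, but as written it is a heuristic attached to a crossing-count that is not well defined.
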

\begin{rem}
We observe that in the cases we are interested in $c>0$. A direct consequence of this fact, as expected, is that $\iota_1(\gamma) \ge 0$. In fact, the last case can be easily excluded by observing that, since $c>0$ and assuming $d>0$ as required in the last case, the term $cd+b^2>0$.  
\end{rem}
Proof of Theorem~\ref{thm:main1}, will be done through  an explicit computation of the normal forms and of  the generalized Conley-Zehnder index for autonomous linear Hamiltonian  vector field of the form $JB$. We refer to the Appendix~\ref{sec:preliminary}  and references therein, for the basic definitions and properties of the generalized Conley-Zehnder index $\iota_1$, as well as the generalized Maslov index $\iCLM$. 

\begin{note}
	We let 
\begin{equation}\label{eq:notations-a-b-c-d}
a\=p_0^{-1},\quad  b\=\eta_0^{-1}\cdot \zeta_0,\quad  c\=\eta_0^{-1},\quad  d\=2q_0'\,(\ln \eta_0)'+\eta_0'\left(\dfrac{q_0'}{\eta_0'}\right)'.
\end{equation}
\end{note}
Bearing this notation in mind the linear autonomous Hamiltonian system $z'(t)=JBz(t)$ reads as 
\begin{equation}\label{eq:linear-system-general}
\dot z(t)=A z(t) \qquad t\in [0,T]
\end{equation}
where 
\begin{equation}\label{eq:matrix-A-general}
A=\begin{bmatrix}
0&b&d&0\\0&0&0&0\\a&0&0&0\\0&c&-b&0
\end{bmatrix}.
\end{equation}
By a direct computation of the Cayley-Hamilton polynomial, we get that 
\[
\det(A-\lambda \Id)=\lambda^2(\lambda^2-ad)=0\quad \Rightarrow \lambda=0\quad \textrm{ or }\quad  \lambda^2=ad
\]
So, we have to split our discussion according to the sign of $d$.

\subsection*{First case: $d$ negative} Since, we have $ad<0$, the eigenvalues of matrix $A$ are 
\begin{equation}
\lambda_1=\lambda_2=0,\qquad \lambda_3=\sqrt{-ad}\ i,\qquad \lambda_4=-\sqrt{-ad}\ i.
\end{equation} 
Since $0$ could have a non-trivial Jordan blocks, we discuss these two cases accordingly. 
\begin{itemize}
	\item {\bf Non-trivial Jordan block}. This corresponds to the case  $b^2+cd\neq 0$ since if this holds, then the  $0$ eigenvalue is not semisimple.
	
	In this case,  by the discussions in \cite[pag. 118-122]{CLW94}, there exists a symplectic matrix $S\in\Sp(4,\R)$ such that
	\begin{equation}
	A=S\Big(\begin{bmatrix}0&s_0\\0&0\end{bmatrix}\diamond \begin{bmatrix}
	0&-\sqrt{-ad}\\ \sqrt{-ad}&0
	\end{bmatrix}\Big)S^{-1},
	\end{equation}
	where $s_0$ is a constant that will be determined later. In this case the fundamental solution $\gamma$ of the system given in Equation~\eqref{eq:linear-system-general} splits as follows
	\begin{equation}\label{eq:splitting-of fundamental-solution-d<0}
	\gamma(t)=S(\gamma_1(t)\diamond \gamma_2(t))S^{-1},
	\end{equation}
	where $\gamma_1(t)\=\begin{bmatrix}1&s_0t\\0&1
	\end{bmatrix}$ and $\gamma_2(t)\=R(\sqrt{-ad}t)$ denotes  a rotation matrix. By  Lemma~\ref{lem:property of maslov-type index}, in order to compute $\iota_1(\gamma)$ we only need to compute $\iota_1(\gamma_1)$ and $\iota_1(\gamma_2)$ respectively.
	
	By Lemma \ref{lem:index of three special paths}, we have 
	\[
	\iota_1(\gamma_2)=1+2k \textrm{ where $k$ is such that } k\cdot 2\pi<\sqrt{-ad}\,T\le (k+1)\cdot 2\pi.
	\]
	 To compute the index $\iota_1(\gamma_1)$, we only need to determine the sign of $s_0$. We start by observing that $e_4=\trasp{(0,0,0,1)}$ belongs to the kernel of  $A$.  Now, we assume that $v$ is the solution of $Av= e_4$ and that  $\omega(e_4,v)=1$. Then we get that,
	\[
	v=\trasp{(0,-1,b/d, x_4)} \qquad \textrm{ for } x_4 \in \R.
	\]
We let $S^{-1}=[s_{m,n}]_{m,n=1}^4$ and $M$ be the phase flow map at time-$1$, namely 
\[
M=\begin{bmatrix}
	0&0&s_0&0\\ 0&0&0&-\sqrt{-ad}\\0&0&0&0\\0&\sqrt{-ad}&0&0
	\end{bmatrix}.
\] 
Since $A e_4=0$ and $S$ is invertible, then $MS^{-1}e_4=0$. By direct computations we have  $S^{-1}e_4=\trasp{(\alpha,0,0,0)}$ and $\alpha\neq0$. Let $S^{-1}v=\trasp{(w_1,w_2,w_3,w_4)}$. Using the equation  $Av =\xi e_4$, we get $S^{-1}v=\trasp{(v_1,0,\beta\alpha/s_0, 0)}$ where $\beta:=-(b^2+cd)/d$. Since
	$
	1=\omega(e_4,v)=\omega(S^{-1}e_4,S^{-1}v),
	$
	then we conclude that  $s_0=\beta \alpha^2$ and consequently the sign of $s_0$ is the same as the sign of $\beta$. 
	
	Recall that   $d<0$, then the sign of $s_0$ is the same as $cd+b^2$. Thus
	\begin{equation}
	\iota_1(\gamma_1)=\begin{cases}
	-1\quad &\textrm{ if } \quad cd+b^2>0\\
	0\quad &\textrm{ if } \quad cd+b^2<0.
	\end{cases}
	\end{equation}
	Consequently, by the additivity property of the $\iota_1$ index w.r.t. the symplectic product, we have
	\begin{equation}\label{eq:index-d-negative-and-zero-not-semisimple}
	\iota_1(\gamma)=	\iota_1(\gamma_1)+\iota_1(\gamma_2)=\begin{cases}
	2k,\quad &\text{if}\quad cd+b^2>0\\
	1+2k,\quad &\text{if}\quad cd+b^2<0\\
	\end{cases}
	\end{equation} 
	where $k$ is determined by
	\begin{equation}\label{eq:k-determined}
	k\cdot 2\pi<\sqrt{-ad}\cdot T\le (k+1)\cdot 2\pi.
	\end{equation}  	 
	
	\item {\bf Trivial Jordan blocks.} This corresponds to the case $b^2+cd= 0$, since if this holds, then eigenvalue $0$ is semisimple.	 
	
	 In this case,  by the discussions in \cite[pag. 118-122]{CLW94}, there exists a symplectic matrix $S\in\Sp(4,\R)$ such that
	\begin{equation}
	A=S\Big(\begin{bmatrix}0&0\\0&0\end{bmatrix}\diamond \begin{bmatrix}
	0&-\sqrt{-ad}\\ \sqrt{-ad}&0
	\end{bmatrix}\Big)S^{-1}.
	\end{equation}
	
	Let $\gamma(t)$ be the fundamental solution of system \eqref{eq:linear-system-general}, then we have
	\begin{equation}\label{eq:splitting-of fundamental-solution-d<0-simple}
	\gamma(t)=S(\gamma_1(t)\diamond \gamma_2(t))S^{-1},
	\end{equation}
	where $\gamma_1(t)=I$ and $\gamma_2(t)=R(\sqrt{-ad}t)$. 
	
	By Lemma \ref{lem:index of three special paths} we have 
	\[
	\iota_1(\gamma_1)=-1, \iota_1(\gamma_2)=1+2k \textrm{ where $k$ is given by } k\cdot 2\pi<\sqrt{-ad}\,T\le (k+1)\cdot 2\pi.
	\]
	 Then by Lemma \ref{lem:property of maslov-type index} we have
	\begin{equation}\label{eq:index-d-negative-and-zero-semisimple}
	\iota_1(\gamma)=\iota_1(\gamma_1)+\iota_1(\gamma_2)=2k,
	\end{equation}
	where $k$ is obtained by $k\cdot 2\pi<\sqrt{-ad}\,T\le (k+1)\cdot 2\pi$. 
	
\end{itemize}

\subsection*{Second case: $d=0$}

In this case the spectrum of  $A$ is given by  $0$ and $A$ is not semisimple. By direct computations we have
\begin{multline}
A^2=\begin{bmatrix}
0&0&0&0\\0&0&0&0\\0&ab&0&0\\-ab&0&0&0
\end{bmatrix} \qquad 
A^3=\begin{bmatrix}
0&0&0&0\\0&0&0&0\\0&0&0&0\\0&-ab^2&0&0
\end{bmatrix} \textrm{ and } 
A^j=0\qquad  \forall j\ge 4.
\end{multline}
Then the fundamental solution of the system  given at Equation~\eqref{eq:linear-system-general} is given by 
\begin{equation}
\gamma(t)=e^{A t}=\begin{bmatrix}
1&bt&0&0\\0&1&0&0\\at&abt^2/2 &1&0\\-abt^2/2&ct-ab^2t^3/6&-bt&1
\end{bmatrix}.
\end{equation}
We let  
\[
M_{1,1}(t)=\begin{bmatrix}
1&bt\\0&1
\end{bmatrix}\qquad M_{2,1}(t)=\begin{bmatrix}
at&abt^2/2 \\-abt^2/2&ct-ab^2t^3/6
\end{bmatrix} \quad \textrm{  and } \quad M_{22}(t)=\begin{bmatrix}
1&0\\-bt&1
\end{bmatrix}.
\]
In this case we can rewrite $\gamma(t)$ as a two by two block matrix. The computation of  the $\iota_1$ index  can be done, for instance, by using Equation~\eqref{eq:zhu's formula to compute maslov index}. Bearing in mind the notation given  in Equation~\ref{eq:app1} and formula given  in Equation~\eqref{eq:zhu's formula to compute maslov index}, we let $V=\Gr(I_2)$.  Then $V^I=\Gr(I_2)$ and $W_I(V)=\Gr(I_4)$. Since $M_{11}(0)=I_2$, then we have $S(0)=\R^2$. We need to compute  $S(T)$ which is related to the coefficient $b$. For this reason, we have to split into two subcases. 
\begin{itemize}
	\item[{\bf ($b=0$)}] In this case $M_{11}(T)=I_2$ and consequently $S(T)=\R^2$. Being 	$M_{21}(0)=0$, then we have 
	\begin{equation}
	\trasp{M}_{11}M_{21}=0\qquad \Rightarrow \qquad m^+(\trasp{M}_{11}M_{21}|_{S(0)})=0
	\end{equation}
	Moreover, we have $M_{21}(T)=\begin{bmatrix}
	aT&0\\0&cT
	\end{bmatrix}$ and so 
	\begin{equation}
	m^+(\trasp{M}_{11}(T)M_{21}(T)|_{S(T)})=\begin{cases}
	1\quad &\textrm{ if }\quad c\le 0\\
	2\quad &\textrm{ if }\quad c> 0.
	\end{cases}
	\end{equation}
	So, by Formula \eqref{eq:zhu's formula to compute maslov index}, we get
	\begin{equation}
	\iCLM(\Gr(I_4),\Gr(\gamma(t))=\begin{cases}
	1-0+2-2=1,\quad &\textrm{ if}\quad c\le  0\\
	2-0+2-2=2,\quad &\textrm{ if}\quad c> 0\\
	\end{cases}
	\end{equation}
	and by invoking Lemma~\ref{lem:relation between two indices}, then we conclude that  
	\begin{equation}\label{eq:index-d-zero-b-zero}
	\iota_1(\gamma(t))=\begin{cases}
	-1\quad &\textrm{ if }\quad c\le 0\\
	0 \quad &\textrm{ if }\quad c> 0
	\end{cases}
	\end{equation}	
	\item[{\bf ($b\neq 0$)}] In this case $M_{11}(T)=\begin{bmatrix}
	1&bT\\0&1
	\end{bmatrix}$ and consequently $S(T)=\{ \trasp{(x,0)}\ | \ x\in\R  \}$.		
	 Thus, 
	\begin{equation}
	\trasp{M}_{11}(T)M_{21}(T)=\begin{bmatrix}
	1&0\\bT&1
	\end{bmatrix}\cdot \begin{bmatrix}
	aT&abT^2/2\\-abT^2/2&cT-ab^2T^3/6
	\end{bmatrix}=\begin{bmatrix}
	aT&abT^2/2\\ \frac{ab}{2}T^2&cT+ab^2T^3/3
	\end{bmatrix}.
	\end{equation}
	Then, for every $X=\trasp{(x,0)}\in S(T)$ we have
	\begin{equation}
	\Mprod{\trasp{M}_{11}(T)M_{21}(T)X}{X}=\Mprod{\begin{bmatrix}
		aT&abT^2/2\\ abT^2/2&cT+ab^2T^3/3
		\end{bmatrix}\begin{bmatrix}
		x\\0
		\end{bmatrix}}{\begin{bmatrix}
		x\\0
		\end{bmatrix}}=aTx^2>0.
	\end{equation}	
	Therefore, by \eqref{eq:zhu's formula to compute maslov index} we have
	\begin{equation}
	\iCLM(Gr(I_4),Gr(\gamma(t))=1-0+2-1=2.
	\end{equation}	
	By Lemma~\ref{lem:relation between two indices}	 once again, we have
	\begin{equation}\label{eq:index-for-d-zero-b-nonzero}
	\iota_1(\gamma)=\iCLM(Gr(I_4),Gr(\gamma(t))-2=0.
	\end{equation}
	
\end{itemize}

\subsection*{Third case: $d$ positive}

All eigenvalues of $A$ are $\lambda_1=\lambda_2=0$ and $\lambda_3=\sqrt{ad}, \lambda_4=-\sqrt{ad}$. 
Since $0$ could have a non-trivial Jordan blocks, we discuss these two cases accordingly. 
\begin{itemize}
	\item {\bf Non-trivial Jordan block}. If  $b^2+cd\neq 0$, then eigenvalue $0$ is not semisimple.
	
	In this case,  by the discussions in \cite[pag. 118-122]{CLW94}, there exists a symplectic matrix $S\in\Sp(4,\R)$ such that
	\begin{equation}
	A=S\Big(\begin{bmatrix}0&s_0\\0&0\end{bmatrix}\diamond \begin{bmatrix}
	\sqrt{ad}&0\\ 0&-\sqrt{ad}
	\end{bmatrix}\Big)S^{-1},
	\end{equation}
	where $s_0$ is a constant that will be determined later. Let $\gamma(t)$ be the fundamental solution of system given at Equation~\eqref{eq:linear-system-general}. Then we have
	\begin{equation}\label{eq:splitting-of fundamental-solution-d>0}
	\gamma(t)=S(\gamma_1(t)\diamond \gamma_2(t))S^{-1},
	\end{equation}
	where $\gamma_1(t)=\begin{bmatrix}1&s_0t\\0&1
	\end{bmatrix}$ and $\gamma_2(t)=\begin{bmatrix}
	e^{\sqrt{ad}t}&0\\0&e^{-\sqrt{ad}t}
	\end{bmatrix}$. As before, by taking into account Lemma~\ref{lem:property of maslov-type index}, in order to compute $\iota_1(\gamma)$ we only need to compute $\iota_1(\gamma_1)$ and $\iota_1(\gamma_2)$ respectively.
	
	We start observing that for the path $t \mapsto\gamma_2(t)$ the unique crossing occurs at $t=0$ and, at this crossing, $\ker(\gamma(0)-I_2)=\R^2$. Moreover, the crossing form is represented w.r.t. to the scalar product of $\R^2$ by the matrix $-J\dot \gamma(0)\gamma(0)^{-1}=\begin{bmatrix}
	0&-\sqrt{ad}\\-\sqrt{ad}&0
	\end{bmatrix}$. By a straightforward computation we get that the  eigenvalues $\pm \sqrt{ad}$ and since the contribution to the Maslov index at the starting point is provided by the co-index according to formula given at  Equation~\eqref{eq:maslov index formula}, then we get that  $\iCLM(\Delta,\Gr \gamma_2(t), t \in [0,T])=1$. Consequently, by invoking  Lemma~\eqref{lem:relation between two indices},  we finally get	$\iota_1(\gamma_2)=0$.

	For computing the index $\iota_1(\gamma_1)$, we only need to determine the sign of $s_0$. Since the discussion is precisely the same as before will be left to the reader. We just mention that, in this case, the sign of $s_0$ coincides with that of $-(cd+b^2)$. Thus, we get 
	\begin{equation}
	\iota_1(\gamma_1)=\begin{cases}
	0,\quad &\text{if}\quad cd+b^2>0\\
	-1,\quad &\text{if} \quad cd+b^2<0
	\end{cases}
	\end{equation}
	Consequently, we have
	\begin{equation}\label{eq:index-d-positive}
	\iota_1(\gamma)=	\iota_1(\gamma_1)+\iota_1(\gamma_2)=\begin{cases}
	0,\quad &\text{if}\quad cd+b^2>0\\
	-1,\quad &\text{if} \quad cd+b^2<0
	\end{cases}
	\end{equation} 
	
	\item {\bf Trivial Jordan block.} If  $b^2+cd= 0$, then the  $0$ eigenvalue is semisimple.	 
	
	In this case,  by the normal form classification given at \cite[page 118-122]{CLW94}, there exists a symplectic matrix $S\in\Sp(4,\R)$ such that
	\begin{equation}
	A=S\Big(\begin{bmatrix}0&0\\0&0\end{bmatrix}\diamond \begin{bmatrix}
	\sqrt{ad}&0\\ 0&-\sqrt{ad}
	\end{bmatrix}\Big)S^{-1}.
	\end{equation}
	By the very same computation as before, we get 
	$\iota_1(\gamma_1)=-1$ and  $\iota_1(\gamma_2)=0$. Then by using once again Lemma~\ref{lem:property of maslov-type index}, then we get  
	\begin{equation}\label{eq:index-d-positive-trivial-jordan}
	\iota_1(\gamma)=\iota_1(\gamma_1)+\iota_1(\gamma_2)=-1.
	\end{equation}
	
\end{itemize}

\subsection*{Proof of Theorem~\ref{thm:main1}}
The proof of this result readily follows by summing up 
Equation~\eqref{eq:index-d-negative-and-zero-not-semisimple}, Equation~\eqref{eq:index-d-negative-and-zero-semisimple},
Equation~\eqref{eq:index-d-zero-b-zero}, Equation~\eqref{eq:index-for-d-zero-b-nonzero}, Equation~\eqref{eq:index-d-positive} and finally Equation~\eqref{eq:index-d-positive-trivial-jordan}, concluding the proof.
\subsection*{Proof of Theorem~\ref{thm:main1-intro}} The proof of this result is direct consequence of Theorem~\ref{thm:main1} and Theorem~\ref{lem:relation between morse and maslov-type}.

%%%%%%%%%%%%%%%%%%%%%%%%%%%%%%%%%%%%%%%%%%%%%%%%%%%%%%%%%%%%%%%%%%%%
%%
%%
%%
%%
%%
%%
%%%%%%%%%%%%%%%%%%%%%%%%%%%%%%%%%%%%%%%%%%%%%%%%%%%%%%%%%%%%%%%%%%%%

\section{Computations of Maslov index for circular orbits}\label{sec:computations of maslov index}

This section  is devoted to compute the generalized Conley-Zehnder index of the circular orbits on the sphere, pseudo-sphere and Euclidean plane. This will be achieved by using Theorem~\ref{thm:main1} and the computation provides in Section~\ref{sec:general-index}. So, the main issues is to explicitly compute the constants $a,b,c,d$ appearing in the aforementioned theorem.

We start by recalling that, in this case, the Lagrangian of the problem  is given by 
\begin{equation}
L(\xi, \vartheta,\dot \xi,\dot \vartheta)=\dfrac12 p(\xi)\big[\dot \xi^2+\xi^2\dot \vartheta^2\big]+q(\xi).
\end{equation}

%%%%%%%%%%%%%%%%%%%%%%%%%%%%%%%%%%%%%%%%%%%%%%%%%%%%%%%%%%%%%%%%%%%%
%%
%%
%%
%%
%%
%%
%%%%%%%%%%%%%%%%%%%%%%%%%%%%%%%%%%%%%%%%%%%%%%%%%%%%%%%%%%%%%%%%%%%%

\subsection{Circular orbits on the sphere}
In this case $
p(\xi)=\dfrac{2}{(1+\xi^2)^2} \textrm{ and } q(\xi)= \arctan^\alpha \xi$. In shorthand notation, we set  $F(\xi):=\arctan\xi$. 
Bearing in mind the notation given  in Equation~\eqref{eq:simplfied-notations-p-q}, we get
\begin{equation}\label{eq:derivatives-p-and-q-sphere}
\begin{aligned}
&p_0= 2(1+\xi_0^2)^{-2}  & & p'_0=-8\xi_0(1+\xi_0^2)^{-3} & &
p''_0=8(5\xi^2_0-1)(1+\xi_0^2)^{-4} \\
&q_0= F^\alpha(\xi_0)   &&q'_0=\alpha\cdot F^{\alpha-1}(\xi_0)(1+\xi^2_0)^{-1} && 
q''_0=\dfrac{\alpha(\alpha-1)F^{\alpha-2}(\xi_0)-2\alpha\xi_0 F^{\alpha-1}(\xi_0)}{(1+\xi^2_0)^{2}}\\
&\eta_0=2\xi_0^2(1+\xi_0^2)^{-2} && \eta_0'=4\xi_0(1-\xi_0^2)(1+\xi_0^2)^{-3} && \dot \vartheta_0^2=\dfrac{-\alpha F^{\alpha-1}(\xi_0)(1+\xi_0^2)^2}{2\xi_0(1-\xi_0^2)}
\end{aligned}
\end{equation}
In order for the (RHS) of $\dot \vartheta_0^2$ to be positive, we have to impose the following restriction on $\xi_0$: 
\begin{equation}\label{eq:range-of-xi-sphere}
\xi_0 \in \begin{cases}
 (0,1)\quad &\text{if}\  \alpha<0,\\ (1,+\infty)\quad &\text{if}\ \alpha>0. 
\end{cases}
\end{equation}
By a direct computation, we get
\begin{equation}\label{eq:a-b-c-d-sphere}
\begin{aligned}
& a=\dfrac{(1+\xi_0^2)^2}{2} && 
b=\dfrac{-2\xi_0^2+2}{\xi_0}\cdot \sqrt{\dfrac{-\alpha F^{\alpha-1}(\xi_0)}{2\xi_0(1-\xi_0^2)}}\\
&c=\dfrac{(1+\xi_0^2)^2}{2\xi_0^2} && 
d=\dfrac{\alpha F^{\alpha-2}(\xi_0)\{(3\xi_0^4-2\xi_0^2+3)\cdot F(\xi_0)+(\alpha-1)\xi_0(1-\xi_0^2)\}}{\xi_0(1+\xi_0^2)^2(1-\xi_0^2)}.
\end{aligned}
\end{equation}
In order to determine the generalized Conley-Zehnder index, we have to discuss the sign of $d$ according to $\alpha$ and $\xi_0$.

\subsection*{First case:  $\alpha$ positive}

Since in this case $\xi_0\in (1,+\infty)$, then we get that the sign of $d$ is minus the sign of 
\begin{equation}\label{eq:f-1-function}
f_1(\xi)=(3\xi^4-2\xi^2+3)\cdot \arctan \xi+(\alpha-1)\xi(1-\xi^2).
\end{equation}
We let
\begin{equation}
\Omega_1\=(1,+\infty)\times (0,+\infty)
\end{equation}
and we define the subregions
\begin{equation}
\Omega_1^\pm\=\Set{(\xi,\alpha)\in\Omega|\pm f_1(\xi)>0}
\end{equation} 
whose separatrix is given by 
\begin{equation}
\Omega_1^0=\Set{(\xi,\alpha)\in\Omega_1|\alpha=1+\dfrac{(3\xi^4-2\xi^2+3)\arctan\xi}{\xi(\xi^2-1)}}.
\end{equation}
We observe that the function $h$, defined by 
\[
h(\xi):=1+\dfrac{(3\xi^4-2\xi^2+3)\arctan\xi}{\xi(\xi^2-1)}
\]
is convex, bounded below by  constant $\alpha^*>1$ and such that 
\[
\lim_{\xi \to 1^-} h(\xi)=+\infty \qquad \textrm{ and }\qquad  \lim_{\xi \to +\infty} h(\xi)= +\infty.
\]
Moreover $h$ has an oblique asymptote defined by the equation
\[
\alpha= \xi + 1
\] 
\paragraph{Case: $(\xi_0,\alpha)\in\Omega_1^-$.} Then we have $f_1(\xi_0)<0$ and consequently by Equation~\eqref{eq:a-b-c-d-sphere}, we get that $d>0$. Since $c$ is always strictly positive, then we have $b^2+cd>0$. By taking into account Equation~\eqref{eq:index-d-positive}, we have 
\begin{equation}
\iota_1(\gamma)=0\qquad \text{if}\quad (\xi,\alpha)\in\Omega_1^-.
\end{equation}
\paragraph{Case: $(\xi_0,\alpha)\in\Omega_1^0$.}  Then we have $f_1(\xi_0)=0$ and consequently by using Equation~\eqref{eq:a-b-c-d-sphere} we have $d=0$. Since $c>0$, then by Equation~\eqref{eq:index-d-zero-b-zero} and  Equation~\eqref{eq:index-for-d-zero-b-nonzero}, we also have 
\begin{equation}
\iota_1(\gamma)=0\qquad \text{if}\quad (\xi,\alpha)\in\Omega_1^0.
\end{equation}
\paragraph{Case: $(\xi_0,\alpha)\in\Omega_1^+$.} Then we have $f_1(\xi_0)>0$ and consequently by Equation~\eqref{eq:a-b-c-d-sphere} we have $d<0$. Next we need to determine the sign of $b^2+cd$. Taking into account Equation~\eqref{eq:a-b-c-d-sphere} and after some algebraic manipulations, we  have
\begin{equation}\label{eq:b-square-plus-cd-sphere}
\begin{aligned}
b^2+cd&=
\dfrac{-\alpha F^{\alpha-2}(\xi_0)\{ (\xi_0^4-6\xi_0^2+1)\cdot F(\xi_0)-(\alpha-1)\xi_0(1-\xi_0^2)\}}{2\xi_0^3(1-\xi_0^2)}.
\end{aligned}
\end{equation}
We observe that, since  $F(\xi_0)$ is strictly positive and  $1-\xi_0^2<0$, then the sign of $b^2+cd$ coincides with that   of $(\xi_0^4-6\xi_0^2+1)\cdot F(\xi_0)-(\alpha-1)\xi_0(1-\xi_0^2)$. Let 
\begin{equation}\label{eq:f-2-function}
f_2(\xi)=(\xi^4-6\xi^2+1)\arctan\xi-(\alpha-1)\xi(1-\xi^2).
\end{equation}
Arguing precisely as before, we can split $\Omega_1^+$ into following three subregions 
\begin{multline}\label{eq:omega-1-positive-subregions}
\Omega_1^{+,+}=\Big\{(\xi,\alpha)\in\Omega_1\ | \ \alpha>1+\dfrac{(\xi^4-6\xi^2+1)\arctan\xi}{\xi(1-\xi^2)}\Big \}\\
	\Omega_1^{+,-}=\Big\{(\xi,\alpha)\in\Omega_1\ | \ \alpha<1+\dfrac{(\xi^4-6\xi^2+1)\arctan\xi}{\xi(1-\xi^2)}\Big \} \qquad \textrm{ and }\\
\Omega_1^{+,0}=\left\{(\xi,\alpha)\in\Omega_1\ | \ \alpha=1+\dfrac{(\xi^4-6\xi^2+1)\arctan\xi}{\xi(1-\xi^2)}\right\}.
\end{multline}
By this discussion, we finally get that 
\begin{equation}
b^2+cd \textrm{ is } \begin{cases}
\textrm{positive } & \textrm{ for }\quad (\xi,\alpha)\in \Omega_1^{+,+}\\[7pt]
\textrm{negative }& \textrm{ for }\quad (\xi,\alpha)\in \Omega_1^{+,-}.
\end{cases}
\end{equation}
In this case for computing  the $\iota_1$ index, we need to  calculate the integer $k$ appearing  in Equation~\eqref{eq:index-d-negative-and-zero-not-semisimple} and defined by $k\cdot 2\pi<\sqrt{-ad}\,T\le (k+1)\cdot 2\pi$. Since   
\begin{equation}
T=\dfrac{2\pi}{\dot{\vartheta}}=2\pi\, \sqrt{\frac{2\xi_0(1-\xi_0^2)}{-\alpha F^{\alpha-1}(\xi_0)(1+\xi_0^2)^2}}
\end{equation}
and using Equation~\eqref{eq:a-b-c-d-sphere},  we get
\begin{multline}
\sqrt{-ad}\cdot T= 2\pi f_3(\xi) \qquad \textrm{ where }\qquad 
f_3(\xi):=\sqrt{\dfrac{(3\xi^4-2\xi^2+3)\cdot\arctan\xi+(\alpha-1)\xi(1-\xi^2)}{\arctan\xi \cdot (1+\xi^2)^2}}.
\end{multline}
By a direct computation, we infer that 
\begin{equation}
\begin{aligned}
k&=0 \iff  \alpha\ge  1+\dfrac{2(\xi^2-1)\arctan \xi}{\xi}.
\end{aligned}
\end{equation}
So, denoting  these regions as follows 
\begin{multline}
\Omega_{1,0}^{+,\pm}\=\Set{(\xi,\alpha)\in\Omega_1^{+,\pm} \ | \  \alpha\ge 1+\dfrac{2(\xi^2-1)\arctan \xi}{\xi}}\\[7pt]
\Omega_{1,0}^{+,0}\=\Set{(\xi,\alpha)\in\Omega_1^{+,0} \ | \  \alpha\ge 1+\dfrac{2(\xi^2-1)\arctan \xi}{\xi}}.
\end{multline}
we finally get that for any $(\xi,\alpha)\in\Omega_{1,0}^{+,\pm}\bigcup \Omega_{1,0}^{+,0}$, we have $k=0$.

We observe that for every $(\xi,\alpha)\in\Omega_1^+$, it holds that $f_3(\xi)\le 2$. In fact, by a direct computation, we get  
\begin{equation}\label{eq:f-3-less-that-2-1}
\begin{aligned}
f_3(\xi)\le 2 &\iff 
\alpha \ge \dfrac{\xi(1-\xi^2)+(\xi^4+10\xi^2+1)\cdot \arctan\xi}{\xi(1-\xi^2)}.
\end{aligned}
\end{equation}
By an elementary computation, it readily follows that the numerator of the rational function at the (RHS) of Equation~\eqref{eq:f-3-less-that-2-1} is negative in $\Omega_1$.
So, the only case to be considered corresponds to   $k=1$ or otherwise stated to 
\begin{equation}\label{eq:k-is-1-equivalent-condition}
1<f_3(\xi)\le 2 \quad \iff \quad  0< \alpha <1+\dfrac{2(\xi^2-1)\arctan \xi}{\xi}.
\end{equation}
We set
\begin{multline}
\Omega_{1,1}^{+,\pm}=\Set{(\xi,\alpha)\in\Omega_1^{+,\pm}|  0< \alpha< 1+\dfrac{2(\xi^2-1)\arctan \xi}{\xi}}\\
\Omega_{1,1}^{+,0}=\Set{ (\xi,\alpha)\in\Omega_1^{+,0}| 0< \alpha< 1+\dfrac{2(\xi^2-1)\arctan \xi}{\xi}}.
\end{multline}
Then for every $(\xi,\alpha)\in\Omega_{1,1}^{+,+}\bigcup \Omega_{1,1}^{+,0}$ we have $k=1$.
 \begin{figure}[ht]
	\centering
	\includegraphics[scale=0.10]{sphere-positive-alpha.png}
	\caption{In the figures are displayed the subregions of the $\widehat{\xi O\alpha}$-region $\Omega_1:=(1,+\infty)\times (0, +\infty)$ labeled by the Morse index of the corresponding circular orbit} 
	\label{fig:sphere-positive-alpha}
\end{figure}

In Figure~\ref{fig:sphere-positive-alpha} are displayed all the involved regions. 

By invoking Equation~\eqref{eq:index-d-negative-and-zero-not-semisimple} and Equation~\eqref{eq:index-d-negative-and-zero-semisimple} we finally get 
\begin{equation}
\iota_1(\gamma)=\begin{cases}
0 \quad &\textrm{ if }\quad (\xi_0,\alpha)\in \Omega_1^{-}\bigcup\Omega_1^{0}\bigcup \Omega_{1,0}^{+,+}\bigcup\Omega_{1,0}^{+,0}\\
1 \quad &\textrm{ if }\quad (\xi_0,\alpha)\in \Omega_{1,0}^{+,-}\\
2\quad &\textrm{ if }\quad (\xi_0,\alpha)\in \Omega_{1,1}^{+,+}\bigcup \Omega_{1,1}^{+,0}\\
3 \quad &\textrm{ if }\quad (\xi_0,\alpha)\in \Omega_{1,1}^{+,-}\\
\end{cases}
\end{equation}

%%%%%%%%%%%%%%%%%%%%%%%%%%%%%%%%%%%%%%%%%%%%%%%%%%%%%%%%%%%%%%%%%%%%
%%
%%
%%
%%
%%
%%
%%%%%%%%%%%%%%%%%%%%%%%%%%%%%%%%%%%%%%%%%%%%%%%%%%%%%%%%%%%%%%%%%%%%

\subsection*{Second case:  $\alpha$ negative}

In this case, by taking into account the restrictions provided at Equation~\eqref{eq:range-of-xi-sphere}, we have $\xi_0\in (0,1)$ and consequently $1-\xi_0^2>0$. Arguing precisely as before, we need to establish the signs of $d$ and $b^2+cd$ as well as the value of $k$. Since all expressions  are precisely as before with  the only difference about the range of $\xi$ and $\alpha$. 

We let 
\begin{equation}
\Omega_2\=(0,1)\times (-\infty, 0).
\end{equation}
and we start observing that the sign of $f_1(\xi)$ defined in \eqref{eq:f-1-function} for $(\xi,\alpha)\in\Omega_2$ is opposite to  that of $d$. Precisely as before,  we let 
\begin{multline}
\Omega_2^\pm\=\Set{(\xi,\alpha)\in\Omega_2| \pm f_1(\xi)>0}\qquad  \textrm{ and }\\
\Omega_2^0=\Set{(\xi,\alpha)\in\Omega_2|\alpha=1+\dfrac{(3\xi^4-2\xi^2+3)\arctan\xi}{\xi(\xi^2-1)}}.
\end{multline}
\begin{figure}[ht]
	\centering
	\includegraphics[scale=0.10]{sphere-negative-alpha.png}
	\caption{In the figures are displayed the subregions of the $\widehat{\xi O\alpha}$-region $\Omega_2:=(1,+\infty)\times (-\infty, 0)$ labeled by the Morse index of the corresponding circular orbit} 
	\label{fig:sphere-negative-alpha}
\end{figure}
For $(\xi,\alpha)\in\Omega_2^-$, we have $d>0$ and since $c>0$, then we get $cd+b^2>0$.  

By invoking Equation~\eqref{eq:index-d-positive}, then we get 
 \begin{equation}
\iota_1(\gamma)=0, \quad \textrm{ if }\quad (\xi,\alpha)\in\Omega_2^-.
\end{equation}
For $(\xi,\alpha)\in\Omega_2^0$, since $c>0$, and by taking into account  Equation~\eqref{eq:index-for-d-zero-b-nonzero} and Equation~\eqref{eq:index-d-zero-b-zero} we conclude that 
\begin{equation}
\iota_1(\gamma)=0, \quad \text{if}\quad (\xi,\alpha)\in\Omega_2^0.
\end{equation}
For $(\xi,\alpha)\in\Omega_2^+$, we have $d<0$. Then we need to determine the sign of $b^2+cd$. By Equation~\eqref{eq:b-square-plus-cd-sphere} we already know that the sign of $b^2+cd$ coincides with that of $f_2(\xi)$ defined at Equation~\eqref{eq:f-2-function} for $(\xi,\alpha)\in\Omega_2^+$. Then we define 
	\begin{multline}
	\Omega_2^{+,\pm}=\Set{(\xi,\alpha)\in\Omega_2^+| \pm f_2(\xi)>0} \qquad \textrm{ and }\\
	\Omega_2^{+,0}=\Set{(\xi,\alpha)\in\Omega_2^+| \ f_2(\xi)=0 }
	\end{multline}
and by an algebraic manipulation reduces to 
\begin{multline}
\Omega_2^{+,+}=\Set{(\xi,\alpha)\in\Omega_2^+|\alpha<1+\dfrac{(\xi^4-6\xi^2+1)\cdot \arctan\xi}{\xi(1-\xi^2)}}\\
\Omega_2^{+,-}=\Set{(\xi,\alpha)\in\Omega_2^+|\alpha>1+\dfrac{(\xi^4-6\xi^2+1)\cdot \arctan\xi}{\xi(1-\xi^2)}}\\
\Omega_2^{+,0}=\Set{(\xi,\alpha)\in\Omega_2^+|\alpha=1+\dfrac{(\xi^4-6\xi^2+1)\cdot \arctan\xi}{\xi(1-\xi^2)}}.
\end{multline}	
So, there holds that
\begin{equation}
cd+b^2 \textrm{ is } \begin{cases}
 	\textrm{positive} & \textrm{ for } (\xi,\alpha)\in\Omega_2^{+,+}\\
 	\textrm{negative} & \textrm{ for } (\xi,\alpha)\in\Omega_2^{+,-}\\
 	\textrm{zero} & \textrm{ for } (\xi,\alpha)\in\Omega_2^{+,0}.
 \end{cases}
\end{equation}
	Arguing precisely as before, we get that  $f_3(\xi)<2$ for every  $(\xi,\alpha)\in \Omega_2^+$. Moreover, it is straightforward to check that 
	\begin{equation}
	\begin{aligned}
	k&=0 \iff
	\alpha\le  1-\dfrac{2(1-\xi^2)\arctan \xi}{\xi}.
	\end{aligned}
	\end{equation}
	Now we define the following planar regions
	\begin{multline}
	\Omega_{2,0}^{+,\pm}\=\Set{(\xi,\alpha)\in\Omega_2^{+,\pm}| \alpha\le 1-\dfrac{2(1-\xi^2)\arctan \xi}{\xi}}\\
	\Omega_{2,1}^{+,\pm}\=\Set{(\xi,\alpha)\in\Omega_2^{+,\pm}| 1-\dfrac{2(1-\xi^2)\arctan \xi}{\xi}< \alpha< 0}\\
	\Omega_{2,0}^{+,0}\=\Set{(\xi,\alpha)\in\Omega_2^{+,0}|\alpha\le 1-\dfrac{2(1-\xi^2)\arctan \xi}{\xi}}\\
	\Omega_{2,1}^{+,0}\=\Set{(\xi,\alpha)\in\Omega_2^{+,0}| 1-\dfrac{2(1-\xi^2)\arctan \xi}{\xi}< \alpha< 0}.
	\end{multline}
	Therefore, we get that  $k=0$ for $(\xi,\alpha)\in\Omega_{2,0}^{+,\pm}\bigcup\Omega_{2,0}^{+,0}$ and $k=1$ for $(\xi,\alpha)\in\Omega_{2,1}^{+,\pm}\bigcup\Omega_{2,1}^{+,0}$. Now, by invoking Equation~\eqref{eq:index-d-negative-and-zero-not-semisimple}, we get 
	\begin{equation}
	\iota_1(\gamma)=\begin{cases}
	0 & \textrm{ if }\quad (\xi,\alpha)\in\Omega_{2,0}^{+,+}\bigcup\Omega_{2,0}^{+,0}\bigcup\Omega_2^-\bigcup\Omega_2^0\\
	1 & \textrm{ if }\quad (\xi,\alpha)\in\Omega_{2,0}^{+,-}\\
	2 & \textrm{ if }\quad (\xi,\alpha)\in\Omega_{2,1}^{+,+}\bigcup \Omega_{2,1}^{+,0}\\
	3& \textrm{ if }\quad (\xi,\alpha)\in\Omega_{2,1}^{+,-}
	\end{cases}
	\end{equation}

We finally are in position to summarize the involved discussion in the following conclusive result for the sphere. 
\begin{thm}\label{pro:index-of-sphere}
	Under above notations, the indices of a circular orbit on sphere are given by
	\begin{equation}
	\iota_1(\gamma)=\begin{cases}
	0  &\textrm{ if }\quad (\xi_0,\alpha)\in \Omega_1^{-}\bigcup\Omega_1^{0}\bigcup \Omega_{1,0}^{+,+}\bigcup\Omega_{1,0}^{+,0}\bigcup\Omega_{2,0}^{+,+}\bigcup\Omega_{2,0}^{+,0}\bigcup\Omega_2^-\bigcup\Omega_2^0\\
	1  &\textrm{ if }\quad (\xi_0,\alpha)\in\Omega_{1,0}^{+,-}\bigcup \Omega_{2,0}^{+,-} \\
	2 &\textrm{ if }\quad (\xi_0,\alpha)\in \Omega_{1,1}^{+,+}\bigcup \Omega_{1,1}^{+,0}\bigcup\Omega_{2,1}^{+,+}\bigcup \Omega_{2\bigcup,1}^{+,0} \\
	3 &\textrm{ if }\quad (\xi_0,\alpha)\in \Omega_{1,1}^{+,-}\bigcup \Omega_{2,1}^{+,-}\\
	\end{cases}
	\end{equation}
\end{thm}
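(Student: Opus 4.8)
The plan is to read off the index region by region from Theorem~\ref{thm:main1}, whose trichotomy depends only on the sign of $d$, the sign of $cd+b^2$, and the integer $k$ fixed by $k\cdot 2\pi<\sqrt{-ad}\,T\le(k+1)\cdot 2\pi$. Since the explicit values of $a,b,c,d$ have already been recorded in \eqref{eq:a-b-c-d-sphere}, and since $a$ and $c$ are manifestly positive throughout, everything reduces to controlling three scalar quantities as functions of $(\xi_0,\alpha)$. First I would split according to the sign of $\alpha$, because admissibility of a circular orbit, i.e.\ positivity of $\dot\vartheta_0^2$, forces $\xi_0\in(1,+\infty)$ when $\alpha>0$ and $\xi_0\in(0,1)$ when $\alpha<0$, as recorded in \eqref{eq:range-of-xi-sphere}. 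This is precisely the distinction between $\Omega_1$ and $\Omega_2$.

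Within each case I would use the three auxiliary functions already introduced above: $f_1$ in \eqref{eq:f-1-function} controlling the sign of $d$, $f_2$ in \eqref{eq:f-2-function} controlling the sign of $cd+b^2$ on the locus $d<0$, and $f_3$ entering $\sqrt{-ad}\,T=2\pi f_3(\xi)$ and hence fixing $k$. The sign of $d$ is opposite to that of $f_1$ throughout (the prefactor in \eqref{eq:a-b-c-d-sphere} being negative in both regimes); solving $f_1=0$ for $\alpha$ yields the separatrix $\Omega_i^0$ and the half-regions $\Omega_i^\pm$. On $\Omega_i^+$, where $d<0$, I would subdivide further using $f_2$, solving $f_2=0$ for $\alpha$ to obtain the curves $\Omega_i^{+,0}$ and the regions $\Omega_i^{+,\pm}$ on which $cd+b^2$ is respectively positive and negative.

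The remaining ingredient is the value of $k$ on the region $d<0$. Here I would show that $f_3(\xi)\le 2$ throughout $\Omega_1^+$ (resp.\ $f_3(\xi)<2$ throughout $\Omega_2^+$), which confines $k$ to $\{0,1\}$ and is exactly what caps the index at $3$; the boundary $f_3=1$ then separates $k=0$ from $k=1$ and produces the finer subregions $\Omega_{i,0}^{+,\bullet}$ and $\Omega_{i,1}^{+,\bullet}$. With the signs of $d$ and of $cd+b^2$ and the value of $k$ known on every piece, a case-by-case appeal to \eqref{eq:index-d-positive}, to \eqref{eq:index-d-zero-b-zero} and \eqref{eq:index-for-d-zero-b-nonzero}, and to \eqref{eq:index-d-negative-and-zero-not-semisimple} and \eqref{eq:index-d-negative-and-zero-semisimple} assigns to each region its index $0,1,2$ or $3$; collecting the two cases $\alpha>0$ and $\alpha<0$ then gives the stated piecewise formula.

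The main obstacle is the inequality $f_3\le 2$. Unlike the sign conditions for $d$ and $cd+b^2$, which reduce to solving for $\alpha$ and reading off a threshold, the bound on $k$ requires showing that the numerator of the rational function in \eqref{eq:f-3-less-that-2-1}, namely $\xi(1-\xi^2)+(\xi^4+10\xi^2+1)\arctan\xi$, has constant sign on the relevant $\xi$-range. This is a genuinely transcendental estimate, mixing a polynomial with $\arctan$ rather than a purely algebraic one, and it is what guarantees that the index never exceeds $3$; establishing it carefully, together with the convexity and asymptotic behaviour of the threshold curve $h$, is where the real work lies.
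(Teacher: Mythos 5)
Your proposal follows essentially the same route as the paper: split by the sign of $\alpha$ into $\Omega_1$ and $\Omega_2$ via the admissibility condition \eqref{eq:range-of-xi-sphere}, stratify each region by the signs of $f_1$ and $f_2$ (hence of $d$ and $cd+b^2$), pin down $k\in\{0,1\}$ through the bound $f_3\le 2$ and the threshold $f_3=1$, and then read off the index from \eqref{eq:index-d-positive}, \eqref{eq:index-d-zero-b-zero}, \eqref{eq:index-for-d-zero-b-nonzero}, \eqref{eq:index-d-negative-and-zero-not-semisimple} and \eqref{eq:index-d-negative-and-zero-semisimple}. You also correctly isolate the one genuinely non-algebraic step, the transcendental estimate behind \eqref{eq:f-3-less-that-2-1}, which is exactly where the paper's argument concentrates its effort.
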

A direct consequence of Proposition \ref{pro:index-of-sphere} concerns the Morse index of the circular orbits in two  physically interesting cases: $\alpha=-1$ (resp.  $\alpha=2$) corresponding respectively  to a gravitational (resp. elastic) like potential.
	\begin{cor}
		For $\alpha=-1$, then we get
		\begin{equation}
		\iota_1(\gamma)=\begin{cases}
		0 & \textrm{ if } \quad (\xi,-1)\in\Omega_{2,0}^{+,+}\bigcup\Omega_{2,0}^{+,0}\\
		1 & \textrm{ if }\quad (\xi,-1)\in\Omega_{2,0}^{+,-}.
		\end{cases}
		\end{equation} 
		moreover the circular orbit is linearly stable if $(\xi,-1)\in  \Omega_2^{+,0}$ and linearly unstable for $(\xi,-1)\in\Omega_2\backslash\Omega_2^{+,0}$.
	\end{cor}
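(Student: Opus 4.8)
The plan is to specialize Proposition~\ref{pro:index-of-sphere} to the horizontal line $\alpha=-1$ inside the parameter region $\Omega_2=(0,1)\times(-\infty,0)$, and to read off the linear stability from the normal-form analysis carried out in Section~\ref{sec:general-index}. The entire content of the corollary is that this line meets only three of the subregions appearing in Proposition~\ref{pro:index-of-sphere}, namely $\Omega_{2,0}^{+,+}$, $\Omega_{2,0}^{+,0}$ and $\Omega_{2,0}^{+,-}$; once this localization is established, both the index values and the stability statement are forced by the case analysis already in place.

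First I would show that the line $\{\alpha=-1\}$ lies entirely in $\Omega_2^+$, i.e. that $d<0$ for every $\xi\in(0,1)$. Since the sign of $d$ is opposite to that of the function $f_1$ of \eqref{eq:f-1-function} on $\Omega_2$, it suffices to prove the elementary inequality
\[
f_1(\xi)\big|_{\alpha=-1}=(3\xi^4-2\xi^2+3)\arctan\xi-2\xi(1-\xi^2)>0,\qquad \xi\in(0,1).
\]
The coefficient $3\xi^4-2\xi^2+3$ is positive (its discriminant in $\xi^2$ is negative), and a Taylor expansion at $\xi=0$ shows the leading behavior is $\xi(1-\xi^2)>0$; combined with a monotonicity/convexity estimate this gives positivity on all of $(0,1)$. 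Consequently neither $\Omega_2^-$ nor $\Omega_2^0$ meets the line $\alpha=-1$, so the alternatives $d\ge 0$ are excluded.

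Next I would pin down the integer $k$. Recall that on $\Omega_2$ one has $k=0$ exactly when $\alpha\le 1-\tfrac{2(1-\xi^2)\arctan\xi}{\xi}$; for $\alpha=-1$ this reduces to $\tfrac{(1-\xi^2)\arctan\xi}{\xi}\le 1$, which is immediate from $\arctan\xi<\xi$ and $0<1-\xi^2<1$ on $(0,1)$. Hence $k=0$ along the whole line, so the $k=1$ subregions $\Omega_{2,1}^{+,\pm},\Omega_{2,1}^{+,0}$ are never reached and the index values $2$ and $3$ are ruled out. Feeding $d<0$ and $k=0$ into \eqref{eq:index-d-negative-and-zero-not-semisimple} and \eqref{eq:index-d-negative-and-zero-semisimple} yields $\iota_1(\gamma)=2k=0$ when $cd+b^2\ge 0$ (the regions $\Omega_{2,0}^{+,+}\cup\Omega_{2,0}^{+,0}$) and $\iota_1(\gamma)=2k+1=1$ when $cd+b^2<0$ (the region $\Omega_{2,0}^{+,-}$), which is exactly the claimed dichotomy.

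Finally, for linear stability I would invoke the Jordan/normal-form description of $A$ from the first case of Section~\ref{sec:general-index}. Since $d<0$ along $\alpha=-1$, the spectrum of $A$ is $\{0,0,\pm i\sqrt{-ad}\}$, purely imaginary, so boundedness of all solutions depends solely on semisimplicity of the zero eigenvalue. When $b^2+cd=0$, i.e. $(\xi,-1)\in\Omega_2^{+,0}$, the normal form is $0_{2\times 2}\diamond R(\sqrt{-ad}\,t)$, whose flow is bounded, giving linear stability; when $b^2+cd\neq 0$ the shear block with off-diagonal entry $s_0 t$ appears and produces linearly growing solutions, hence instability. Because we have already excluded $d\ge 0$ on this line, the unstable set is precisely $\Omega_2\setminus\Omega_2^{+,0}$ restricted to $\alpha=-1$. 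The main obstacle is the sign inequality for $f_1$; everything else is bookkeeping within the classification already established.
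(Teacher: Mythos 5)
Your proposal is correct and follows essentially the same route as the paper's proof: restrict to the line $\alpha=-1$, observe it lies entirely in $\Omega_2^+$ (so $d<0$ with $k=0$), read the index off the case analysis of Section~\ref{sec:general-index}, and decide stability by whether the zero eigenvalue is semisimple, i.e.\ whether $cd+b^2$ vanishes, via the normal forms in \eqref{eq:splitting-of fundamental-solution-d<0} and \eqref{eq:splitting-of fundamental-solution-d<0-simple}. If anything, you supply the two elementary verifications (positivity of $f_1$ at $\alpha=-1$ and the inequality forcing $k=0$) that the paper asserts without proof.
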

\begin{proof}
	Since $\alpha=-1$, the corresponding region is  $\Omega_2^+$ and so $d<0$. If, $(\xi,-1)\in\Omega_2^{+,\pm}$, then we have $cd+b^2\neq 0$. Using Equation~\eqref{eq:splitting-of fundamental-solution-d<0} there exists a nontrivial Jordan block. In particular, we get that the corresponding circular solution is linearly unstable. 
	
	If $(\xi,-1)\in\Omega_2^{+,0}$, then we have $cd+b^2=0$ and consequently  taking into account Equation~\eqref{eq:splitting-of fundamental-solution-d<0-simple}, we get that the corresponding circular solution is linear stable.
	
	The proof for the case $\alpha=2$  can be obtained by arguing as before and will be left to the reader. 
	\end{proof}

%%%%%%%%%%%%%%%%%%%%%%%%%%%%%%%%%%%%%%%%%%%%%%%%%%%%%%%%%%%%%%%%%%%%
%%
%%
%%
%%
%%
%%
%%%%%%%%%%%%%%%%%%%%%%%%%%%%%%%%%%%%%%%%%%%%%%%%%%%%%%%%%%%%%%%%%%%%

\subsection{Circular orbits on the hyperbolic plane}\label{sec:hyperbolic case}

This subsection is devoted to compute the $\iota_1$ index for circular orbits on the pseudo-sphere.  In this case
\begin{equation}
p(\xi)=\dfrac{2}{(1-\xi^2)^2}\qquad \textrm{ and }  \qquad q(\xi)= \ln^\alpha\left(\dfrac{1+\xi}{1-\xi}\right)\qquad \xi\in(0,1).
\end{equation}
We let  
\begin{equation}
G(\xi)\=\ln\left(\dfrac{1+\xi}{1-\xi}\right)\qquad \textrm{ and } G'(\xi)\=\dfrac{dG}{d\xi}(\xi).
\end{equation}
By a direct computations we have 
\begin{align}\label{eq:derivatives-p-and-q-hyper}
&p'(\xi)=\dfrac{8\xi}{(1-\xi^2)^3} &&
p''(\xi)=\dfrac{8+40\xi^2}{(1-\xi^2)^4}\\
& q'(\xi)=\dfrac{2\alpha\cdot G^{\alpha-1}(\xi)}{1-\xi^2} && 
q''(\xi)=\dfrac{4\alpha(\alpha-1)G^{\alpha-2}(\xi)+4\alpha\xi G^{\alpha-1}(\xi)}{(1-\xi^2)^2}.
\end{align}
\begin{note}
Abusing notation, let us now introduce the following notation similar to that of  Equation~\eqref{eq:simplfied-notations-p-q}, we have
\begin{align}
&p_0=\dfrac{2}{(1-\xi_0^2)^2}&& p'_0=\frac{8\xi_0}{(1-\xi_0^2)^3}&& p''_0=\frac{8+40\xi_0^2}{(1-\xi_0^2)^4}\\
&q_0=G^\alpha(\xi_0) &&q'_0=\dfrac
{2\alpha G^{\alpha-1}(\xi_0)}{1-\xi_0^2} &&
q''_0=\dfrac{4\alpha(\alpha-1) G^{\alpha-2}(\xi_0)+4\alpha\xi_0 G^{\alpha-1}(\xi_0)}{(1-\xi_0^2)^2}\\
& \eta_0=\dfrac{2\xi_0^2}{(1-\xi_0^2)^2} && \eta_0'=\dfrac{4\xi_0^3+4\xi_0}{(1-\xi_0^2)^3}&& \dot \vartheta^2=\dfrac{-\alpha G^{\alpha-1}(\xi_0)(1-\xi_0^2)^2}{\xi_0(1+\xi_0^2)}.
\end{align}
\end{note}
Since the (RHS) of the equation defining $\dot \vartheta^2$ should be positive,  we  only restrict to the case 
\begin{equation}\label{eq:range-of-alpha-hyper}
\alpha<0.
\end{equation}
By a straightforward computation, we get 
\begin{multline}\label{eq:for-r11-hyper}
\zeta_0=\dfrac{4\xi_0(1+\xi_0^2)}{(1-\xi_0^2)^2}\cdot \sqrt{\dfrac{-\alpha G^{\alpha-1}(\xi_0)}{\xi_0(1+\xi_0^2)}} \qquad
\dfrac12 \eta_0''=\dfrac{6\xi_0^4+16\xi_0^2+2}{(1-\xi_0^2)^4}\qquad 
2q_0'\,(\ln \eta_0)'=\dfrac{8\alpha(\xi_0)(1+\xi_0^2)}{\xi_0(1-\xi_0^2)^2}
\\
\eta_0' \cdot \left[\dfrac{q_0'}{\eta_0'}\right]'=\dfrac{-2\alpha G^{\alpha-2}(\xi_0)[(\xi_0^4+6\xi_0^2+1)\cdot G(\xi_0)-2(\alpha-1)\xi_0(1+\xi_0^2)]}{\xi_0(1-\xi_0^2)^2(1+\xi_0^2)}.
\end{multline}
Then the four constants appearing at \eqref{eq:notations-a-b-c-d} are the following
\begin{multline}\label{eq:a-b-c-d-hyper}
a=p_0^{-1}=\dfrac{(1-\xi_0^2)^2}{2}\qquad 
b=\eta_0^{-1}\zeta_0=\dfrac{2(1+\xi_0^2)}{\xi_0}\cdot \sqrt{\dfrac{-\alpha G^{\alpha-1}(\xi_0)}{\xi_0(1+\xi_0^2)}}\qquad 
c=\eta_0^{-1}=\dfrac{(1-\xi_0^2)^2}{2\xi_0^2}\\
d=2q_0'\,(\ln \eta_0)'+\eta_0'\left(\dfrac{q_0'}{\eta_0'}\right)'=\dfrac{2\alpha G^{\alpha-2}(\xi_0)\{(3\xi_0^4+2\xi_0^2+3)\cdot G(\xi_0)+2(\alpha-1)\xi_0(1+\xi_0^2)\}}{\xi_0(1-\xi_0^2)^2(1+\xi_0^2)}.
\end{multline}
Similar to the sphere case, we start determining the sign of $d$ ranging in the  parameter region 
\begin{equation}
\Omega_3\=(0,1)\times (-\infty, 0).
\end{equation}
By the explicit computation of $d$ given at Equation~\eqref{eq:a-b-c-d-hyper}, we infer that  the sign of $d$ is  minus  the sign of $(3\xi_0^4+2\xi_0^2+3)\cdot G(\xi_0)+2(\alpha-1)\xi_0(1+\xi_0^2)$.
We let
\begin{equation}\label{eq:g-1-function}
g_1(\xi)=(3\xi^4+2\xi^2+3)\cdot \ln\left(\dfrac{1+\xi}{1-\xi}\right)+2(\alpha-1)\xi(1+\xi^2)\qquad (\xi,\alpha)\in\Omega_3.
\end{equation}
As before we split $\Omega_3$ into three subregions
\begin{equation}
\Omega_3=\Omega_3^+\bigcup \Omega_3^-\bigcup\Omega_3^0
\end{equation}
where
\begin{equation}
\Omega_3^\pm\=\Set{(\xi,\alpha)\in\Omega_3|\pm g_1(\xi)>0 },\quad  \Omega_1^0\=\Set{(\xi,\alpha)\in\Omega_3| g_1(\xi)=0 }.
\end{equation} 
By an algebraic manipulation, we get that 
\begin{multline}
\Omega_3^0=\Set{(\xi,\alpha)\in\Omega_3|\alpha=1-\dfrac{(3\xi^4+2\xi^2+3)\ln\left((1+\xi)/(1-\xi)\right)}{2\xi(1+\xi^2)}}\\
\Omega_3^{\pm}=\Set{(\xi,\alpha)\in\Omega_3|\pm\alpha>1-\dfrac{(3\xi^4+2\xi^2+3)\ln\left((1+\xi)/(1-\xi)\right)}{2\xi(1+\xi^2)}}.
\end{multline}
\paragraph{Case: $(\xi_0,\alpha)\in\Omega_3^-$.} Then we have $g_1(\xi_0)<0$ and consequently by Equation~\eqref{eq:a-b-c-d-hyper}, it follows that $d>0$. Since $c$ is positive, it readily follows that $b^2+cd>0$. By invoking  Equation~\eqref{eq:index-d-positive}, we finally get
\begin{equation}
\iota_1(\gamma)=0,\quad \text{if}\quad (\xi,\alpha)\in\Omega_3^-.
\end{equation}
\paragraph{Case: $(\xi_0,\alpha)\in\Omega_3^0$.} Then we have $g_1(\xi_0)=0$ and consequently by Equation~\eqref{eq:a-b-c-d-hyper}, we get that $d=0$. Since $c$ is positive, then by invoking Equation~\eqref{eq:index-d-zero-b-zero}  and  Equation~\eqref{eq:index-for-d-zero-b-nonzero}, we also have 
\begin{equation}
\iota_1(\gamma)=0,\quad \text{if}\quad (\xi,\alpha)\in\Omega_3^0.
\end{equation}
\paragraph{Case: $(\xi_0,\alpha)\in\Omega_3^+$.} Then we have $g_1(\xi_0)>0$ and consequently by Equation~\eqref{eq:a-b-c-d-hyper}, we get that $d<0$. 

 Again we have to study the sign of $b^2+cd$.  By Equation~\eqref{eq:a-b-c-d-hyper} and by a  direct computation, we get
\begin{equation}\label{eq:b-square-plus-cd-hyper}
b^2+cd=\frac{-\alpha G^{\alpha-2}(\xi_0)\{ (\xi_0^4+6\xi_0^2+1)\cdot G(\xi_0)-2(\alpha-1)\xi_0(1+\xi_0^2)\}}{\xi_0^3(1+\xi_0^2)}.
\end{equation}
Now, we observe that since $\alpha<0$ and $G(\xi_0)>0$,  then the sign of $b^2+cd$ is equal to  the sign of the function
\begin{equation}\label{eq:g-2-function}
g_2(\xi):=(\xi^4+6\xi^2+1)\ln\left(\dfrac{1+\xi}{1-\xi}\right)-2(\alpha-1)\xi(1+\xi^2)
\end{equation}
which is always positive in $\Omega_3^+$, being  $\alpha$ is negative.
\paragraph{Finding  the time interval.}  In order to compute the index by \eqref{eq:index-d-negative-and-zero-not-semisimple} we have to determine the value of $k$. Recall that the value of $k$ is determined by $k\cdot 2\pi<\sqrt{-ad}T\le (k+1)\cdot 2\pi$. Indeed,  we have 
\begin{equation}
T=\frac{2\pi}{\dot{\vartheta}}=2\pi\sqrt{\frac{\xi_0(1+\xi_0^2)}{-\alpha G^{\alpha-1}(\xi_0)(1-\xi_0^2)^2}}.
\end{equation}
Moreover, by Equation~\eqref{eq:a-b-c-d-hyper} we have
\begin{equation}
\sqrt{-ad}=\sqrt{-\dfrac{\alpha G^{\alpha-2}(\xi_0)(3\xi_0^4+2\xi_0^2+3)\cdot G(\xi_0)+2(\alpha-1)\xi_0(1+\xi_0^2)}{\xi_0(1+\xi_0^2)}}.
\end{equation}
Therefore, 
\begin{equation}
\sqrt{-ad}\cdot T=2\pi\cdot \sqrt{\dfrac{(3\xi_0^4+2\xi_0^2+3)\cdot G(\xi_0)+2(\alpha-1)\xi_0(1+\xi_0^2)}{G(\xi_0)(1-\xi_0^2)^2}}.
\end{equation}
Let
\begin{equation}\label{eq:g-3-function}
g_3(\xi):=\sqrt{\dfrac{(3\xi^4+2\xi^2+3)\cdot \ln\left(\dfrac{1+\xi}{1-\xi}\right)+2(\alpha-1)\xi(1+\xi^2)}{(1-\xi^2)^2\ln\left(\dfrac{1+\xi}{1-\xi}\right)}}.
\end{equation}

It is easy to see that  for every fixed $\alpha$ and  $k\in\N$ there exists $(\xi,\alpha)\in\Omega_3^+$ such that $k<g_3(\xi)\le k+1$. Since for every $\alpha<0$ 
we have 
\begin{equation}
\lim_{\xi\rightarrow 1^-}g_3(\xi)=+\infty
\end{equation}
we can define the subregions
\begin{equation}
	\Omega_{3,k}^{+}\=\Set{ (\xi,\alpha)\in\Omega_3^{+}|  k<g_3(\xi)\le k+1 }.
\end{equation}
All the subregions are shown in Figure \ref{fig:hyper-negative-alpha}. 
\begin{figure}[ht]
	\centering
	\includegraphics[scale=0.15]{hyper-negative-alpha.png}
		\caption{In the figure we represent the subregions of the $\Omega_3$ corresponding to the jumps of the Morse index.}
	\label{fig:hyper-negative-alpha}
\end{figure}
Invoking Equation~\eqref{eq:index-d-negative-and-zero-not-semisimple}, then we get that the $\iota_1$ index is given by 
\begin{equation}
\iota_1(\gamma)=2k \qquad \textrm{ if }\quad (\xi,\alpha)\in \Omega_{3,k}^+.
\end{equation}
Summing up, we conclude with the following result. 
\begin{thm} \label{pro:index-of-hyper}
	Under above notations, the Morse index of the circular orbit on the  hyperbolic plane is given by
	\begin{equation}
	\iota_1(\gamma)=\begin{cases}
	0 &\textrm{ if }\quad (\xi_0,\alpha)\in \Omega_{3}^{-}\bigcup\Omega_{3}^{0}\bigcup\Omega_{3,0}^+\\[7pt]
	2k  &\textrm{ if }\quad (\xi_0,\alpha)\in \Omega_{3,k}^+,\quad  k\in \N^*.
	\end{cases}
	\end{equation}	  
\end{thm}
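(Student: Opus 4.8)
The plan is to derive the statement as a specialization of the general index formula of Theorem~\ref{thm:main1} to the four scalars $a,b,c,d$ attached to a hyperbolic circular orbit. Those scalars are computed explicitly in Equation~\eqref{eq:a-b-c-d-hyper}, so once they are in hand the index is completely determined by three pieces of data: the sign of $d$, the sign of $cd+b^2$, and the integer $k$ fixed by the inequality $k\cdot 2\pi<\sqrt{-ad}\,T\le (k+1)\cdot 2\pi$. The whole argument therefore collapses to a sign analysis of the three scalar functions $g_1,g_2,g_3$ of Equations~\eqref{eq:g-1-function}, \eqref{eq:g-2-function} and \eqref{eq:g-3-function}, carried out over the parameter region $\Omega_3=(0,1)\times(-\infty,0)$. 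A feature I would exploit repeatedly is that admissibility of the orbit, i.e.\ $\dot\vartheta^2>0$, already forces $\alpha<0$, as recorded in Equation~\eqref{eq:range-of-alpha-hyper}.

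First I would read off from Equation~\eqref{eq:a-b-c-d-hyper} that $a>0$ and $c>0$ on all of $\Omega_3$, and that the sign of $d$ is opposite to that of $g_1$; this yields the partition $\Omega_3=\Omega_3^-\cup\Omega_3^0\cup\Omega_3^+$. On $\Omega_3^-$ one has $d>0$, whence $cd+b^2>0$ because $c>0$, so Equation~\eqref{eq:index-d-positive} returns $\iota_1(\gamma)=0$; on the separatrix $\Omega_3^0$ one has $d=0$, and since again $c>0$, Equations~\eqref{eq:index-d-zero-b-zero} and \eqref{eq:index-for-d-zero-b-nonzero} also give $\iota_1(\gamma)=0$. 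This settles every contribution to the first line of the formula coming from $\Omega_3^-\cup\Omega_3^0$.

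The heart of the proof is the region $\Omega_3^+$, where $d<0$ and Theorem~\ref{thm:main1} chooses between the even value $2k$ and the odd value $2k+1$ according to the sign of $cd+b^2$. Here I would use the simplification of Equation~\eqref{eq:b-square-plus-cd-hyper}, which shows that on $\Omega_3^+$ the sign of $cd+b^2$ equals that of $g_2$, and then observe that $g_2(\xi)>0$: indeed $\alpha<0$ makes the term $-2(\alpha-1)\xi(1+\xi^2)$ positive while $(\xi^4+6\xi^2+1)\ln\!\big((1+\xi)/(1-\xi)\big)$ is manifestly positive on $(0,1)$. The strict positivity of $cd+b^2$ is the decisive structural fact that separates the hyperbolic case from the spherical one: it guarantees a non-trivial Jordan block at the zero eigenvalue and places us uniformly in the even branch $\iota_1(\gamma)=2k$ of Equation~\eqref{eq:index-d-negative-and-zero-not-semisimple}, so that odd indices never occur. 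I expect pinning down this sign uniformly to be the main obstacle, since it is precisely the inequality that collapses the rich spherical picture to the much simpler even-only conclusion.

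It then remains to identify $k$. Substituting $T=2\pi/\dot\vartheta$ together with the value of $\sqrt{-ad}$ from Equation~\eqref{eq:a-b-c-d-hyper}, the timing inequality rewrites as $k<g_3(\xi)\le k+1$, which is the defining condition of the nested subregions $\Omega_{3,k}^+$. Because $\lim_{\xi\to1^-}g_3(\xi)=+\infty$ for every fixed $\alpha<0$ and $g_3$ is continuous, every value $k\in\N$ is realized, so each even index $2k$ is attained on a genuine region $\Omega_{3,k}^+$; this surjectivity is what underlies the existence statement of the hyperbolic main theorem. Collecting the three cases yields the displayed formula for $\iota_1(\gamma)$, and the final identification of $\iota_1(\gamma)$ with the Morse index $m^-(x)$ is provided by Theorem~\ref{lem:relation between morse and maslov-type}, exactly as in the proof of Theorem~\ref{thm:main1-intro}.
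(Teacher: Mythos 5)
Your proposal is correct and follows essentially the same route as the paper: specialize Theorem~\ref{thm:main1} using the explicit constants of Equation~\eqref{eq:a-b-c-d-hyper}, partition $\Omega_3$ by the sign of $g_1$, observe that $g_2>0$ on $\Omega_3^+$ (since $\alpha<0$) forces the even branch $2k$ of Equation~\eqref{eq:index-d-negative-and-zero-not-semisimple}, and read off $k$ from $g_3$ together with $\lim_{\xi\to 1^-}g_3(\xi)=+\infty$. The only (harmless) addition is your explicit closing appeal to Theorem~\ref{lem:relation between morse and maslov-type}, which the paper leaves implicit at this point.
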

As direct consequence of Proposition~\ref{pro:index-of-hyper}, in the special case of $\alpha=-1$  we get the following result. 
\begin{cor}
For  $\alpha=-1$, the Morse index of the circular solution is given by 
\begin{equation}
\iota_1(\gamma)=2k \qquad \textrm{ if }\qquad (\xi_0,-1)\in\Omega_{3,k}^{+},\quad k \in \N^*
\end{equation} 
and moreover, the circular orbit is  linearly unstable.
\end{cor}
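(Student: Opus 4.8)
The plan is to derive both assertions from Proposition~\ref{pro:index-of-hyper} by locating the point $(\xi_0,-1)$ within the parameter regions introduced there, for every admissible radius $\xi_0\in(0,1)$. The choice $\alpha=-1$ respects the admissibility restriction \eqref{eq:range-of-alpha-hyper}, so $(\xi_0,-1)\in\Omega_3$. The crux of the index computation is to show that $g_3(\xi_0)>1$ for \emph{every} $\xi_0\in(0,1)$, with $g_3$ as in \eqref{eq:g-3-function}. This single inequality does double duty: positivity of its radicand certifies $(\xi_0,-1)\in\Omega_3^+$ (hence $d<0$), while its exceeding $1$ rules out the stratum $\Omega_{3,0}^+$ and forces $k\in\N^*$. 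Squaring and clearing the positive denominator $(1-\xi_0^2)^2 G(\xi_0)$, the condition $g_3(\xi_0)^2>1$ becomes $g_1(\xi_0)>(1-\xi_0^2)^2 G(\xi_0)$; using the algebraic identity $(3\xi^4+2\xi^2+3)-(1-\xi^2)^2=2(1+\xi^2)^2$ and specializing $\alpha=-1$, it collapses to the clean transcendental inequality
\[
G(\xi_0)=\ln\left(\frac{1+\xi_0}{1-\xi_0}\right)>\frac{2\xi_0}{1+\xi_0^2}.
\]

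I would establish this inequality by the elementary device of comparing the two sides at $\xi_0=0$, where both vanish, and noting that the derivative of their difference equals $\frac{8\xi_0^2}{(1-\xi_0^2)(1+\xi_0^2)^2}$, which is strictly positive on $(0,1)$; the difference is therefore strictly increasing and hence positive throughout. With $g_3(\xi_0)>1$ in hand, Proposition~\ref{pro:index-of-hyper} immediately delivers $\iota_1(\gamma)=2k$ precisely when $(\xi_0,-1)\in\Omega_{3,k}^+$, $k\in\N^*$.

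For the linear instability I would return to the normal-form analysis of the case $d<0$ in Section~\ref{sec:general-index}. Since $(\xi_0,-1)\in\Omega_3^+$ gives $d<0$, it remains only to confirm that the zero eigenvalue is not semisimple, i.e. that $b^2+cd\neq0$. By the explicit expression \eqref{eq:b-square-plus-cd-hyper} the sign of $b^2+cd$ agrees with that of the function $g_2$ from \eqref{eq:g-2-function}, and for $\alpha=-1$ one has $g_2(\xi_0)=(\xi_0^4+6\xi_0^2+1)G(\xi_0)+4\xi_0(1+\xi_0^2)$, a sum of manifestly positive terms; hence $b^2+cd>0$. The splitting \eqref{eq:splitting-of fundamental-solution-d<0} then applies with a genuine nilpotent block, the factor $\gamma_1(t)$ carrying a nonzero off-diagonal entry $s_0 t$ with $s_0\neq0$, and the resulting linear-in-$t$ growth of the fundamental solution exhibits the linear instability of the circular orbit.

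The two sign computations are routine; the one step deserving care is the reduction of $g_3(\xi_0)>1$ to the displayed transcendental inequality and its verification, since this is exactly what guarantees that the gravitational exponent $\alpha=-1$ never meets the degenerate index-zero stratum $\Omega_{3,0}^+$ and that the integer $k$ is genuinely positive.
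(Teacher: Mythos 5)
Your proposal is correct and follows essentially the same route as the paper: place $(\xi_0,-1)$ in $\Omega_3^+$ so that $d<0$, observe that $g_2>0$ forces $cd+b^2>0$, and conclude via the normal form \eqref{eq:splitting-of fundamental-solution-d<0} that the index is $2k$ and that the non-trivial Jordan block produces linear instability. The one point where you go beyond the paper is worth keeping: the paper merely asserts that $\alpha=-1$ lands in $\Omega_3^+$ and tacitly assumes $k\in\N^*$, whereas your reduction of $g_3(\xi_0)>1$ to the inequality $\ln\bigl(\tfrac{1+\xi_0}{1-\xi_0}\bigr)>\tfrac{2\xi_0}{1+\xi_0^2}$ (via the identity $(3\xi^4+2\xi^2+3)-(1-\xi^2)^2=2(1+\xi^2)^2$ and the monotonicity argument, whose derivative computation checks out) simultaneously certifies $d<0$ and rules out the stratum $\Omega_{3,0}^+$, so it actually substantiates the restriction $k\in\N^*$ that the paper leaves unjustified.
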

\begin{proof}
If $\alpha=-1$, we get that the pair $(\xi_0, \alpha)$ belongs to  $\Omega_3^+$. In particular, it follows that $d<0$. Moreover, by the   previous discussion, the term  $cd+b^2$ is always positive. By taking into account of Equation~\eqref{eq:splitting-of fundamental-solution-d<0}, there always exists a non-trivial Jordan block. In particular, we get that the circular orbit is always linearly unstable. 
\end{proof}

%%%%%%%%%%%%%%%%%%%%%%%%%%%%%%%%%%%%%%%%%%%%%%%%%%%%%%%%%%%%%%%%%%%%
%%
%%
%%
%%
%%
%%
%%%%%%%%%%%%%%%%%%%%%%%%%%%%%%%%%%%%%%%%%%%%%%%%%%%%%%%%%%%%%%%%%%%%

\subsection{Euclidean case}
The last case is provided by the Euclidean one. We start letting 
\begin{align}
&p(\xi)=1&& q(\xi)=\xi^{\alpha}
\end{align}
and by a  direct computation, we get  
\begin{align}
&p'(\xi)=p''(\xi)=0&&q'(\xi)=\alpha \xi^{\alpha-1}&&q''(\xi)=\alpha(\alpha-1)\xi^{\alpha-2}
\end{align}
By Equation~\eqref{eq:simplfied-notations-p-q}, we get 
\begin{align}\label{eq:derivatives-p-and-q-Euclidean}
&p_0=1&& p'_0=0&& p''_0=0\\
&q_0=\xi_0^\alpha &&q'_0=\alpha\xi_0^{\alpha-1} &&
q''_0=\alpha(\alpha-1)\xi_0^{\alpha-2}\\
& \eta_0=\xi_0^2 && \eta_0'=2\xi_0&& \dot \vartheta^2=-\alpha\xi_0^{\alpha-2}
\end{align}

Since the (RHS) of the equation defining $\dot \vartheta^2$ should be positive,  we  only consider the case 
\begin{equation}\label{eq:range-of-alpha-Euclidean}
\alpha<0.
\end{equation}
By  a direct computation, we get
\begin{align}
&\zeta_0=2\xi_0\cdot \sqrt{-\alpha\xi_0^{\alpha-2}}&& 2q_0'\,(\ln \eta_0)'=4\alpha\xi_0^{\alpha-2}&&
\eta_0' \cdot \left[\dfrac{q_0'}{\eta_0'}\right]'=\alpha(\alpha-2)\xi_0^{\alpha-2}.
\end{align}
By this, we get that the four constants appearing at Equation~\eqref{eq:notations-a-b-c-d} are the following
\begin{align}\label{eq:abcd-euclidean}
&a=1&&b=\frac{2}{\xi_0}\sqrt{-\alpha\xi_0^{\alpha-2}}\\
&c=\frac{1}{\xi_0^2}&& d=\alpha(\alpha+2)\xi_0^{\alpha-2}
\end{align}
Now, we observe that 
\begin{equation}
d \textrm{ is}: \begin{cases}
\textrm{positive} &\textrm{ if }\quad \alpha<-2\\
\textrm{null} &\textrm{ if }\quad \alpha=-2\\
\textrm{negative} &\textrm{ if }\quad \alpha>-2.
\end{cases}
\end{equation}
Being $c>0$,  using Equation~\eqref{eq:index-d-positive},  Equation~\eqref{eq:index-d-zero-b-zero} and finally Equation~\eqref{eq:index-for-d-zero-b-nonzero} we get that 
\begin{equation}
\iota_1(\gamma)=0 \quad \text{if}\quad \alpha\le -2.
\end{equation}
If $-2<\alpha<0$,  we need to determine the sign of the term $b^2+cd$. By a direct computing we get 
\begin{equation}
b^2+cd=\alpha(\alpha-2)\xi_0^{\alpha-4}>0
\end{equation}
and  taking into account Equation~\eqref{eq:derivatives-p-and-q-Euclidean} and Equation~\eqref{eq:abcd-euclidean} we finally get 
\begin{align}
&T=\dfrac{2\pi}{\dot{\vartheta}}=2\pi\sqrt{\dfrac{1}{-\alpha\xi_0^{\alpha-2}}}&&
\sqrt{-ad}=\sqrt{-\alpha(\alpha+2)\xi_0^{\alpha-2}}.
\end{align}
Therefore
\begin{equation}
\sqrt{-ad}\cdot T=2\pi\cdot \sqrt{\alpha+2}
\end{equation}
and using Equation~\eqref{eq:index-d-negative-and-zero-not-semisimple}, we get that  $k=0$ if $\alpha\in(-2,-1]$ and  $k=1$ if $\alpha\in(-1,0)$ and we  have
\begin{equation}
\iota_1(\gamma)=\begin{cases}
0 &\textrm{ if }\quad \alpha\in(-2,-1]\\
2  &\textrm{ if }\quad \alpha\in (-1,0)
\end{cases}
\end{equation}

\begin{thm}\label{pro:index-of-euclidean}
	Under above notations, the Morse index  of a circular orbit in the  Euclidean plane  is given by
	\begin{equation}
	\iota_1(\gamma)=\begin{cases}
	0 &\textrm{ if }\quad \alpha\in(-\infty,-1]\\
	2  &\textrm{ if}\quad \alpha\in (-1,0)
	\end{cases} 
	\end{equation}	  
\end{thm}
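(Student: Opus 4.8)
The plan is to consolidate the case analysis already carried out for the Euclidean coefficients \eqref{eq:abcd-euclidean}, where $a=1$, $b=\tfrac{2}{\xi_0}\sqrt{-\alpha\xi_0^{\alpha-2}}$, $c=\xi_0^{-2}$ and $d=\alpha(\alpha+2)\xi_0^{\alpha-2}$. The entire computation is governed by the sign of $d$, which (since $\xi_0>0$ and $\alpha<0$) coincides with the sign of $\alpha+2$. I would therefore partition the admissible range $\alpha\in(-\infty,0)$ into the three pieces $\alpha<-2$, $\alpha=-2$ and $-2<\alpha<0$, and feed each into the appropriate branch of Theorem~\ref{thm:main1}.

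First I would dispose of the region $\alpha\le-2$. For $\alpha<-2$ we have $d>0$, and since $c=\xi_0^{-2}>0$ the quantity $cd+b^2$ is strictly positive; Equation~\eqref{eq:index-d-positive} then yields $\iota_1(\gamma)=0$. For the boundary value $\alpha=-2$ we have $d=0$ while $b\neq0$ (because $-\alpha>0$), so the $d=0$, $b\neq0$ branch, Equation~\eqref{eq:index-for-d-zero-b-nonzero}, again gives $\iota_1(\gamma)=0$. Thus the index vanishes on all of $(-\infty,-2]$.

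The substantive case is $-2<\alpha<0$, where $d<0$. Here I would first record that $b^2+cd=\alpha(\alpha-2)\xi_0^{\alpha-4}>0$, since both factors $\alpha$ and $\alpha-2$ are negative; this places us in the $d<0$, $cd+b^2\ge0$ branch of Theorem~\ref{thm:main1}, for which $\iota_1(\gamma)=2k$. The only remaining task is to pin down the integer $k$, defined by $k\cdot2\pi<\sqrt{-ad}\cdot T\le(k+1)\cdot2\pi$. Using $T=2\pi/\dot\vartheta$ together with the explicit coefficients, one arrives at the clean identity $\sqrt{-ad}\cdot T=2\pi\sqrt{\alpha+2}$, so the defining inequality for $k$ collapses to $k<\sqrt{\alpha+2}\le k+1$.

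The last step, and the only place where any care is needed, is tracking $k$ across the threshold $\sqrt{\alpha+2}=1$, i.e.\ $\alpha=-1$. For $\alpha\in(-2,-1]$ one has $\alpha+2\in(0,1]$ and hence $\sqrt{\alpha+2}\in(0,1]$, forcing $k=0$ and $\iota_1(\gamma)=0$; for $\alpha\in(-1,0)$ one has $\alpha+2\in(1,2)$, hence $\sqrt{\alpha+2}\in(1,\sqrt2)\subset(1,2)$, forcing $k=1$ and $\iota_1(\gamma)=2$. Merging the vanishing-index intervals $(-\infty,-2]$ and $(-2,-1]$ into the single interval $(-\infty,-1]$ then produces the stated dichotomy and completes the argument. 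No genuine obstacle arises: the only subtlety is the bookkeeping of the jump of $k$ at $\alpha=-1$, together with the verification that the sign conditions on $d$ and on $cd+b^2$ never steer us into the exceptional $\iota_1=-1$ branch of Theorem~\ref{thm:main1}.
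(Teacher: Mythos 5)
Your proposal is correct and follows essentially the same route as the paper: identical coefficients from \eqref{eq:abcd-euclidean}, the same sign analysis of $d$ and of $b^2+cd=\alpha(\alpha-2)\xi_0^{\alpha-4}$, the same identity $\sqrt{-ad}\cdot T=2\pi\sqrt{\alpha+2}$, and the same determination of $k$ at the threshold $\alpha=-1$. No discrepancies to report.
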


By Proposition \ref{pro:index-of-euclidean}  we have the following direct corollary.
\begin{cor}
	For  $\alpha=-1$, we get
	\begin{equation}
	\iota_1(\gamma)=0
	\end{equation} 
and the corresponding circular orbit is  linearly unstable.
\end{cor}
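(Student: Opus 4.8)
The plan is to dispatch the two assertions separately, since both reduce to specializing material already established. The index equality $\iota_1(\gamma)=0$ requires no new work at all: since $\alpha=-1$ lies in the range $(-\infty,-1]$, Proposition~\ref{pro:index-of-euclidean} applied at this value gives $\iota_1(\gamma)=0$ immediately. So the entire content of the corollary lies in the linear instability statement, and this is where I would focus.

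For the instability, the strategy is to read off the Jordan structure of the Hamiltonian matrix $A$ from the explicit constants at $\alpha=-1$. Using the formulas in Equation~\eqref{eq:abcd-euclidean}, I would first record that
\[
d=\alpha(\alpha+2)\xi_0^{\alpha-2}\Big|_{\alpha=-1}=-\xi_0^{-3}<0,
\]
so we are in the first case ($d$ negative) of the analysis carried out in Section~\ref{sec:general-index}. The decisive quantity is then $b^2+cd$, which from the same block of formulas equals $\alpha(\alpha-2)\xi_0^{\alpha-4}$; evaluated at $\alpha=-1$ this is $3\xi_0^{-5}>0$, and in particular it is nonzero. This single sign computation places us squarely in the \textbf{non-trivial Jordan block} subcase rather than in the semisimple (neutrally stable) one.

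Once that branch is identified, the instability is a structural consequence of the normal form already derived. Because $b^2+cd\neq 0$, the eigenvalue $0$ of $A$ is not semisimple, and the fundamental solution decomposes as in Equation~\eqref{eq:splitting-of fundamental-solution-d<0}, namely $\gamma(t)=S\big(\gamma_1(t)\diamond\gamma_2(t)\big)S^{-1}$ with
\[
\gamma_1(t)=\begin{bmatrix}1&s_0t\\0&1\end{bmatrix},\qquad s_0\neq 0,
\]
and $\gamma_2$ a rotation. The off-diagonal secular term $s_0 t$ forces $\|\gamma(t)\|$ to grow linearly, hence without bound, as $t\to+\infty$; this is precisely linear instability of the circular orbit. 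I would present this exactly as in the analogous sphere and hyperbolic corollaries, which conclude instability from the existence of a nontrivial Jordan block.

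The main (and essentially only) obstacle is the verification that $b^2+cd\neq 0$ at $\alpha=-1$, since a vanishing value would instead yield the trivial-Jordan-block case \eqref{eq:splitting-of fundamental-solution-d<0-simple} and neutral linear stability. As the computation above shows, this reduces to the one-line observation that $\alpha(\alpha-2)=3>0$ at $\alpha=-1$, so there is no genuine difficulty: the corollary is a direct specialization of Proposition~\ref{pro:index-of-euclidean} together with a single sign check feeding into the normal-form dichotomy of Section~\ref{sec:general-index}.
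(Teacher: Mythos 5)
Your proposal is correct and follows essentially the same route as the paper: the index value is read off from Theorem~\ref{pro:index-of-euclidean}, and instability follows from checking $d<0$ and $b^2+cd=\alpha(\alpha-2)\xi_0^{\alpha-4}=3\xi_0^{-5}>0$ at $\alpha=-1$, which places the system in the non-trivial Jordan block case of Equation~\eqref{eq:splitting-of fundamental-solution-d<0}. Your added explanation of why the secular term $s_0t$ forces linear growth of $\|\gamma(t)\|$ is a welcome elaboration of the paper's terser conclusion, but it is the same argument.
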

\begin{proof}
	If $\alpha=-1$, then  we have $d<0$  and  $cd+b^2> 0$. By Equation~\eqref{eq:splitting-of fundamental-solution-d<0} there exists a nontrivial Jordan block. So, it is linearly unstable concluding the proof.
\end{proof}

%%%%%%%%%%%%%%%%%%%%%%%%%%%%%%%%%%%%%%%%%%%%%%%%%%%%%%%%%%%%%%%%%%%%
%%
%%
%%
%%
%%
%%
%%%%%%%%%%%%%%%%%%%%%%%%%%%%%%%%%%%%%%%%%%%%%%%%%%%%%%%%%%%%%%%%%%%%

\appendix

\section{A brief recap on the Maslov index}\label{sec:preliminary}

The aim of this section is to briefly recall the basic definitions, properties and main results that has beed used throughout the paper. Our basic references are \cite{Arn67, CLM94} and \cite{RS93}. 

\subsection{The Lagrangian Grassmannian and the Maslov  index}
We start by considering the Lagrangian Grassmannian manifold $Lag(2n)$, which is  the smooth manifold of all Lagrangian subspaces of the standard real symplectic space $(\R^{2n},\omega)$. For every $\Lambda_0\in Lag(2n)$, we define
\begin{equation}
\Omega^k(\Lambda_0):=\Set{\Lambda\in Lag(2n)|\dim(\Lambda\cap \Lambda_0)=k} \qquad  k=0,1,\ldots,n.
\end{equation}
We observe that $Lag(2n)=\bigcup^n_{k=0}\Omega^k(\Lambda_0)$. It's well known that $\Omega^k(\Lambda_0)$ is a connected embedded submanifold of $Lag(2n)$ of codimension $k(k+1)/2$ and, in particular, $\Omega^1(\Lambda_0)$ has codimension $1$. 

We denote by $\Sigma(\Lambda_0)$ the {\em Maslov cycle} also called by Arnol'd
in \cite{Arn67}  as the {\em train of $\Lambda_0$} and  defined by 
\[
\Sigma(\Lambda_0)=\bigcup^n_{k=1}\Omega^k(\Lambda_0).
\]
It is well-known that $\Omega^0(\Lambda_0)$ is open and dense in $Lag(2n)$ and that 
the Maslov cycle $\Sigma(\Lambda_0)$ is the closure  of $\Omega^1(\Lambda_0)$. Moreover,  $\Omega^1(\Lambda_0)$ is co-oriented in $Lag(2n)$ meaning that there exists a transverse orientation. More  precisely, for every $\Lambda\in\Omega^1(\Lambda_0)$, the smooth Lagrangian path $\Lambda(t)=\{e^{tJ}\Lambda, t\in[-\varepsilon,\varepsilon]\}$ crosses $\Omega^1(\Lambda_0)$ transversally for sufficiently small $\varepsilon>0$. The desired transverse positive orientation is defined by  the path $t \mapsto\Lambda(t)$ as $t$ runs from $-\varepsilon$ to $\varepsilon$. So,  $\Sigma(\Lambda_0)$ is two-sidedly embedded in $Lag(2n)$. By means of such an orientation, it is possible to define an intersection index between  a generic continuous path of Lagrangian subspaces and $\Sigma(\Lambda_0)$.
 \begin{defn}\label{def:definition of maslov index}
Let $t \mapsto\Lambda(t)$ be a continuous path in $Lag(2n)$ and let $\Lambda_{0}\in Lag(2n)$.  Then the {\em Maslov index} is defined as
\begin{equation}
\iCLM(\Lambda_{0},\Lambda(t)):=[e^{-\varepsilon J}\Lambda(t): \Sigma(\Lambda_{0})]
\end{equation}
where the right-hand side means the intersection number and $\varepsilon>0$ is sufficiently small.
\end{defn}
The Maslov index has several  properties which can be used in efficient way in its computation.  Here we just recall a couple of them that will be used in the paper. 
\begin{enumerate}
 \item[]{$\mathbf{Property \ I}$} {\em (Path additivity)} Let $c \in (a,b)$. Then, we 
 have \[
\iCLM(\Lambda_{0},\Lambda(t), t \in [a,b])=\iCLM(\Lambda_{0},\Lambda(t), t \in [a,c])+\iCLM(\Lambda_{0},\Lambda(t), t \in [c,b]).
\]
 \item[]{$\mathbf{Property \ II}$} {\em (Homotopy invariance with respect to ending points)} For a continuous family of Lagrangian
paths $\{\Lambda(s,t), s\in [0,1], t\in[a, b]\}$ such that $\dim (\Lambda(s, a)\cap \Lambda_0)$ and $\dim(\Lambda(s, b)\cap \Lambda_0)$ are constants, then
$\iCLM(\Lambda_0,\Lambda(0, t)) = \iCLM(\Lambda_0,\Lambda(1,t))$.
\end{enumerate}
Robbin and Salamon in \cite{RS93} provides an efficient formula for computing the Maslov index of a smooth path of Lagrangian subspaces by means  of the local contribution at each crossing instant. Let us consider the $\mathscr C^1$-path of  Lagrangian subspaces $\Lambda(t)(t\in[a,b])$. The instant $t_{0}$ is termed a {\em crossing instant} if $\Lambda(t_{0})\cap \Lambda_{0}\neq{0}$. 

Let $v$ be any vector in $\Lambda(t_{0})\cap \Lambda_{0}$ and $V_{t_{0}}$ be a fixed Lagrangian subspace which is transversal to $\Lambda(t_{0})$. For small $t$, the crossing form is defined by
\begin{equation}
\Gamma(\Lambda(t_{0}),\Lambda_{0},t_{0})(v)=\dfrac{d}{dt}\mid_{t=t_{0}}\omega(v,u(t))
\end{equation}
where $u(t)\in V_{t_{0}}$ such that $v+u(t)\in\Lambda(t)$. It is easy to check that this construction doesn't depend on  the choice of $V_{t_{0}}$. In the special case of Lagrangian path induced by a symplectic one by means of the transitive action of the symplectic group on the Lagrangian Grassmannian  $t\mapsto \Lambda(t):=\gamma(t)W$, where $\gamma(t)\in \Sp(2n)$ and $W$ is a fixed Lagrangian subspace, then the crossing form can be explicitly written by 
 \[
 \langle-\gamma(t)^{T}J\dot{\gamma}(t)v,v\rangle \textrm{  for } v\in \gamma(t)^{-1}(\Lambda(t)\cap W)
 \]
 or equivalently $\langle-J\dot{\gamma}(t)\gamma(t)^{-1}u,u\rangle$ for $u\in \Lambda(t)\cap W$ where $\langle\cdot,\cdot\rangle$
 stands for the standard inner product in $\R^{2n}$ .

A crossing is called {\em regular} if the crossing form is non-degenerate and a path is regular if every crossing is regular. Now, given a regular $\mathscr C^1$-path of Lagrangian subspaces, the following formula holds:
 \begin{equation}\label{eq:maslov index formula}
\iCLM(\Lambda_{0},\Lambda(t), t \in [a,b])=m^{+}(\Gamma(\Lambda(0),\Lambda_{0},a))+\sum_{t\in (a,b)} \sgn(\Gamma(\Lambda(t),\Lambda_{0},t))-
m^{-}(\Gamma(\Lambda(T),\Lambda_{0},b)),
\end{equation}
where the sum runs over the set of all crossings and $m^{+}$ (resp. $m^{-}$) denotes  the dimension of positive (resp.  negative) spectral subspace.
\paragraph{A special symplectic path.} Let us consider the  symplectic path 
\[
\gamma(t):=\begin{bmatrix}
M_{11}(t)&0\\M_{21}(t)&M_{22}(t)
\end{bmatrix}, t\in[a,b].
\]
In this case, there is a very useful formula for computing the Maslov index  of the Lagrangian path $t\mapsto \Lambda(t)$ pointwise defined through its graph. We refer the interested reader to \cite[Theorem 2.2]{Zhu06}. 

Let $V$ be a subspace of $\C^{2n}$ and we define
\begin{multline}\label{eq:app1}
V^I=\Set{x\in \C^{2n}|\omega(x,y)=0 \ \forall y\in V} \\
 W_I(V)=\Set{\trasp{(x,u,y,v)}\in\C^{4n}\ |\ \trasp{(x,y)}\in V^J, \trasp{(u,v)}\in V}.
\end{multline}
Then, we have 
\begin{equation}\label{eq:zhu's formula to compute maslov index}
\begin{aligned}
&\iCLM(W_I(V),Gr(\gamma(t)), t \in [a,b])\\
&=m^+(M_{1,1}(T)^*M_{2,1}(T)|_{S(T)})-m^+(M_{1,1}(0)^*M_{2,1}(0)|_{S(0)})+\dim S(0)-\dim S(T),
\end{aligned}
\end{equation}
where $m^+$ denotes the Morse positive index and $S(t)=\{x\in\C^n\ | \ \trasp{(x,M_{1,1}x)}\in V^I\}$. \footnote{This formula directly follows from   \cite[Theorem 2.2]{Zhu06} once setting  $K$ and $R$ as $I$ and $V$, respectively.}

%%%%%%%%%%%%%%%%%%%%%%%%%%%%%%%%%%%%%%%%%%%%%%%%%%%%%%%%%%%%%%%%%%%%
%%
%%
%%
%%
%%
%%
%%%%%%%%%%%%%%%%%%%%%%%%%%%%%%%%%%%%%%%%%%%%%%%%%%%%%%%%%%%%%%%%%%%%

\subsection{The generalized Conley-Zehnder index}

This section is devoted to recall the basic definitions, properties about the {\em generalized Conley-Zehnder index} that will be used throughout the paper. The basic references are \cite{HS09, LZ00a, LZ00b, Lon02}.

We start defining the one-codimensional subnmanifold 
\[ 
 \Sp(2n, \R)^{0}\=\{M\in \Sp(2n, \R)| \det(M-I_{2n})=0\}
\] 
of the Lie group $ Sp(2n, \R) $ and we let 
\[
\Sp(2n,\R)^{*}=\Sp(2n,\R)\backslash \Sp(2n, \R)^{0}=\Sp(2n,
\R)^{+}\bigcup
\Sp(2n, \R)^{-}
\]
where 
\begin{multline}
\Sp(2n,\R)=\{M\in \Sp(2n,\R)| \det(M-I_{2n})>0\} \textrm{ and }\\
\Sp(2n,\R)^{-}
=\{M\in \Sp(2n,\R)
|\det(M-I_{2n})<0\}.
\end{multline}
 For any $M\in \Sp(2n,\R)^{0}$ it can be proved that the manifold $\Sp(2n,\R)^{0}$ is
transversally oriented at any  point $M$ by choosing  as positive direction the one  determined by
 \[
 \dfrac{d}{dt}Me^{tJ}|_{t=0} \textrm{  with } t\geq0
 \]
 sufficiently small. 
 
 Now, we recall a useful binary operation usually referred to $\diamond$product of two matrices. Given  two $2m_{k}\times 2m_{k}$ matrices with the block form
$M_{k}=\begin{bmatrix} A_{k}&B_{k}\\C_{k}&D_{k}\end{bmatrix}$ with $k=1,2$, we
define the
 $\diamond$-product of $M_{1}$ and $M_{2}$ in the following way:
\[
M_{1}\diamond M_{2}\=\begin{bmatrix}
A_{1}&0&B_{1}&0\\0&A_{2}&0&B_{2}\\C_{1}&0&D_{1}&0\\0&C_{2}&0&D_{2}
\end{bmatrix}.
\]
\begin{defn}\cite[Definition 1,Pag.36 and Definition 5,Pag.38]{Lon02}\label{def:homotopy component} For every symplectic matrix $M\in\Sp(2n,\R)$ and $\omega\in \U$, we define 
\begin{equation}
\nu_\omega(M)=\dim_{\C}\ker_{\C}(M-\omega I_n)
\end{equation}
\begin{equation}
\Omega(M)=\{N\in\Sp(2n,\R)\ | \ \sigma(N)\cap\U=\sigma(M)\cap\U \textrm{ and } \nu_\omega(N)=\nu_\omega(M) \ \forall \ \omega\in\sigma(M)\cap\U \}. 
\end{equation}
 and we denote by $\Omega^0(M)$ the path connected component of $\Omega(M)$ which contains $M$. We refer to as the homotopy component of $M$ in $\Sp(2n,\R)$.
\end{defn}
We list below the basic normal forms corresponding to eigenvalues in $\U$ \cite[Definition 9, Pag.41]{Lon02}:
\begin{equation}\label{eq:basic normal forms}
\begin{aligned}
N_1(\lambda,b)&=\begin{bmatrix}
\lambda&b\\0&\lambda
\end{bmatrix},\ \lambda=\pm1, b=\pm1,0,\\
R(\vartheta)&=\begin{bmatrix}
\cos\vartheta&-\sin\vartheta\\\sin\vartheta&\cos\vartheta
\end{bmatrix}, \vartheta\in(0,\pi)\bigcup(\pi,2\pi),\\
N_2(\lambda,b)&=\begin{bmatrix}
R(\vartheta)&b\\0&R(\vartheta)
\end{bmatrix},\lambda=e^{i\vartheta}\ \text{and}\ \vartheta\in(0,\pi)\bigcup(\pi,2\pi),\\
b&=\begin{bmatrix}
b_1&b_2\\b_3&b_4
\end{bmatrix},b_j\in\R,b_2\neq b_3.
\end{aligned}
\end{equation}
The following result  will be used for investigating the stability problem.
\begin{lem}\cite[Theorem 10, Pag.41]{Lon02}\label{lem:homotopy component}
	For every symplectic matrix $M\in\Sp(2n,\R)$, there is a path $f:[0,1]\to \Omega^0(M)$ such that $f(0)=M$ and 
	\begin{equation}
	f(1)=M_1(\omega_1)\diamond\cdots\diamond M_k(\omega_k)\diamond M_0,
	\end{equation}
where each $M_j(\omega_j)$ is a basic normal form corresponding to some eigenvalue $\omega_j\in\U$ for $1\le j\le k$ and the symplectic matrix $M_0$ satisfies $\sigma(M_0)\cap \U=\emptyset$.
\end{lem}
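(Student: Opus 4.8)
The plan is to reduce $M$ to its normal form block by block, after splitting $\R^{2n}$ into symplectically orthogonal $M$-invariant subspaces indexed by the spectrum of $M$. The guiding principle throughout is that a continuous path stays inside the homotopy component $\Omega^0(M)$ of Definition~\ref{def:homotopy component} exactly when it never moves an eigenvalue onto or off of $\U$ and never changes any of the geometric multiplicities $\nu_\omega(\cdot)=\dim_\C\ker_\C(\cdot-\omega I)$ for $\omega\in\sigma(M)\cap\U$; every deformation I build must respect this constraint. As a first step I would invoke the symplectic spectral splitting: since $M$ is real symplectic, its spectrum is invariant under $\lambda\mapsto\lambda^{-1}$ and under complex conjugation, and the symplectic form pairs the generalized eigenspace of $\lambda$ nondegenerately with that of $\lambda^{-1}$. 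Grouping the real generalized eigenspaces accordingly yields an $M$-invariant, symplectically orthogonal splitting $\R^{2n}=V_{\U}\oplus V_{0}$, where $V_{\U}$ collects the eigenvalues lying on $\U$ and $V_{0}$ those lying off $\U$ (the latter occurring in reciprocal real pairs or in quadruples $\lambda,\bar\lambda,\lambda^{-1},\bar\lambda^{-1}$). Restricting further, $V_{\U}$ decomposes into symplectically orthogonal blocks, one for each distinct $\omega\in\sigma(M)\cap\U$, and after a symplectic reordering of the basis $M$ becomes a $\diamond$-product of its restrictions to these blocks.

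On the hyperbolic block $V_{0}$ the restriction $M|_{V_0}\in\Sp(2n_0,\R)$ has no eigenvalue on $\U$, so any homotopy within the symplectic group of that block automatically keeps $\sigma\cap\U=\emptyset$ and leaves the (empty) list of $\nu_\omega$ untouched; I may therefore simply set $M_0:=M|_{V_0}$. On each $\omega$-block I would appeal to the symplectic normal-form classification (Krein theory): the restriction is conjugate, by a real symplectic change of basis, to a $\diamond$-product of the basic normal forms $N_1(\lambda,b)$ with $\lambda=\pm1$ (for real unit eigenvalues), $R(\vartheta)$, and $N_2(\lambda,b)$ with $\lambda=e^{i\vartheta}$ (for genuinely complex unit eigenvalues) listed in \eqref{eq:basic normal forms}; the discrete parameters $b=\pm1,0$ and the $2\times2$ block in $N_2$ encode the Krein signature of the eigenvalue and the sizes of the Jordan blocks. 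A symplectic conjugation $M\mapsto S_tMS_t^{-1}$ is realized by a path $S_t$ from $I$ to the conjugating matrix $S$ inside the path-connected group $\Sp(2n,\R)$, and since conjugation preserves the spectrum and all the multiplicities $\nu_\omega$, this path lies in $\Omega^0(M)$.

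After conjugation, any residual higher normal forms still have to be simplified to the basic ones; here I would deform the off-diagonal entries and the Jordan structure while holding each $\omega$ and its multiplicity $\nu_\omega$ fixed, so that the path remains in $\Omega(M)$. Concatenating the paths produced on $V_0$ and on the individual $\omega$-blocks, and transporting everything back through the symplectic splitting of the first step (again a conjugation by a path from $I$ to the fixed basis-change matrix, which is harmless by the same multiplicity argument), yields the desired $f\colon[0,1]\to\Omega^0(M)$ with $f(0)=M$ and $f(1)=M_1(\omega_1)\diamond\cdots\diamond M_k(\omega_k)\diamond M_0$.

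The hard part is the construction on the unit-circle blocks: producing, for each $\omega\in\sigma(M)\cap\U$, a homotopy that reaches the basic normal form without ever letting an eigenvalue leave $\U$ or collide in a way that alters a geometric multiplicity $\nu_\omega$. This is precisely where the Krein signature enters as a homotopy invariant, since two $\omega$-blocks sharing the same $\nu_\omega$ but carrying opposite Krein signs belong to genuinely different components; consequently the normal-form parameters $b$ must be pinned down by signature computations, and the deforming path must be engineered so that all eigenvalue crossings stay on $\U$ with the correct transversality. Verifying that the constructed path never leaves $\Omega(M)$ is the delicate bookkeeping carried out in full in \cite{Lon02}, and I would regard it as the principal obstacle of the proof.
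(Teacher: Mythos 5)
This lemma is not proved in the paper at all: it is quoted verbatim from \cite[Theorem 10, p.~41]{Lon02} and used as a black box, so there is no internal argument to compare yours against. Judged on its own terms, your outline reproduces the standard skeleton of Long's argument (symplectic spectral splitting into $V_\U\oplus V_0$, $\diamond$-decomposition over the distinct eigenvalues on $\U$, conjugation along a path in the connected group $\Sp(2n,\R)$, Krein signature as the invariant pinning down the parameters $b$), and it correctly identifies where the difficulty sits. One small caveat on the splitting itself: for a real symplectic matrix the generalized eigenspaces of $\omega$ and $\bar\omega$ must be grouped together to obtain real symplectic blocks, so the blocks are indexed by conjugate pairs on $\U$ rather than by single eigenvalues.

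The genuine gap is at the decisive step, and it is twofold. First, your claim that the restriction of $M$ to each unit-circle block is \emph{symplectically conjugate} to a $\diamond$-product of the basic normal forms \eqref{eq:basic normal forms} is false in general: those forms only realize Jordan structures of bounded size (e.g.\ $N_1(\pm1,b)$ is $2\times2$ and $N_2$ has two $2\times2$ Rotation blocks), whereas $M$ may carry larger Jordan blocks at an eigenvalue on $\U$. Reaching the basic forms therefore requires a genuine homotopy that changes the Jordan structure while preserving $\sigma(\cdot)\cap\U$ and every $\nu_\omega$ --- which is exactly why the theorem asserts the existence of a path in $\Omega^0(M)$ rather than a conjugacy. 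Second, that homotopy is precisely the content of the theorem, and your proposal does not construct it: the sentence ``I would deform the off-diagonal entries and the Jordan structure while holding each $\omega$ and its multiplicity fixed'' asserts the conclusion, and the final paragraph explicitly defers the verification to \cite{Lon02}. One must exhibit explicit deformations (Long does this via a case analysis on the possible normal forms over $\U$ and their Krein types) and check at each stage that no eigenvalue leaves the circle and no geometric multiplicity jumps. Without that construction the argument is an accurate table of contents for the proof rather than a proof. Given that the paper itself only cites the result, this is an acceptable level of detail for the present context, but it should not be mistaken for a self-contained derivation.
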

 We denote the set of all symplectic paths $\gamma:[0,T]\to\Sp(2n,\R)$ such that $\gamma(0)=I_{2n}$ by  $\MP_T(2n)$. Following author in \cite{Lon02}, we recall
the following definition.
 \begin{defn}\label{def:Maslov-type index}
Let $\gamma\in\MP_T(2n)$. Then there exists an $\varepsilon>0$ such that $\psi(a)e^{J\vartheta}$ and $\psi(b)e^{J\vartheta}$ are not in $\Sp(2n,\R)^0$ for every $\vartheta\in(-\varepsilon,\varepsilon)\setminus\{0\}$. We define
\begin{equation}
\iota_1(\psi)\=[e^{-J\varepsilon}\psi:\Sp(2n,\R)^{0}],
\end{equation}
where the (RHS) denotes the intersection number between the perturbed path
$t\mapsto e^{-J\varepsilon} \psi(t)$
with the singular cycle $\Sp(2n,\R)^0$. We refer to $\iota_1$ as the {\em generalized Conley-Zehnder index}.
\end{defn}

The generalizer Conley-Zehnder index has several properties. Below we list a couple of them that will be useful in the paper and we refer the interested reader to  \cite[Corollary 6.2.5, Theorem 6.2.6]{Lon02}, for further details. 
\begin{lem}\label{lem:property of maslov-type index}
	\begin{enumerate}
		\item
		Given a symplectic path $\gamma:[0,T]\to\Sp(2n,\R)$ such that $\gamma(0)=I_{2n}$ and a matrix $M\in\Sp(2n,\R)$ we define $\beta(t) =
		M^{-1}\gamma(t)M$. Then we have 
		\[
		\iota_1(\gamma)=\iota_1(\beta).
		\]
			\item  For $j=1,2$ and symplectic path $\gamma_j:[0,T]\to\Sp(2n,\R)$ such that $\gamma_j(0)=I_{2n}$, then we have
		\begin{equation}
		\iota_1(\gamma_1\diamond\gamma_2)=\iota_1(\gamma_1)+\iota_1(\gamma_2).
		\end{equation}
	\end{enumerate}
\end{lem}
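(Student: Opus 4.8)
The plan is to derive both properties directly from the intersection-number definition of $\iota_1$ given in Definition~\ref{def:Maslov-type index}, exploiting two structural features of the singular cycle $\Sp(2n,\R)^0$: its invariance, together with its transverse co-orientation, under symplectic conjugation, and its multiplicative behaviour under the $\diamond$-product. The basic computational input in both cases is the elementary identity $\det\big(S^{-1}NS-\Id\big)=\det(N-\Id)$ for $S,N\in\Sp(2n,\R)$, which shows at once that conjugation preserves $\Sp(2n,\R)^0$ and preserves the nullity $\nu_1(N)=\dim\ker(N-\Id)$.

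For the first assertion I would argue by homotopy. Since $\Sp(2n,\R)$ is connected, I can choose a continuous path $s\mapsto M_s$ in $\Sp(2n,\R)$ with $M_0=\Id$ and $M_1=M$, and form the two-parameter family $\beta_s(t)\=M_s^{-1}\gamma(t)M_s$, continuous in $(s,t)$. Two facts then hold for every $s$: first, $\beta_s(0)=M_s^{-1}\gamma(0)M_s=\Id$, so each $\beta_s$ is admissible in the sense of Definition~\ref{def:Maslov-type index}; second, since $\beta_s(T)-\Id=M_s^{-1}\big(\gamma(T)-\Id\big)M_s$, the endpoint nullity $\nu_1(\beta_s(T))$ is independent of $s$. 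Thus $s\mapsto\beta_s$ is a homotopy of admissible paths with fixed starting point $\Id$ and constant endpoint nullity, and the homotopy invariance of the intersection number defining $\iota_1$ (see \cite{Lon02}) forces $s\mapsto\iota_1(\beta_s)$ to be constant. Evaluating at the two ends gives $\iota_1(\gamma)=\iota_1(\beta_0)=\iota_1(\beta_1)=\iota_1(\beta)$.

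For the $\diamond$-additivity I would exploit that the standard complex structure itself splits, $J=J\diamond J$ in the coordinate ordering built into the $\diamond$-product, so that $\diamond$ is multiplicative and $e^{-J\varepsilon}(\gamma_1\diamond\gamma_2)=(e^{-J\varepsilon}\gamma_1)\diamond(e^{-J\varepsilon}\gamma_2)$; after reordering, $\diamond$ is genuine block-diagonal concatenation. Choosing a single $\varepsilon>0$ admissible for both endpoints $\gamma_1(T)$ and $\gamma_2(T)$ simultaneously, the multiplicativity $\det\big((N_1\diamond N_2)-\Id\big)=\det(N_1-\Id)\det(N_2-\Id)$ shows that the perturbed path $e^{-J\varepsilon}(\gamma_1\diamond\gamma_2)$ meets the cycle $\Sp(2(m_1+m_2),\R)^0$ exactly when one of the factors $e^{-J\varepsilon}\gamma_j$ meets its own cycle. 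It then remains to match local intersection signs: at such a crossing the positive transverse direction $\tfrac{d}{dt}Ne^{tJ}\vert_{t=0}=NJ$ also splits as $(N_1J)\diamond(N_2J)$, so the component transverse to the cycle lies entirely in the factor that is crossing. Hence each crossing of $\gamma_1\diamond\gamma_2$ contributes exactly the sign it carries in the corresponding factor, and summing the signed intersections yields $\iota_1(\gamma_1\diamond\gamma_2)=\iota_1(\gamma_1)+\iota_1(\gamma_2)$.

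I expect the main obstacle to be the bookkeeping at endpoints and crossings rather than the conceptual skeleton. For the first property the delicate point is that the defining perturbation $e^{-J\varepsilon}$ acts on the \emph{left} and does not conjugate along with $\gamma$; one must verify that, for a uniform choice of $\varepsilon$, the perturbed endpoints $e^{-J\varepsilon}\beta_s(0)$ and $e^{-J\varepsilon}\beta_s(T)$ remain off $\Sp(2n,\R)^0$ for all $s$, which is precisely what the constant-nullity condition guarantees and is the crux of invoking homotopy invariance. For the second property the genuine work is checking that the transverse co-orientation of $\Sp^0$ pulls back correctly under the $\diamond$-embedding, so that the sign computation reduces cleanly to a single factor; this is where the compatibility $J=J\diamond J$ and the factorization of $\det(\,\cdot\,-\Id)$ do the essential work.
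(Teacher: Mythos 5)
Your proposal is correct in substance, but it necessarily takes a different route from the paper, because the paper does not prove this lemma at all: both properties are quoted from Long's book, with an explicit pointer to \cite[Corollary 6.2.5, Theorem 6.2.6]{Lon02}, so there is no in-paper argument to match. Your proof of (1) --- joining $M$ to $\Id$ by a path $M_s$ in the connected group $\Sp(2n,\R)$, noting $\beta_s(0)=\Id$ and that $\nu_1(\beta_s(T))=\dim\ker\big(\gamma(T)-\Id\big)$ is independent of $s$, then invoking homotopy invariance --- is exactly the standard argument, and it can even be closed using only the paper's own appendix: by Lemma~\ref{lem:relation between two indices} one has $\iota_1(\beta_s)=\iCLM\big(\Delta,\Gr(\beta_s(t))\big)-n$, and Property~II applies because $\dim\big(\Gr(\beta_s(0))\cap\Delta\big)=2n$ and $\dim\big(\Gr(\beta_s(T))\cap\Delta\big)=\nu_1(\gamma(T))$ are constant in $s$; this also settles the point you flag about the left perturbation $e^{-J\varepsilon}$, since the homotopy-invariance statement absorbs the choice of $\varepsilon$. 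Your proof of (2) is likewise sound in outline: $J\diamond J=J$ makes $e^{-J\varepsilon}$ distribute over the $\diamond$-product, and $\det\big((N_1\diamond N_2)-\Id\big)=\det(N_1-\Id)\det(N_2-\Id)$ localizes crossings in the factors, with the transverse direction $NJ$ splitting as you say. The only genuine residual work --- which you correctly identify as bookkeeping --- is handling instants where both factors cross simultaneously, or where a crossing lands in a deeper stratum of $\Sp(2n,\R)^0$; one either perturbs slightly in time to separate and regularize the crossings, or, more cleanly, transfers again through Lemma~\ref{lem:relation between two indices} and uses the additivity of $\iCLM$ under direct sums, since the diagonal and the graph $\Gr(\gamma_1\diamond\gamma_2)$ split compatibly under $\diamond$ up to a coordinate permutation preserving co-orientations. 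In short: the citation buys the paper brevity, while your argument supplies a self-contained proof from the intersection-number definition that is consistent with how these facts are established in \cite{Lon02}.
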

The next result establish the precise relation intertwining between  the $\iCLM$ and the $\iota$ indices and we refer the reader to  \cite[Corollary 2.1]{LZ00b} for its proof. 
\begin{lem}\label{lem:relation between two indices}
Let $\gamma:[0,T]\to \Sp(2n,\R)$ be a symplectic path such that $\gamma(0)=I_{2n}$, then we have
\begin{equation}
\iota_1(\gamma(t), t \in [0,T])+n=\iCLM(\Delta,Gr(\gamma(t)), t\in [0,T]),
\end{equation}
where $\Delta$ is the diagonal $Gr(I_{2n})$.
\end{lem}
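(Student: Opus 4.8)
The plan is to realize both indices as signed intersection numbers along the \emph{same} set of crossing instants, match the local contributions, and then pin down the additive constant by a single base case. First I would pass to the graph: for $\gamma\in\mathcal P_T(2n)$ the path $t\mapsto \mathrm{Gr}(\gamma(t))$ is a path of Lagrangian subspaces of $(\R^{2n}\times\R^{2n},\Omega)$ with $\Omega=(-\omega)\oplus\omega$, and $\Delta=\mathrm{Gr}(I_{2n})$ is a fixed Lagrangian. Since $(x,x)\in \mathrm{Gr}(\gamma(t))\cap\Delta$ exactly when $x\in\ker(\gamma(t)-I_{2n})$, the crossings of $t\mapsto\mathrm{Gr}(\gamma(t))$ with the Maslov cycle $\Sigma(\Delta)$ occur precisely at the instants where $\gamma(t)\in\Sp(2n,\R)^{0}$. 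Thus both $\iCLM(\Delta,\mathrm{Gr}(\gamma))$ and $\iota_1(\gamma)$ are built from the same crossing instants, and the whole problem reduces to comparing local contributions and normalizations.

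Next I would match the interior contributions. Writing $\mathrm{Gr}(\gamma(t))=\Psi(t)\Delta$ with $\Psi(t)=I_{2n}\oplus\gamma(t)\in\Sp(\Omega)$, the Robbin--Salamon crossing form recalled in the Appendix applies to the induced Lagrangian path. A direct computation reduces this crossing form, restricted to $\mathrm{Gr}(\gamma(t_0))\cap\Delta\cong\ker(\gamma(t_0)-I_{2n})$, to the quadratic form $\xi\mapsto\omega\big(J\dot\gamma(t_0)\gamma(t_0)^{-1}\xi,\xi\big)$ (up to the overall sign fixed by conventions). This is exactly the form whose inertia detects the transverse coorientation of $\Sp(2n,\R)^{0}$, whose positive direction is $\tfrac{d}{dt}Me^{tJ}|_{t=0}$. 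Hence every interior regular crossing contributes the same signed integer to $\iCLM(\Delta,\mathrm{Gr}(\gamma))$ and to $\iota_1(\gamma)$.

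It then remains to fix the additive constant, which I would do structurally. Both functionals are homotopy invariant (for $\iCLM$ by Property~II, for $\iota_1$ by the $e^{-J\varepsilon}$-perturbation definition together with homotopy invariance of the intersection number) and additive under the $\diamond$-product: for $\iota_1$ this is Lemma~\ref{lem:property of maslov-type index}(2), and for $\iCLM$ it follows from the splitting $\mathrm{Gr}(\gamma_1\diamond\gamma_2)=\mathrm{Gr}(\gamma_1)\oplus\mathrm{Gr}(\gamma_2)$ and additivity of the Maslov index under symplectic direct sums. By the normal-form decomposition of Lemma~\ref{lem:homotopy component}, every path may be deformed, keeping the relevant intersection dimensions fixed, into a $\diamond$-product of one-dimensional normal-form blocks, so it suffices to verify the identity once and track only the normalization discrepancy at the initial point. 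The simplest anchor is the constant path $\gamma\equiv I_{2n}$: here $\mathrm{Gr}(\gamma)\equiv\Delta$ is a constant Lagrangian path, so $\iCLM(\Delta,\Delta)=0$, whereas the generalized Conley--Zehnder index of the constant identity path equals $-n$ (consistent with the value $\iota_1(\gamma_1)=-1$ used for $\gamma_1\equiv I_2$ in the proof of Theorem~\ref{thm:main1}). Since the interior contributions already agree, this base case forces $\iota_1(\gamma)+n=\iCLM(\Delta,\mathrm{Gr}(\gamma))$ in general.

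The hard part will be the endpoint bookkeeping that produces exactly the shift $n$. The two indices use genuinely different conventions at $t=0$, where $\gamma(0)=I_{2n}$ always yields a degenerate, full-dimensional crossing $\mathrm{Gr}(\gamma(0))=\Delta$: the Conley--Zehnder index desingularizes it by the rotation $e^{-J\varepsilon}$, while the Robbin--Salamon formula \eqref{eq:maslov index formula} weighs the endpoints by the half-signatures $m^+$ (left) and $-m^-$ (right). The delicate step is to check that the transverse coorientation of $\Sigma(\Delta)$ identified above is compatible with the perturbation direction used to define $\iota_1$, so that the two desingularizations differ by precisely $n$ rather than some other constant; the base-case computation is what makes this bookkeeping unambiguous.
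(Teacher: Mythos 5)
The paper does not actually prove this lemma: it is quoted from Long--Zhu, \cite[Corollary 2.1]{LZ00b}, so your attempt can only be measured against the standard argument in the literature. Your skeleton is the right one: the crossings of $t\mapsto \Gr(\gamma(t))$ with $\Sigma(\Delta)$ occur exactly when $\gamma(t)\in\Sp(2n,\R)^0$, the crossing form on $\Gr(\gamma(t_0))\cap\Delta\cong\ker(\gamma(t_0)-I_{2n})$ is the form $\langle -J\dot\gamma(t_0)\gamma(t_0)^{-1}\xi,\xi\rangle$ appearing in \eqref{eq:maslov index formula}, and the shift by $n$ originates at the fully degenerate initial crossing $t=0$, where the two definitions desingularize $\Delta$ differently. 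Your base-case check ($\iCLM(\Delta,\Delta)=0$ versus $\iota_1(I_{2n})=-n$) is also correct and consistent with the paper's use of $\iota_1(\gamma_1)=-1$ for the constant path in $\Sp(2,\R)$.

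The genuine gap is in how you fix the ``additive constant''. Anchoring at the single path $\gamma\equiv I_{2n}$ determines a constant only after you have shown that the two indices assign \emph{identical} contributions at every other crossing, including a possibly degenerate final endpoint $t=T$; the constant path, whose crossing form vanishes identically, cannot detect a mismatch there. Concretely, $\iCLM$ weighs the right endpoint by $-m^-$ of the crossing form in \eqref{eq:maslov index formula}, whereas $\iota_1$ is defined through the left perturbation $e^{-J\varepsilon}\gamma$; that these agree for every value of $\dim\ker(\gamma(T)-I_{2n})$ and every inertia of the endpoint crossing form is essentially the content of the lemma and must be verified (for instance by testing $R(t)$ on $[0,2\pi]$, whose endpoint is again fully degenerate but with a definite crossing form, or by a concatenation argument reducing to nondegenerate endpoints). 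Relatedly, you leave the identification of the two local contributions ``up to the overall sign fixed by conventions'' --- but that sign is precisely what the comparison must pin down, and a base case with no interior crossings cannot fix it. Finally, Lemma~\ref{lem:homotopy component} only normalizes the endpoint $\gamma(T)$ inside $\Omega^0(\gamma(T))$, not the path itself: two paths with the same endpoints differ by a loop, so you must also observe that loops contribute equally to both indices before concluding that their difference is constant on each endpoint class.
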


\begin{lem}\label{lem:index of three special paths}

Let $\eta_1, \eta_2$ adn $\eta_3$ be the symplectic path pointwise defined by 
\[
\eta_1(t)=\begin{bmatrix}
1&t\\0&1
\end{bmatrix}\qquad \eta_2(t)=\begin{bmatrix}
1&-t\\0&1
\end{bmatrix}\quad 
\textrm{ and } \quad \eta_3(t)=\begin{bmatrix}
\cos t&-\sin t\\\sin t&\cos t
\end{bmatrix} \textrm{ for } t\in[0,T].
\]
Then we have
\begin{align}
& \iota_1(\eta_1)=-1 && \iota_1(\eta_2)=0\\
&
\iota_1(\eta_3)=
\begin{cases}
	1 &  \textrm{ if } \quad T\in(0,2\pi] \\
	3 &  \textrm{ if }\quad T\in(2\pi,4\pi]
\end{cases}
\end{align}
\end{lem}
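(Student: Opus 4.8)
The plan is to reduce every index to a Maslov index of a graph path and then read it off from crossing forms. By Lemma~\ref{lem:relation between two indices} (with $n=1$) one has $\iota_1(\eta_i)=\iCLM(\Delta,\Gr(\eta_i(t)))-1$, where $\Delta=\Gr(I_2)$ and $\Gr(\eta_i(t))$ is regarded as a smooth Lagrangian path in $(\R^4,(-\omega)\oplus\omega)$, obtained by letting $\Gamma(t)=I_2\oplus\eta_i(t)\in\Sp(4,\R)$ act on $\Delta$. A crossing instant is exactly a $t_0$ with $\ker(\eta_i(t_0)-I_2)\neq 0$, and specializing the crossing-form formula recalled just before \eqref{eq:maslov index formula} to $\Gamma(t)$ yields, on $\ker(\eta_i(t_0)-I_2)$, the quadratic form $Q_{t_0}(v)=\langle -J\dot\eta_i(t_0)\eta_i(t_0)^{-1}v,v\rangle=\omega(v,\dot\eta_i(t_0)v)$. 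I would set up exactly this dictionary first.

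Next I would dispatch the rotation path $\eta_3(t)=e^{tJ}$, which is the regular case. Here $\det(\eta_3(t)-I_2)=2(1-\cos t)$ vanishes only at $t\in\{0,2\pi,4\pi\}$, and at each such $t_0$ one has $\eta_3(t_0)=I_2$ and $\dot\eta_3(t_0)=J$, so the crossing is two-dimensional with $Q_{t_0}(v)=\omega(v,Jv)=|v|^2$ positive definite; thus every crossing contributes $m^+=2$, $m^-=0$. Plugging into \eqref{eq:maslov index formula} and splitting cases, for $T\in(0,2\pi]$ only the initial crossing at $t=0$ lies in $[0,T]$ (the possible endpoint crossing at $t=2\pi$ contributes $-m^-=0$), giving $\iCLM=2$; for $T\in(2\pi,4\pi]$ one additionally picks up the interior crossing at $t=2\pi$ with signature $+2$ (and any endpoint crossing at $t=4\pi$ again contributes $0$), giving $\iCLM=4$. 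Subtracting $1$ yields $\iota_1(\eta_3)=1$, respectively $3$.

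The parabolic paths $\eta_1,\eta_2$ are the genuinely degenerate case: since $\det(\eta_i(t)-I_2)\equiv 0$, the whole path lies in the singular cycle $\Sp(2,\R)^0$, so crossings occur at every $t$ and \eqref{eq:maslov index formula} does not apply verbatim. I would show the index localizes at $t=0$. For $t>0$ the kernel is spanned by $\trasp{(1,0)}$, and since $\dot\eta_i(t)\,\trasp{(1,0)}=0$ one gets $Q_t\equiv 0$ there, so the path merely slides inside the codimension-one stratum $\Omega^1(\Delta)$ without a transverse crossing. At $t=0$ the kernel is all of $\R^2$ and the rank-one form is $Q_0(v)=\omega(v,\dot\eta_i(0)v)$, equal to $-x_2^2$ for $\eta_1$ and to $+x_2^2$ for $\eta_2$; reading off $m^+(Q_0)=0$ respectively $1$, and using that $t=0$ is the only contributing crossing, gives $\iCLM=0$ respectively $1$, hence $\iota_1(\eta_1)=-1$ and $\iota_1(\eta_2)=0$.

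The main obstacle is exactly this last justification: because the parabolic crossings are non-regular (indeed non-isolated), one cannot simply invoke \eqref{eq:maslov index formula}, and one must argue that the stratum-sliding part contributes nothing while the initial rank-one form contributes its positive index $m^+(Q_0)$. I would make this rigorous by returning to the definition of the index via the $e^{-\varepsilon J}$ perturbation (Definition~\ref{def:Maslov-type index}, or its Lagrangian analogue for $\iCLM$), for which a direct computation gives $\det(e^{-J\varepsilon}\eta_1(t)-I_2)=2(1-\cos\varepsilon)+t\sin\varepsilon$ and $\det(e^{-J\varepsilon}\eta_2(t)-I_2)=2(1-\cos\varepsilon)-t\sin\varepsilon$; the delicate point is then the careful bookkeeping of the transverse co-orientation of $\Sp(2,\R)^0$, determined by the positive direction $\frac{d}{ds}Me^{sJ}|_{s=0}$, that turns these one-sided sign patterns into the values $-1$ and $0$. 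As an independent cross-check I would compare against the known indices of the normal forms $N_1(1,1)$ and $N_1(1,-1)$ of \eqref{eq:basic normal forms} recorded in \cite{Lon02}.
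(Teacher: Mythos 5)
Your reduction to $\iCLM(\Delta,\Gr(\eta_i))$ via Lemma~\ref{lem:relation between two indices} and your treatment of $\eta_3$ are correct and complete: the crossings at $t=0,2\pi,4\pi$ are regular with positive definite crossing form $\omega(v,Jv)=\vert v\vert^2$, and \eqref{eq:maslov index formula} gives $\iCLM=2$ resp.\ $4$, hence $\iota_1=1$ resp.\ $3$. Note that the paper itself proves none of this; its proof of the lemma is a two-line pointer to Lemma~\ref{lem:relation between two indices} and to the computations in \cite{KOP21}, so for $\eta_3$ you are supplying an argument the paper outsources.

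The genuine gap is in $\eta_1,\eta_2$, and it is more than the ``delicate bookkeeping'' you flag. Your own computation $\det\big(e^{-J\varepsilon}\eta_1(t)-I_2\big)=2(1-\cos\varepsilon)+t\sin\varepsilon>0$ shows the perturbed path is \emph{disjoint} from $\Sp(2,\R)^0$; a path with no crossings has intersection number $0$ however the cycle is co-oriented, so this route cannot output $-1$. (Similarly $e^{-J\varepsilon}\eta_2$ has exactly one transverse crossing, contributing $\pm1$, not $0$.) The missing $-1$ is a normalization effect at the initial point: every $\psi\in\MP_T(2)$ starts at $I$, which is the singular vertex of the cone $\Sp(2,\R)^0=\{\operatorname{tr}M=2\}$, and $\iota_1$ is normalized so that, e.g., the constant path has index $-n$ (the paper itself uses $\iota_1(I)=-1$ in the trivial-Jordan-block cases); this shift is exactly the $n$ in Lemma~\ref{lem:relation between two indices} and is invisible to a naive signed count of transverse crossings of $e^{-J\varepsilon}\psi$. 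Your ``localization at $t=0$ contributing $m^+(Q_0)$'' produces the right numbers but is asserted rather than proved, since \eqref{eq:maslov index formula} is only available for regular crossings and here the crossings are neither regular nor isolated. A clean repair inside the paper's toolkit: conjugate by $J$ (Lemma~\ref{lem:property of maslov-type index}) to replace $\eta_1,\eta_2$ by the lower block-triangular paths $t\mapsto\begin{bmatrix}1&0\\ \mp t&1\end{bmatrix}$ and apply Zhu's formula \eqref{eq:zhu's formula to compute maslov index} with $V=\Gr(I_1)$, which gives $\iCLM=m^+(\mp T)=0$ resp.\ $1$, hence $\iota_1(\eta_1)=-1$ and $\iota_1(\eta_2)=0$; alternatively, perturb the generators to $\begin{bmatrix}0&1\\ \mp s&0\end{bmatrix}$ to get nearby nondegenerate elliptic/hyperbolic paths computable by \eqref{eq:maslov index formula} and invoke the characterization of $\iota_1$ of a degenerate path as the infimum over nearby nondegenerate ones.
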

\begin{proof}
The proof of this result, follows by Lemma \ref{lem:relation between two indices} 
and by the computation provides in \cite{KOP21}.  
\end{proof}

%%%%%%%%%%%%%%%%%%%%%%%%%%%%%%%%%%%%%%%%%%%%%%%%%%%%%%%%%%%%%%%%%%%%
%%
%%
%%
%%
%%
%%
%%%%%%%%%%%%%%%%%%%%%%%%%%%%%%%%%%%%%%%%%%%%%%%%%%%%%%%%%%%%%%%%%%%%

\subsection{An index theorem}

Let $T>0$ and we let $L\in \mathscr C^2(\R/T\ZZ\times \R^{2n}\R)$ be a Lagrangian function and we assume that it satisfies the  Legendre convexity condition
\[
L_{vv}(t,x,v)>0 \qquad \forall\, (t,x,v) \in [0, T ] \times T\R^n.
\]
On the Hilbert manifold $H=\{u\in  W^{1,2}(\R/T\ZZ,\R^n)\ | u(0)=u(T)\}$, we consider the Lagrangian action functional 
\begin{equation}\label{eq:abstract lagrangian system}
\mathcal{A}(x)=\int_{0}^{T}L(t,x,\dot x) dt.
\end{equation}
Then $x$ is a critical point of action functional \eqref{eq:abstract lagrangian system} if and only if $x$ satisfies following Euler-Lagrangian equation
\begin{equation}
\dfrac{d}{dt} L_v(t,x,\dot x)=L_x(t,x,\dot x)
\end{equation}
with boundary condition $x(0)=x(T)$ and $ \dot x(0)=\dot x(T)$. We conside the {\em index form} arising by the second variation of the action functional 
\begin{equation}
I(u,v)=\int_0^T \langle P\dot u,\dot v\rangle+\langle Qu,\dot v\rangle+\langle \trasp{Q}\dot u, v\rangle+\langle Ru, v\rangle dt
\end{equation}
where $u,v\in H$, $P(t)=L_{vv}(t,x,\dot x), Q(t)=L_{vx}(t,x,\dot x), R(t)=L_{xx}(t,x,\dot x)$ and we define  the {\em Morse index of the critical point $x$} as the dimension of the maximal subspace of $H$ where  $I$ is negative  definite. We denote Morse index by $m^-(x)$.

Integrating by parts in the index form, we get that  $x\in\ker I$ if and only if $x$ is a solution of following Sturm-Liouville boundary value problem
\begin{equation}\label{eq:S-L system}
\begin{cases}
-\dfrac{d}{dt}(p\dot x+Qx)+\trasp{Q}\dot x+Rx=0\qquad \textrm{ on } \qquad [0,T]\\[7pt]
x(0)=x(T) \qquad  x(0)=\dot x(T).
\end{cases}
\end{equation}
By setting  $y=P\dot x+Qx$ and $z=\trasp{(y,x)}$ then the equation defined at \eqref{eq:S-L system} corresponds to the Hamiltonian system
\begin{equation}\label{eq:Hamiltonian system}
\dot z(t)=JB(t)z(t) \textrm{ where }
B(t):=\begin{bmatrix} P^{-1}(t) &- P^{-1}(t)Q(t)\\-\trasp{Q}(t)P^{-1}(t)&\trasp{Q}(t)P^{-1}(t)Q(t)-R(t) \end{bmatrix}.
\end{equation} 
Denoting by $\gamma$ be the fundamental solution of the Hamiltonian system defined at Equation~\eqref{eq:Hamiltonian system}, then the following index theorem holds. (Cfr. \cite{Vit88, LA97}).
\begin{thm}\label{lem:relation between morse and maslov-type}
	Under the above notations, there following result holds:
	\begin{equation}
	m^-(x)=\iota_1(\gamma).
	\end{equation}
\end{thm}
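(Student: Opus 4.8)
The plan is to realize both sides of the identity as two descriptions of the same \emph{spectral flow} and to bridge them through the Maslov index $\iCLM$, following the scheme of \cite{Vit88, LA97}. By Lemma~\ref{lem:relation between two indices} applied to $\gamma=\gamma_0$ it suffices to prove $m^-(x)=\iCLM(\Delta,\Gr(\gamma(t)),\,t\in[0,T])-n$, so the whole problem becomes the computation of the Maslov index of the Lagrangian path $t\mapsto\Gr(\gamma(t))$ relative to the diagonal $\Delta=\Gr(I_{2n})$. On the analytic side, the Legendre condition $P>0$ guarantees that the self-adjoint realization $\mathcal L$ of the index form $I$ on $H$ is bounded below with discrete spectrum accumulating only at $+\infty$; hence $m^-(x)$ equals the number of negative eigenvalues of $\mathcal L$, counted with multiplicity.

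To turn this eigenvalue count into an intersection number I would introduce the spectral parameter and study the family $I_\lambda(u,v):=I(u,v)-\lambda\int_0^T\langle u,v\rangle\,dt$. Integrating by parts exactly as in \eqref{eq:S-L system}, the kernel of $I_\lambda$ corresponds to $T$-periodic solutions of the Hamiltonian system $\dot z=JB_\lambda(t)z$ with $B_\lambda=B+\lambda\left[\begin{smallmatrix}0&0\\0&I_n\end{smallmatrix}\right]$; writing $\gamma_\lambda$ for its fundamental solution, $\lambda$ is an eigenvalue of $\mathcal L$ exactly when $\Gr(\gamma_\lambda(T))\cap\Delta\neq\{0\}$, with multiplicity $\dim\big(\Gr(\gamma_\lambda(T))\cap\Delta\big)$. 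A coercivity estimate using $P>0$ shows that for $\lambda=-\Lambda$ with $\Lambda$ large the form $I_{-\Lambda}$ is positive definite, so $\mathcal L$ has no eigenvalue below $-\Lambda$ and the corresponding system is hyperbolic. Consequently $m^-(x)$ equals the number of crossings of the path $\lambda\mapsto\Gr(\gamma_\lambda(T))$ with $\Delta$ over $\lambda\in[-\Lambda,0]$.

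The central step is to see that this crossing count is a genuine, monotone Maslov contribution. Here I would compute the crossing form for the $\lambda$-variation via the recipe preceding \eqref{eq:maslov index formula}: because $\partial_\lambda B_\lambda=\left[\begin{smallmatrix}0&0\\0&I_n\end{smallmatrix}\right]\ge 0$ and, by unique continuation for the Hamiltonian ODE, the configuration component of a nontrivial periodic solution cannot vanish identically, the integrated crossing form is positive definite at every crossing. Thus all crossings are transverse and of one sign, so the count equals the spectral flow and may be identified with $\iCLM(\Delta,\Gr(\gamma_\lambda(T)),\,\lambda\in[-\Lambda,0])$. To relate this $\lambda$-Maslov index to the $t$-Maslov index I would run the standard two-parameter homotopy argument on the rectangle $[-\Lambda,0]\times[0,T]$: applying Properties I and II to $(\lambda,t)\mapsto\Gr(\gamma_\lambda(t))$, the alternating sum of the Maslov indices of the four sides vanishes. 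The side $t=0$ is constant ($\gamma_\lambda(0)=I_{2n}$, so $\Gr=\Delta$) and contributes nothing, the side $t=T$ is the spectral-flow term $m^-(x)$, the side $\lambda=0$ is the desired $\iCLM(\Delta,\Gr(\gamma(t)))$, and the side $\lambda=-\Lambda$ carries the hyperbolic reference path. This yields $m^-(x)=\iCLM(\Delta,\Gr(\gamma(t)))-D$ with $D=\iCLM(\Delta,\Gr(\gamma_{-\Lambda}(t)),\,t\in[0,T])$.

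The main obstacle I anticipate is the bookkeeping that pins $D$ to the value $n$ and so produces the additive constant of the theorem. The path $\gamma_{-\Lambda}$ starts at $I_{2n}$ with a full $n$-dimensional crossing at $t=0$ and, being hyperbolic, avoids the Maslov cycle for $t\in(0,T]$; applying Lemma~\ref{lem:relation between two indices} to $\gamma_{-\Lambda}$ and using $\iota_1(\gamma_{-\Lambda})=0$ for such a positively hyperbolic normalization gives $D=n$, which is precisely the offset matching the $-n$ above. Verifying the positive definiteness of the $\lambda$-crossing forms through unique continuation, the coercivity of $I_{-\Lambda}$, and the correct co-index convention $m^+$ at the initial crossing in \eqref{eq:maslov index formula} are the delicate inputs where $P>0$ is essential; once these sign and endpoint conventions are reconciled, combining $m^-(x)=\iCLM(\Delta,\Gr(\gamma(t)))-n$ with Lemma~\ref{lem:relation between two indices} delivers $m^-(x)=\iota_1(\gamma)$.
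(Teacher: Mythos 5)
The paper never proves this theorem internally: it is imported verbatim from the references, with the attribution ``(Cfr.\ \cite{Vit88, LA97})'' placed just before the statement. So there is no in-paper proof to compare against, and your spectral-flow reconstruction is exactly the standard argument of that literature. Most of it checks out: the penalization $B_\lambda=B+\lambda\,\mathrm{diag}(0,I_n)$ is the correct Hamiltonian counterpart of $I_\lambda$ (the kernel equation replaces $R$ by $R-\lambda I_n$); the multiplicity identification $\dim\ker(\mathcal L-\lambda)=\dim\big(\Gr(\gamma_\lambda(T))\cap\Delta\big)$ is right; the $\lambda$-crossing form is $\Gamma(v)=\int_0^T\langle \partial_\lambda B_\lambda\, w(s),w(s)\rangle\,ds=\int_0^T|x(s)|^2\,ds$ for $w(s)=\gamma_\lambda(s)v=\trasp{(y,x)}$, and its strict positivity needs no unique continuation theorem: if $x\equiv0$ then the first Hamilton equation $\dot x=P^{-1}y-P^{-1}Qx$ forces $y\equiv0$, so the form is positive definite, all $\lambda$-crossings are regular with full signature, and the endpoint convention of \eqref{eq:maslov index formula} makes a possibly degenerate crossing at $\lambda=0$ contribute $-m^-(\Gamma)=0$, so the theorem survives in the degenerate case. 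The homotopy-square bookkeeping on $[-\Lambda,0]\times[0,T]$ with the constant side $t=0$ is also fine.

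The one genuine gap is your evaluation of $D=\iCLM(\Delta,\Gr(\gamma_{-\Lambda}(t)),t\in[0,T])$: invoking ``$\iota_1(\gamma_{-\Lambda})=0$ for such a positively hyperbolic normalization'' is circular, because that identity is precisely the theorem under proof applied to the path $\gamma_{-\Lambda}$, whose index form $I_{-\Lambda}$ is positive definite (hence $m^-=0$). The fix stays inside your own toolkit: compute $D$ directly from \eqref{eq:maslov index formula}. The only crossing of $t\mapsto\Gr(\gamma_{-\Lambda}(t))$ with $\Delta$ is at $t=0$, where the crossing form on $\ker\big(\gamma_{-\Lambda}(0)-I_{2n}\big)=\R^{2n}$ is $v\mapsto\langle B_{-\Lambda}(0)v,v\rangle$; for $\Lambda$ large its block structure (top block $P^{-1}>0$, bottom block $\trasp{Q}P^{-1}Q-R-\Lambda I_n<0$) gives $m^+=n$, whence $D=n$. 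Crossings at $t_0\in(0,T]$ are excluded because each would yield a $t_0$-periodic solution, i.e.\ a kernel element of $I_{-\Lambda}$ on loops of period $t_0$, and your coercivity estimate $I_{-\Lambda}(u,u)\ge \tfrac{c}{2}\Vert\dot u\Vert^2+(\Lambda-C)\Vert u\Vert^2$ holds uniformly on subintervals. Note also that with this subinterval argument you do not need hyperbolicity of $\gamma_{-\Lambda}(T)$ at all (which, as claimed from real coercivity alone, would anyway require the $\omega$-twisted boundary-condition version of the estimate to rule out eigenvalues on the whole unit circle); nondegeneracy of the $T$-periodic problem at $\lambda=-\Lambda$ suffices. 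With $D=n$ so established, your chain $m^-(x)=\iCLM(\Delta,\Gr(\gamma))-n=\iota_1(\gamma)$ via Lemma~\ref{lem:relation between two indices} closes correctly.
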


% % % % % % % % % % % % % % % % % % % % % % % % % % % % % % % % % % %
% % %
% % %
% % % % % % % % % % % % % % % % % % % % % % % % % % % % % % % % % % %

% % % % % % % % % % % % % % % % % % % % % % % % % % % % % % % % % % %
% % %
% % %
% % % % % % % % % % % % % % % % % % % % % % % % % % % % % % % % % % %

\appendix

% % % % % ================================================================

\vspace{1cm}
\noindent
\textsc{Dr. Stefano Baranzini}\\
Dipartimento di Matematica “Giuseppe Peano”\\
Università degli Studi di Torino\\
Via Carlo Alberto, 10 \\
10123, Torino\\
Italy\\
E-mail: \email{stefano.baranzini@unito.it}

\vspace{1cm}
\noindent
\textsc{Prof. Alessandro Portaluri}\\
Department of Agriculture, Forest and Food Sciences\\
Università degli Studi di Torino\\
Largo Paolo Braccini 2 \\
10095 Grugliasco, Torino\\
Italy\\
Website: \url{aportaluri.wordpress.com}\\
E-mail: \email{alessandro.portaluri@unito.it}

\vspace{1cm}
\noindent
\textsc{Dr. Ran Yang}\\
School of Science\\
East China University of Technology\\
Nanchang, Jiangxi, 330013\\
The People's Republic of China\\
China\\
E-mail: \email{201960124@ecut.edu.cn}

\end{document}